\newtheorem{theorem}{Theorem}[section]
\newtheorem{lemma}[theorem]{Lemma}
\newtheorem{proposition}[theorem]{Proposition}
\newtheorem{corollary}[theorem]{Corollary}
\newtheorem{definition}{Definition}[section]
\theoremstyle{remark}
\newtheorem{remark}[theorem]{Remark}
\theoremstyle{definition}
\newtheorem{example}{Example}[section]
\numberwithin{equation}{section}
\newcommand{\R}{\ensuremath{\mathbb{R}}}
\newcommand{\N}{\ensuremath{\mathbb{N}}}
\newcommand{\Z}{\ensuremath{\mathbb{Z}}}
\newcommand{\J}{\ensuremath{\mathbb{J}}}
\newcommand{\Levy}{\ensuremath{\mathcal{L}}}
\newcommand{\Operator}{\ensuremath{\mathfrak{L}^{\sigma,\mu}}}
\newcommand{\Hvar}{\ensuremath{\vartheta}}  
\newcommand{\Levymu}{\ensuremath{\mathcal{L}^\mu}}
\newcommand{\veps}{\varepsilon}
\newcommand{\Ti}{\ensuremath{T^{\textup{imp}}}}
\newcommand{\dd}{\,\mathrm{d}}
\newcommand{\dell}{\partial}
\newcommand{\indikator}{\mathbf{1}_{|z|\leq 1}}
\newcommand{\indik}{\mathbf{1}}
\DeclareMathOperator{\sgn}{\textup{sign}}
\newcommand{\Ux}{\overline{U}}
\newcommand{\Uxt}{\widetilde{\overline{U}}}
\newcommand{\Grid}{\mathcal{G}_h}
\newcommand{\GridT}{\mathcal{T}_{\Delta t}^T}
\newcommand{\RN}{\mathbb{R}^N}
\begin{document}

\title[Uniform tail estimates]{Uniform tail estimates and $L^p(\R^N)$-convergence for finite-difference approximations of nonlinear diffusion equations}

\author[F.~del~Teso]{F\'elix del Teso}
\address[F. del Teso]{Departamento de Matem\'aticas\\
Universidad Aut\'onoma de Madrid (UAM)\\
Campus de Cantoblanco, 28049 Madrid, Spain}
\email[]{felix.delteso\@@{}uam.es}
\urladdr{https://sites.google.com/view/felixdelteso/}

\author[J.~Endal]{J\o rgen Endal}
\address[J. Endal]{Departamento de Matem\'aticas\\
Universidad Aut\'onoma de Madrid (UAM)\\
Campus de Cantoblanco, 28049 Madrid, Spain}
\email[]{jorgen.endal\@@{}uam.es}
\urladdr{http://verso.mat.uam.es/~jorgen.endal/}

\author[E.~R.~Jakobsen]{Espen R. Jakobsen}
\address[E. R. Jakobsen]{Department of Mathematical Sciences\\
Norwegian University of Science and Technology (NTNU)\\
N-7491 Trondheim, Norway} 
\email[]{espen.jakobsen\@@{}ntnu.no}
\urladdr{http://www.math.ntnu.no/\~{}erj/}

\keywords{Numerical schemes, Arzel\`a-Ascoli, Kolmogorov-Riesz, convergence, compactness, very weak solutions, entropy solutions, convection-diffusion, porous medium, fast diffusion, Stefan problem, Laplacian, fractional Laplacian, nonlocal operators, existence}

\subjclass[2010]{
35K15, 
35K65, 
35D30, 
35R09, 
35R11, 
65M06, 
65M12, 
76S05
}



\begin{abstract}
We obtain new equitightness and $C([0,T];L^p(\R^N))$-convergence results for finite-difference approximations of generalized porous medium equations of the form
$$
\partial_tu-\mathfrak{L}[\varphi(u)]=g\qquad\text{in $\R^N\times(0,T)$},
$$
where $\varphi:\R\to\R$ is continuous and nondecreasing, and $\mathfrak{L}$ is a local or
nonlocal diffusion operator. Our results include
slow diffusions, strongly degenerate Stefan problems, and fast diffusions above a critical exponent.  These results improve the previous $C([0,T];L_\textup{loc}^p(\R^N))$-convergence obtained in a series of papers on the topic by the authors. To have
equitightness and global $L^p(\R^N)$-convergence, some  additional restrictions
on $\mathfrak{L}$ and $\varphi$ are needed. Most commonly used
symmetric operators $\mathfrak{L}$ are still included: the Laplacian,
fractional Laplacians, and other generators of symmetric L\'evy processes
with some fractional moment. We also discuss extensions to nonlinear
 possibly strongly degenerate
convection-diffusion equations.
\end{abstract}

\maketitle

\begin{center}
  \emph{Dedicated to Juan Luis V\'azquez\\---with deep admiration---\\ on the occasion of his 75th birthday.}
\end{center}

\tableofcontents

\section{Introduction}
In this paper we study finite-difference approximations of
degenerate nonlinear diffusion and convection-diffusion equations. We
work in a standard low regularity setting with bounded and integrable data and
(generalized) solutions in $C([0,T];L^p(\R^N))$. Even though the
natural mode of convergence here is in the
  space where solutions live, existing results  for such schemes give
convergence in $C([0,T];L_\textup{loc}^p(\R^N))$ or
$L_\textup{loc}^p(\R^N\times(0,T))$. 
  The purpose of this paper is to show in a systematic and general way
  that with some additional effort
  we will have the stronger convergence in
  $C([0,T];L^p(\R^N))$ under mild or sometimes even no additional assumptions.
  
To be more precise we study generalized porous medium equations (and
extensions to convection-diffusion equations) in the context of bounded and integrable solutions. In detail, we study
\begin{equation}\label{GPME}\tag{\textup{GPME}}
\begin{cases}
\dell_tu-\mathfrak{L}[\varphi(u)]=g \qquad\qquad&\text{in $Q_T:=\R^N\times(0,T)$},\\
u(\cdot,0)=u_0 \qquad\qquad&\text{on $\R^N$},
\end{cases}
\end{equation}
where $u=u(x,t)$  is the solution, $\varphi:\R\to\R$ is nondecreasing and $\gamma$-H\"older continuous with $\gamma\in(0,1]$, $g=g(x,t)$ some bounded and integrable right-hand side, $u_0\in L^1(\R^N)\cap L^\infty(\R^N)$, and $T>0$. The general operator $\mathfrak{L}$ is given~as 
\begin{equation}\label{defbothOp}
\mathfrak{L}=\Operator:=L^\sigma+\Levy^\mu
\end{equation} 
where $L^\sigma:C_{\textup{c}}^\infty(\R^N)\to L^1(\R^N)$ is a possibly degenerate local diffusion operator
\begin{equation}\label{defLocalOp}
L^\sigma[\psi](x):=\textup{tr}\big(\sigma\sigma^TD^2\psi(x)\big)
\end{equation}
where $\sigma=(\sigma_1,\ldots,\sigma_P)\in \R^{N\times P}$, $P\in\N$, and $\sigma_i\in\R^N$, and the anomalous or nonlocal diffusion operator
$\Levymu$ is defined for any $\psi \in  
C_\textup{c}^\infty(\R^N)$ as 
\begin{equation}\label{deflevy}
\Levy ^\mu [\psi](x)=\int_{\R^N\setminus\{0\} } \big(\psi(x+z)-\psi(x)-z\cdot D\psi(x)\indikator\big) \dd\mu(z),
\end{equation}
where $D$ is the gradient, $\indikator$ a characteristic function, and
$\mu$ a nonnegative and symmetric Radon measure satisfying at least
the usual L\'evy measure condition (see Section
\ref{sec:MainResults}). In this paper, to simplify, we will always choose either
$\sigma\equiv0$ or $\sigma\sigma^T=I_N$. That is, 
the local operator is given as $L^\sigma=c \Delta:=c\sum_{i=1}^N\dell_{x_ix_i}^2$ where
$c\in\{0,1\}$.

\begin{remark}\label{deflevyexplained}
\begin{enumerate}[{\rm (a)}]
\item Since $\mu$ will be symmetric, $\lim_{r\to0^+}\int_{r<|z|\leq 1}z\dd\mu(z)=0$,
and we have an equivalent definition of $\Levy^\mu$ in \eqref{deflevy}
in terms of a principal value integral:
$$
\Levy^\mu[\psi](x)=\lim_{r\to0^+}\int_{|z|>r}\big(\psi(x+z)-\psi(x)\big)\dd\mu(z)=\textup{P.V.}\int_{|z|>0}\big(\psi(x+z)-\psi(x)\big)\dd\mu(z).
$$
\item \eqref{GPME} is a pure diffusion equation. Our results also
  holds for convection-diffusion equations and viscous conservation
  laws, see Section \ref{sec:ResultsForCDE}.
\end{enumerate}
\end{remark}

Equations of the form \eqref{GPME} (and also variants with convection) appear in numerous applications. We selectively mention reservoir simulation, sedimentation processes, and traffic flow in the local case \cite{EsKa00, BuCoBuTo99, Whi74}; cardiac electrophysiology and semiconductor growth in the nonlocal case \cite{B-OKaGrRoBu14, Woy01}; and flows of fluids through porous media, mathematical finance, information theory, and phase transitions in both cases \cite{Vaz07, DPQuRoVa12, CoTa04, ShWe49, BrChQu12}. Important examples are strongly degenerate Stefan problems (cf. \cite{DTEnVa20,DTEnVa21}) with $\varphi(u)=\max\{0,u-L\}$, $L>0$, and the full range of porous medium and fast diffusion equations (cf. \cite{DPQuRoVa12}) with $\varphi(u)=u|u|^{m-1}$, $m>0$. The class of operators  $\Operator$ coincides with the generators of symmetric L\'evy processes  \cite{Ber96,Sch03,App09}. This includes e.g. the Laplacian $\Delta$, fractional Laplacians $-(-\Delta)^{\frac{\alpha}{2}}$, $\alpha\in(0,2)$, relativistic Schr\"odinger type operators $m^\alpha I-(m^2I-\Delta)^{\frac{\alpha}{2}}$, $\alpha\in(0,2)$ and $m>0$,  tempered stable processes \cite{CoTa04}, and even discretizations of all of these. Since $\sigma$ and $\mu$ may be degenerate or even identically zero, equation \eqref{GPME} can be purely nonlocal, purely local, or a combination. 

Our convergence results follow from compactness, stability, and
(very weak) uniqueness arguments. The compactness results depend on the type of equation under study, and
the properties available for such an equation. They are essential in
the context of existence, continuous dependence, and asymptotic
behaviour. For the latter example, this is particularly the case when
considering the rescaled solution in the ``four-step method''
introduced by Kamin and V\'azquez in \cite{KaVa88}. In all of these
cases, an approximate solution $u_n$ of the equation under study is
considered. Then one shows compactness of the family formed by $u_n$
in order to be able to find a limit function. The limit must of course
be a solution of the original equation (or variants of it). In this
paper, the approximate solution will always come from a
finite-difference scheme for \eqref{GPME}.

To prove compactness in $C([0,T];L^p(\R^N))$ with $p\in[1,\infty)$, we
  employ the well-known Arzel\`a-Ascoli and Kolmogorov-Riesz
  compactness theorems (cf. Appendix \ref{sec:AAKR}). A systematic
  approach to these theorems are presented in Section
  \ref{sec:abstractCompConv}. Compared to some previous results in
  this direction (see e.g. Lemma 2.2 in \cite{KaRi01} and Theorem A.8
  in \cite{HoRi02}), we also deduce equitightness (uniform
  tail control) for approximate solutions of \eqref{GPME}. This allows us to (i) go from $L^p_{\textup{loc}}$ 
  to $L^p$ compactness and (ii) to get a minimal and efficient
  compactness argument in the sense that we avoid a lot of previously used tricks to fulfil the requirements of Arzel\`a-Ascoli and Kolmogorov-Riesz. In our generality uniform boundedness ($L^\infty$-stability) of
  the solutions are   still required  to  ensure 
  some estimates hold. In special cases however, this assumption probably
  can be dropped.

As far as we know, equitightness results for \eqref{GPME} have not been presented under our general assumptions before (especially in the nonlocal case), and these results are really the core of this paper. Such estimates are deduced by taking, roughly speaking, $x\mapsto\mathbf{1}_{|x|>R}$ as a test function in the very weak formulation of \eqref{GPME}. This gives uniform tail control of the approximative solution (cf. e.g. \cite[Proof of Proposition 10.2]{DPQuRoVa12}). In examples when \eqref{GPME} conserves mass, we can summarize our equitightness results by saying that such estimates always holds in the local case, and also in the nonlocal setting when we assume that the nonlocal operator is comparable to a  fractional order operator at infinity.

Finite-difference methods were developed in the full generality of \eqref{GPME} in \cite{DTEnJa18b, DTEnJa19}, where also relevant numerical simulations can be found. Some early works are \cite{Dro10, DrJa14}. We also refer to \cite{DrEy16} (see also Part II of \cite{DrEyGaGuHe18}) in the purely local case. In the case of \eqref{GPME} with an additional convection term, we mention e.g. \cite{KaRi01, HoRi02, CiJa14}.

The numerical schemes which will be presented below include most of
the mentioned works on local and nonlocal cases. However, none of the
above showed convergence in $C([0,T];L^p(\R^N))$ (but some could still
deduce that the limit itself belonged to that space, see also Section
\ref{sec:ImprovedLimitRegularityLip}). Our equitightness estimates
therefore improves convergence results already present in the
literature. Conservative and monotone finite-difference schemes for
scalar conservation laws are discussed e.g. in \cite[Theorem
  3.8]{HoRi02}, and they immediately fall into our
$C([0,T];L^p(\R^N))$-framework. By 
adding a possibly nonlinear local diffusion term to such equations, we
obtain convection-diffusion equations with local diffusion. Such
equations have been studied in the context of finite-difference
approximations in \cite[Theorem 4.2]{KaRi01}. See also Theorem 3.9 and
Corollary 3.10 in \cite{EvKa00}. Again, we can improve the convergence
from the respective $L_\textup{loc}^1(Q_T)$ and
$C([0,T];L_\textup{loc}^1(\R^N))$ to $C([0,T];L^1(\R^N))$ since
$L^1(\R^N)$-equitightness holds. In the nonlocal
diffusion setting, finite-difference schemes has just recently been
analyzed in rigorous detail and generality in \cite{DTEnJa18b,
  DTEnJa19}, see also \cite{Dro10, DrJa14}. The former two references
obtain compactness/convergence results in
$C([0,T];L_\textup{loc}^1(\R^N))$, and the latter in
$L_\textup{loc}^1(Q_T)$. Our framework thus improves the
compactness/convergence results of all four papers.

Finally, we mention that our techniques do not give convergence rates of the scheme since we obtain
convergence via compactness plus uniqueness arguments. It is an
interesting and hard problem to show convergence rates in the general class
of equations studied here.
No general results are known up to now, both for \eqref{GPME} and the
convection-diffusion extension \eqref{CDE} discussed in Section
\ref{sec:ResultsForCDE}. The only known results we are aware of are
some hyberbolic type of convergence estimates obtained for {\em
  entropy solutions} via doubling of variables and regularisation
arguments, see \cite{CiJa14} for nonlocal and \cite{KaSt14,KaRiSt16}
for local equations. 

\subsubsection*{Outline} We start by reviewing known compactness theorems in Section \ref{sec:abstractCompConv}. Assumptions and extensions are discussed. Main results are provided in Section \ref{sec:MainResults} As the nonlocal operator $\Levy^\mu$ will be the hardest term to control uniformly at infinity, we discuss its discretization, needed assumptions, and related estimates in that section as well. Section \ref{sec:ProofOfEquitightness} is reserved for proofs, and Section \ref{sec:ExtensionsAndComments} for extensions (including the case of the convection) and comments. Important well-known results are presented in Appendices \ref{sec:AAKR} and \ref{sec:Minkowski}, and finally, some auxiliary results regarding equitightness can be found in Appendix \ref{sec:AuxRes}.

\subsubsection*{Notation}
Derivatives are denoted by $'$,
$\frac{d}{dt}$, $\dell_{x_i}$, and $D\psi$ and $D^2\psi$ denote the $x$-gradient and Hessian matrix of $\psi$. $\rho_\delta$ with $\delta>0$ will denote a standard mollifier.

We use standard notation for $L^p(\R^N)$, $W^{p,q}(\R^N)$, and $C_\textup{b}(\R^N)$. Moreover, $C_\textup{c}^\infty(\R^N)$ is the space of smooth functions with compact support, and $C([0,T];L_\textup{loc}^p(\R^N))$ the space of measurable functions $\psi:[0,T]\to L_\textup{loc}^p(\R^N)$ such that $\psi(t)\in L_\textup{loc}^p(\R^N)$ for every $t\in[0,T]$, $\sup_{t\in[0,T]}\|\psi(t)\|_{L^p(K)}<\infty$, and $\|\psi(t)-\psi(s)\|_{L^p(K)}\to0$ when $t\to s$ for all compact $K\subset \R^N$ and $t,s\in [0,T]$. In a similar way we also define $C([0,T];L^p(\R^N))$. Note that the notion of $\R^N\times(0,T)\ni(x,t)\mapsto \psi(x,t)\in C([0,T];L_\textup{loc}^p(\R^N))$ is a subtle one. In fact, we mean that $\psi$ has an a.e.-version which is continuous $[0,T]\to L_\textup{loc}^p(\R^N)$. See e.g. p. 726 in \cite{DrEy16} for more details. The space $C^{0,\Hvar}(\R)$ with $\Hvar\in(0,1]$ is identified as the Banach space with norm $\|\cdot\|_{C^{0,\Hvar}(\R)}:=\|\cdot\|_{C_\textup{b}(\R)}+|\cdot|_{C^{0,\Hvar}(\R)}$ where
$$
|\psi|_{C^{0,\Hvar}(\R)}:=\sup_{x,y\in\R}\left\{\frac{|\psi(x)-\psi(y)|}{|x-y|^{\Hvar}}\right\}.
$$
When $\Hvar=1$, we simply get $C^{0,\gamma}(\R)= W^{1,\infty}(\R)$.

\begin{remark}
From now on, we will study convergence in $C([0,T];L^r(\R^N))$ (abbrev. $C_t(L_x^r)$)  with $r\in[1,\infty)$. We use $r$ because we want it to be a unique identifier.
\end{remark}


\section{On compactness and convergence in $C([0,T];L^r(\R^N))$}
\label{sec:abstractCompConv}

Let us give an overview of the properties needed to deduce compactness and convergence in $C_t(L_x^r)$ with $r\in[1,\infty)$. 

\subsection{Necessary and sufficient conditions for compactness} Consider a sequence $\{u_n\}_{n\in\N}\subset C_t(L_x^r)$, and assume that $\{u_n\}_{n\in\N}$ satisfies: 
\begin{enumerate}[{\rm\bf (I)}]
\item \label{sec2prop:equitight}\textup{\bf (Equitight in space pointwise in time)} For all $t\in[0,T]$,
$$
\lim_{R\to\infty}\sup_n\int_{|x|> R}|u_n(x,t)|^r\dd x=0.
$$
\item \label{sec2prop:equicontspace}\textup{\bf (Equicontinuous in space pointwise in time)} For all $t\in[0,T]$ and all $\xi\in\R^N$, there exists a modulus of continuity $\lambda$ such that 
$$
\sup_n\|u_n(\cdot+\xi,t)-u_n(\cdot,t)\|_{L^r(\R^N)}\leq \lambda(|\xi|).
$$
\item \label{sec2prop:equiconttime}\textup{\bf (Equicontinuous in time)} For all $t,s\in[0,T]$, there exists a modulus of continuity $\lambda$ such that
$$
\sup_n\|u_n(\cdot,t)-u_n(\cdot,s)\|_{L^r(\R^N)}\leq \lambda(|t-s|).
$$
\end{enumerate}

Properties \eqref{sec2prop:equitight}--\eqref{sec2prop:equiconttime} are exactly the necessary and sufficient conditions needed to obtain compactness in $C_t(L_x^r)$ (cf. the Arzel\`a-Ascoli and Kolmogorov-Riesz compactness theorems \ref{thm:AA} and \ref{thm:KR}).

\begin{theorem}[Compactness]\label{metaCompactness}
Assume $r\in[1,\infty)$, and let 
\[\{u_n\}_{n\in\N}\subset C([0,T];L^r(\R^N)).\] 
The following are equivalent:
\begin{enumerate}[{\rm (a)}]
\item The sequence $\{u_n\}_{n\in\N}$ satisfies properties \eqref{sec2prop:equitight}--\eqref{sec2prop:equiconttime}.
\item There exists a subsequence $\{u_{n_k}\}_{k\in\N}$ and a $u\in C([0,T];L^r(\R^N))$ such that
$$
u_{n_k}\to u\qquad\text{in $C([0,T];L^r(\R^N))$ as $k\to\infty$}.
$$
\end{enumerate}
\end{theorem}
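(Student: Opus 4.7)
The plan is to reduce Theorem \ref{metaCompactness} to the Banach-space-valued Arzel\`a-Ascoli theorem combined with the Kolmogorov-Riesz characterization of compactness in $L^r(\R^N)$, i.e.\ Theorems \ref{thm:AA} and \ref{thm:KR} in Appendix \ref{sec:AAKR}. No new analytical ingredient is needed; the statement is essentially a repackaging of these two classical results applied to $\{u_n\}_{n\in\N}$ viewed as a family of continuous maps $[0,T]\to L^r(\R^N)$.

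For the direction (a) $\Rightarrow$ (b), I would proceed in two stages. First, fix $t\in[0,T]$ and view $\{u_n(\cdot,t)\}_{n\in\N}$ as a subset of $L^r(\R^N)$. Condition \eqref{sec2prop:equitight} yields equitightness and \eqref{sec2prop:equicontspace} gives equicontinuity of translations; uniform $L^r$-boundedness follows by splitting $\|u_n(\cdot,t)\|_{L^r}^r$ into an outer piece controlled by \eqref{sec2prop:equitight} and an inner piece bounded via \eqref{sec2prop:equicontspace} by convolving each $u_n$ with a fixed mollifier $\rho_\delta$ and using a single anchor estimate in a bounded ball. Kolmogorov-Riesz then gives that $\{u_n(\cdot,t)\}_n$ is precompact in $L^r(\R^N)$ for every $t$. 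Second, regard $\{u_n\}_n$ as a sequence of continuous maps $[0,T]\to L^r(\R^N)$; property \eqref{sec2prop:equiconttime} is precisely the uniform equicontinuity hypothesis of Arzel\`a-Ascoli, while pointwise precompactness at each $t$ was obtained in the first stage. Theorem \ref{thm:AA} then extracts a subsequence $\{u_{n_k}\}_k$ converging in $C([0,T];L^r(\R^N))$, and the limit $u$ lies in that space since uniform limits of continuous Banach-valued maps are continuous.

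For the reverse direction (b) $\Rightarrow$ (a), I would exploit that the limit $u$ itself belongs to $C([0,T];L^r(\R^N))$, so for each $t$ the function $u(\cdot,t)$ has vanishing tails at infinity, a continuous translation map in $L^r$, and the map $t\mapsto u(\cdot,t)$ is uniformly continuous on $[0,T]$ by compactness. Given $\varepsilon>0$, combine the uniform estimate $\sup_{t\in[0,T]}\|u_{n_k}(\cdot,t)-u(\cdot,t)\|_{L^r}<\varepsilon$ for large $k$ with a triangle inequality through $u(\cdot,t)$ to transfer the tail, translation, and time-continuity controls from $u$ to the tail of the subsequence; the finitely many remaining indices are handled individually since each $u_n(\cdot,\cdot)\in C([0,T];L^r(\R^N))$ already enjoys all three properties singly, so passing to a uniform modulus is a matter of taking maxima. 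In the compactness reading of (b), these properties then propagate to the whole sequence because every subsequence of $\{u_n\}$ would otherwise contradict relative compactness.

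The main obstacle is organizational rather than substantive: the routine bookkeeping of pooling per-$n$ moduli into uniform moduli, together with the subtle interpretation of (b) --- read as relative compactness in $C([0,T];L^r(\R^N))$, the equivalence is sharp, while a literal reading (one convergent subsequence exists) would make (b) $\Rightarrow$ (a) fail for pathological sequences. All the genuine content of the paper will lie later in verifying the three properties \eqref{sec2prop:equitight}--\eqref{sec2prop:equiconttime} for the finite-difference approximations of \eqref{GPME}, which is where the novel equitightness estimates enter.
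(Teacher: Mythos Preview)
Your approach is essentially identical to the paper's: apply Kolmogorov--Riesz (Theorem~\ref{thm:KR}) pointwise in $t$ to obtain relative compactness of $\{u_n(\cdot,t)\}_n$ in $L^r(\R^N)$, and then invoke Arzel\`a--Ascoli (Theorem~\ref{thm:AA}) with $K=[0,T]$ and $X=L^r(\R^N)$; the paper handles $L^r$-equiboundedness by citing \cite{H-OHoMa17} rather than your mollifier-anchor argument, but the substance is the same. Your remark that (b) must be read as relative compactness of $\{u_n\}_n$ in $C([0,T];L^r(\R^N))$ for the equivalence to be sharp is correct and is implicit in the paper's appeal to the ``if and only if'' directions of both classical theorems.
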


\begin{remark}
The classical Kolmogorov-Riesz compactness theorem requires the family of functions to be equibounded, that is, for all $t\in[0,T]$, 
$$
\sup_n\|u_n(\cdot,t)\|_{L^r(\R^N)}<\infty.
$$
However, in \cite{H-OHoMa17}, it has been pointed out that such a property follows from properties \eqref{sec2prop:equitight}--\eqref{sec2prop:equicontspace}.
\end{remark}

\begin{proof}
The fact that the sequence $x\mapsto \{u_n(x,t)\}_{n\in\N}$ satisfies \eqref{sec2prop:equitight}--\eqref{sec2prop:equicontspace} is, by the Kolmogorov-Riesz compactness theorem (cf. Theorem \ref{thm:KR}), equivalent with $\{x\mapsto \{u_n(x,t)\}_{n\in\N}\}$ being relatively compact in $L^r(\R^N)$ for all $t\in[0,T]$. Finally, since the sequence $t\mapsto \{u_n(x,t)\}_{n\in\N}$ satisfies \eqref{sec2prop:equiconttime}, the proof is completed by an application of the Arzel\`a-Ascoli compactness theorem (cf. Theorem \ref{thm:AA}). 
\end{proof}

\subsection{Consequences}
In our context, $\{u_n\}_{n\in\N}$ will be a sequence of,
e.g., numerical approximations of some function $u$ which could, e.g.,
be a distributional/very weak, entropy, energy, strong, mild, or
classical solution of \eqref{GPME}. The next properties therefore
relate the above compactness with results for the underlying
equation. When we say \emph{solve} and \emph{solution} in this section, we mean in one of the
senses mentioned above. The solution concepts in
\eqref{sec2prop:consistency} and \eqref{sec2prop:uniqueness} need to be the same. 

\begin{enumerate}[{\rm\bf (I)}]
\setcounter{enumi}{3}
\item \label{sec2prop:consistency}\textup{\bf (Consistent
  approximation)} Assume $\{u_n\}_{n\in\N}$ is a consistent
  approximation of
  problem (P), i.e.,
$$u_n\to u\quad\text{in $C([0,T];L^r(\R^N))$ as $n\to\infty$}\qquad
  \implies\qquad\text{$u$ solves (P).}
$$
\end{enumerate}

\begin{corollary}[Existence by consistent approximation]\label{metaExistence}
Assume $r\in[1,\infty)$. Let $\{u_n\}_{n\in\N}\subset C([0,T];L^r(\R^N))$ be a sequence satisfying properties \eqref{sec2prop:equitight}--\eqref{sec2prop:consistency}. Then there exist a solution $u\in C([0,T];L^r(\R^N))$ of \textup{(P)}.
\end{corollary}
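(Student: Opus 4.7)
The plan is to combine Theorem \ref{metaCompactness} with property \eqref{sec2prop:consistency} in essentially one line, so the proof is a very short concatenation rather than a real argument. First I would observe that the hypotheses include properties \eqref{sec2prop:equitight}--\eqref{sec2prop:equiconttime}, so Theorem \ref{metaCompactness} (the equivalence (a)$\Leftrightarrow$(b)) applies directly to $\{u_n\}_{n\in\N}$. This produces a subsequence $\{u_{n_k}\}_{k\in\N}$ and a candidate limit $u\in C([0,T];L^r(\R^N))$ with
$$
u_{n_k}\to u\qquad\text{in $C([0,T];L^r(\R^N))$ as $k\to\infty$}.
$$

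Next I would invoke property \eqref{sec2prop:consistency}. The only subtle point is that \eqref{sec2prop:consistency} is phrased in terms of the full sequence $\{u_n\}_{n\in\N}$, whereas compactness only gave us a subsequence. This is harmless: the subsequence $\{u_{n_k}\}_{k\in\N}$ is itself a sequence of approximations inheriting all the structural properties used to formulate \eqref{sec2prop:consistency} (since consistency is a property of the approximation scheme, not of a particular enumeration). Hence from $u_{n_k}\to u$ in $C([0,T];L^r(\R^N))$ we conclude that $u$ solves (P) in the chosen solution sense.

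The main (and essentially only) conceptual check is that the solution concept used in \eqref{sec2prop:consistency} is stable under passage to the limit in $C_t(L_x^r)$; this is precisely what \eqref{sec2prop:consistency} postulates, so nothing further needs to be verified at this abstract level. In the concrete applications later in the paper, verifying \eqref{sec2prop:consistency} for distributional/very weak, entropy, or mild solutions will be the real work, but at the level of this corollary it is an assumption. Therefore $u\in C([0,T];L^r(\R^N))$ is the desired solution of (P), which completes the proof.
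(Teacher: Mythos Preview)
Your proposal is correct and matches the paper's own proof essentially line for line: apply Theorem~\ref{metaCompactness} to extract a convergent subsequence in $C([0,T];L^r(\R^N))$, then invoke property~\eqref{sec2prop:consistency} to identify the limit as a solution of (P). The brief remark you make about applying consistency to a subsequence is a reasonable clarification but is left implicit in the paper.
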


\begin{proof}
By  \eqref{sec2prop:equitight}--\eqref{sec2prop:equiconttime}, any subsequence $\{u_{n_k}\}_{k\in\N}$ defined in Theorem \ref{metaCompactness} has a further subsequence  converging to some function $u\in C([0,T];L^r(\R^N))$. The consistency given by \eqref{sec2prop:consistency} tells us that the limit $u$ is a solution of (P).
\end{proof}

We end this discussion, by noting that full convergence of the sequence relies on uniqueness of the problem (P):

\begin{enumerate}[{\rm\bf (I)}]
\setcounter{enumi}{4}
\item \label{sec2prop:uniqueness}\textup{\bf (Uniqueness)} There is at most one solution $u\in C([0,T];L^r(\R^N))$ of (P). 
\end{enumerate}

\begin{proposition}[Convergence by uniqueness]\label{metaConvergence}
Assume $r\in[1,\infty)$. Let $\{u_n\}_{n\in\N}\subset C([0,T];L^r(\R^N))$ be a sequence satisfying properties \eqref{sec2prop:equitight}--\eqref{sec2prop:uniqueness}. Then 
$$
u_{n}\to u\quad\text{in \quad $C([0,T];L^r(\R^N))$\quad as \quad$n\to\infty$},
$$
where $u$ is the unique solution of \textup{(P)}.
\end{proposition}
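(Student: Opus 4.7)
The plan is to combine Corollary~\ref{metaExistence} with the uniqueness property~\eqref{sec2prop:uniqueness} via a standard subsequence-of-a-subsequence argument; no new machinery beyond Theorem~\ref{metaCompactness} is needed.

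First I would apply Corollary~\ref{metaExistence}, which already uses properties \eqref{sec2prop:equitight}--\eqref{sec2prop:consistency}, to produce some function $u\in C([0,T];L^r(\R^N))$ that solves (P). Property~\eqref{sec2prop:uniqueness} then guarantees that this $u$ is \emph{the} unique solution of (P) in $C([0,T];L^r(\R^N))$. At this stage I only know that some subsequence of $\{u_n\}_{n\in\N}$ converges to $u$, and the task is to promote this to convergence of the full sequence.

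To do so I would argue by contradiction. Suppose that $u_n\not\to u$ in $C([0,T];L^r(\R^N))$. Then there exist $\varepsilon>0$ and a subsequence $\{u_{n_k}\}_{k\in\N}$ such that
$$
\|u_{n_k}-u\|_{C([0,T];L^r(\R^N))}\geq \varepsilon \qquad\text{for all } k\in\N.
$$
Because properties \eqref{sec2prop:equitight}--\eqref{sec2prop:equiconttime} are formulated as $\sup_n$-bounds, they are trivially inherited by every subsequence. Theorem~\ref{metaCompactness} therefore extracts a further subsequence $\{u_{n_{k_j}}\}_{j\in\N}$ converging in $C([0,T];L^r(\R^N))$ to some $\tilde u \in C([0,T];L^r(\R^N))$. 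Property~\eqref{sec2prop:consistency}, applied to this convergent (sub)sequence, identifies $\tilde u$ as a solution of (P), and then property~\eqref{sec2prop:uniqueness} forces $\tilde u = u$. This contradicts the displayed lower bound, completing the argument.

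Honestly, there is no real obstacle here: this is the classical ``compactness $+$ consistency $+$ uniqueness $\Rightarrow$ convergence of the whole sequence'' pattern, and all substantive work has been absorbed into the hypotheses \eqref{sec2prop:equitight}--\eqref{sec2prop:uniqueness} and into Theorem~\ref{metaCompactness}. The only point one should verify in passing is that properties \eqref{sec2prop:equitight}--\eqref{sec2prop:consistency} descend to arbitrary subsequences, which is immediate from their formulations.
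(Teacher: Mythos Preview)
Your proposal is correct and follows essentially the same approach as the paper's own proof: a contradiction argument using compactness (Theorem~\ref{metaCompactness}) on a bad subsequence, identification of the limit via consistency, and then uniqueness to force $\tilde u=u$. The only cosmetic difference is that you explicitly note that properties \eqref{sec2prop:equitight}--\eqref{sec2prop:consistency} pass to subsequences, which the paper leaves implicit.
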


\begin{proof}
Assume by contradiction that $\{u_n\}_{n\in\N}$ does not converge to $u$ in
$C_t(L_x^r)$. Then there is a subsequence and an $\veps>0$ such that
$\sup_{t\in[0,T]}\|u-u_{n_{k}}\|_{L^r(\R^N)}>\veps$ for every $k\in\N$.
By compactness (Theorem
\ref{metaCompactness}) there is a further subsequence  
$\{u_{n_{k_j}}\}_{j\in\N}$ converging in $C_t(L_x^r)$ to a function
$\tilde u$ which is a solution of (P) (Corollary
\ref{metaExistence}). However, uniqueness tells us that $u=\tilde u$, and we have a
contradiction. The whole sequence $\{u_n\}_{n\in\N}$ thus
converges to $u$ in $C_t(L_x^r)$.
\end{proof}

\subsection{Some variants} 

We will now discuss some variants of the above conditions which we will use in the paper, and also a comparison with other compactness results.

\subsubsection*{Equitight in space uniformly in time.} Now, we replace \eqref{sec2prop:equitight} and \eqref{sec2prop:equiconttime} by:
\begin{enumerate}[{\rm\bf (i)}]
\item \label{sec2prop:equitight'}\textup{\bf (Equitight in space uniformly in time)} For all $t\in[0,T]$,
$$
\lim_{R\to\infty}\sup_n\sup_{t\in[0,T]}\int_{|x|> R}|u_n(x,t)|^r\dd x=0.
$$
\setcounter{enumi}{2}
\item \label{sec2prop:equiconttime'}\textup{\bf (Equicontinuous in time)} For all $t,s\in[0,T]$ and all compact $K\subset \R^N$, there exists a modulus of continuity $\lambda_K$ such that
$$
\sup_n\int_K|u_n(\cdot,t)-u_n(\cdot,s)|\dd x\leq \lambda_K(|t-s|).
$$
\end{enumerate}

We immediately see that \eqref{sec2prop:equitight'} implies \eqref{sec2prop:equitight}. Moreover:

\begin{lemma}\label{FromLocalToGlobalEquicontinuity}
Assume $r\in[1,\infty)$. If the sequence $\{u_n\}_{n\in\N}\subset C([0,T];L^r(\R^N))$ satisfies \eqref{sec2prop:equitight'} and \eqref{sec2prop:equiconttime'}, then for all $\veps>0$ there exists $\eta>0$ such that for all $n$ and all $0\leq t\leq s\leq T$
\begin{equation}\label{sec2prop:equiconttimeNOMODULUS}
|t-s|<\eta\implies \int_{\R^N}|u_n(x,t)-u_n(x,s)|^r\dd x<\veps.
\end{equation}
\end{lemma}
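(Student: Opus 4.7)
The plan is to split $\R^N$ at some large radius $R$: use \eqref{sec2prop:equitight'} to kill the tail $\{|x|>R\}$ and use \eqref{sec2prop:equiconttime'} on the ball $\{|x|\le R\}$, then upgrade the local $L^1$-modulus that it provides into a local $L^r$-modulus.

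Fix $\veps>0$. For the tail, the convexity inequality $|a-b|^r\le 2^{r-1}(|a|^r+|b|^r)$, valid for $r\ge 1$, gives
\begin{equation*}
\int_{|x|>R}|u_n(x,t)-u_n(x,s)|^r\dd x \;\le\; 2^r\,\sup_{n}\sup_{\tau\in[0,T]}\int_{|x|>R}|u_n(x,\tau)|^r\dd x,
\end{equation*}
and \eqref{sec2prop:equitight'} lets me pick $R=R_0$ large enough that the right-hand side is $<\veps/2$ uniformly in $n,t,s$. This piece is completely independent of $|t-s|$.

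On $K:=\overline{B_{R_0}}$, \eqref{sec2prop:equiconttime'} supplies only the $L^1_K$-modulus $\lambda_K$, whereas the conclusion is stated in $L^r$. To pass between the two I would invoke the uniform $L^\infty$-bound $M:=\sup_{n}\sup_{t\in[0,T]}\|u_n(\cdot,t)\|_{L^\infty(\R^N)}$ coming from the standing $L^\infty$-stability framework of the paper: writing $|u_n(x,t)-u_n(x,s)|^r\le (2M)^{r-1}|u_n(x,t)-u_n(x,s)|$, property \eqref{sec2prop:equiconttime'} yields
\begin{equation*}
\int_K|u_n(x,t)-u_n(x,s)|^r\dd x \;\le\; (2M)^{r-1}\lambda_K(|t-s|).
\end{equation*}
Choosing $\eta>0$ with $(2M)^{r-1}\lambda_K(\eta)<\veps/2$ and adding the tail and ball contributions then yields \eqref{sec2prop:equiconttimeNOMODULUS}.

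The only delicate step is this last interpolation: a sequence that is uniformly $L^1_{\textup{loc}}$-continuous in time can have arbitrarily large $L^r_{\textup{loc}}$-oscillations in time if its mass is allowed to concentrate on sets of vanishing measure, so some additional uniform control (here the uniform essential-sup bound on $\{u_n\}$) is genuinely needed to convert the $L^1_{\textup{loc}}$-modulus into an $L^r_{\textup{loc}}$-modulus. The tail/ball splitting itself is routine; the content of the lemma is that this uniform essential boundedness, combined with the uniform tail control \eqref{sec2prop:equitight'}, is all that is required.
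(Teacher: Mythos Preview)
Your ball/tail splitting is exactly the paper's argument. The paper's proof, however, applies \eqref{sec2prop:equiconttime'} directly to $\int_K|u_n(\cdot,t)-u_n(\cdot,s)|^r\dd x$, silently treating the local modulus as an $L^r$-modulus, whereas \eqref{sec2prop:equiconttime'} as written only controls the $L^1$-integrand. You are right that the passage from $L^1_K$ to $L^r_K$ is not free: without the uniform $L^\infty$-bound the abstract lemma is actually false for $r>1$ (take $u_n(x,t)=n^{1/r}\mathbf{1}_{[0,1/n]}(x)\sin(n^{1-1/r}t)$, which satisfies \eqref{sec2prop:equitight'} and \eqref{sec2prop:equiconttime'} with $\lambda_K(\delta)=\delta$, yet has $L^r$-oscillations of order $1$ on arbitrarily short time intervals). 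In every application in the paper the uniform $L^\infty$-bound is available (Proposition~\ref{preResNumSch}(b)(v)), so your interpolation $|u_n(\cdot,t)-u_n(\cdot,s)|^r\le(2M)^{r-1}|u_n(\cdot,t)-u_n(\cdot,s)|$ is precisely what closes the argument; the paper simply elides this step.
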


\begin{remark}
The condition \eqref{sec2prop:equiconttimeNOMODULUS} is a weaker version of the original \eqref{sec2prop:equiconttime}. However, the second item of the Arzel\`a-Ascoli compactness theorem \ref{thm:AA} is still fulfilled, and hence, Theorem \ref{metaCompactness} still holds under the assumptions \eqref{sec2prop:equitight'}, \eqref{sec2prop:equicontspace}, and \eqref{sec2prop:equiconttime'}.
\end{remark}

\begin{proof}
Fix $\veps>0$ and the compact set $K$ such that $B_R(0)\subset K\subset \R^N$. Then
\begin{equation*}
\begin{split}
&\sup_n\int_{\R^N}|u_n(x,t)-u_n(x,s)|^r\dd x\\
&\leq \sup_n\int_{K}|u_n(x,t)-u_n(x,s)|^r\dd x+2\sup_n\sup_{t\in[0,T]}\int_{|x|> R}|u_n(x,t)|^r\dd x\\
\end{split}
\end{equation*}
By \eqref{sec2prop:equitight'}, we choose $R>0$ such that
$$
2\sup_n\sup_{t\in[0,T]}\int_{|x|> R}|u_n(x,t)|^r\dd x<\frac{\veps}{2},
$$
and then by \eqref{sec2prop:equiconttime'}, we take $\eta>0$ such that
$$
\sup_n\int_{K}|u_n(x,t)-u_n(x,s)|^r\dd x<\frac{\veps}{2}.
$$
The proof is complete.
\end{proof}

\subsubsection*{Variants of Arzel\`a-Ascoli and Kolmogorov-Riesz.}
Generalizations of the Arzel\`a-Ascoli compactness theorem to $L^p$-spaces can be found in Simon's well-cited paper \cite{Sim87}. There he discusses compactness of functions which are $L^p$ in time with values in some Banach space $B$. Sections 8 and 9 of that paper also contain what is commonly known as the Aubin-Lions lemma. Such an approach is different than what we do here, and is probably more suited in an energy-like setting. A nonlocal version is given by Theorem 3.1 in \cite{IgIgSt-Du15}.

Regarding Kolmogorov-Riesz on compact sets, the Helly compactness theorem can be used as a particular version in the $BV$-setting, see \cite[Section 6]{H-OHo10}.


\section{Main results}
\label{sec:MainResults}

\subsection{Assumptions and concept of solution}

Consider the following typical assumptions on \eqref{GPME}:
\begin{align}
&\varphi\in C_\textup{loc}^{0,\Hvar}(\R,\R), \text{$\Hvar\in(0,1]$, is nondecreasing}.
\tag{$\textup{A}_\varphi$}&
\label{phias}\\
&g\text{ is measurable and }\textstyle\int_0^T\big(\|g(\cdot,t)\|_{L^1(\R^N)}+\|g(\cdot,t)\|_{L^\infty(\R^N)}\big)\dd t<\infty.
\tag{$\textup{A}_g$}&
\label{gas}\\
&u_0\in L^1(\R^N)\cap L^{\infty}(\mathbb{R}^N).
\tag{$\textup{A}_{u_0}$}&
\label{u_0as}
\end{align}

In this paper, we restrict ourselves to $\Operator=L^\sigma+\Levy^\mu$, where 
\begin{align}\label{sigmaas}\tag{$\textup{A}_{\sigma}$} 
  L^\sigma=c \Delta:=c\sum_{i=1}^N\dell_{x_ix_i}^2\qquad\text{where $c\in\{0,1\}$},
  \end{align}
and $\Levy^\mu$ is a general symmetric L\'evy operator under the
usual assumption: 
\begin{align}\label{muas}\tag{$\textup{A}_{\mu}$} 
&\mu \text{ is a nonnegative symmetric Radon measure on
}\R^N\setminus\{0\}
\text{ satisfying}
\nonumber\\ 
&\int_{0<|z|\leq1}|z|^2\dd \mu(z)+\int_{|z|>1}\dd
\mu(z)<\infty\nonumber.
\end{align}

\begin{remark}
\begin{enumerate}[{\rm (a)}]
\item Assumption \eqref{gas} is the same as saying that 
$$
g\in L^1(0,T;L^1(\R^N)\cap L^\infty(\R^N))
$$
where the space is understood as iterated $L^{P}$-spaces in the sense of \cite{BePa61}. Of course, we have that $L^1(0,T;L^1(\R^N))=L^1(Q_T)$.
\item Without loss of generality, we can assume $\varphi(0) = 0$ (by adding constants to $\varphi$ in \eqref{GPME}). 
\item As solutions of \eqref{GPME} will be bounded, we can always assume that $\varphi\in C^{0,\Hvar}(\R)$ in \eqref{phias}. 
\end{enumerate}
\end{remark}

We will work with very weak solutions of \eqref{GPME}:

\begin{definition}[Very weak solutions]\label{def:VeryWeak} 
Let  $u_0\in L_\textup{loc}^1(\R^N)$ and $g\in  L_\textup{loc}^1(Q_T)$. Then $u\in  L_\textup{loc}^1(Q_T)$ is a \emph{very weak} solution of \eqref{GPME} if, for all $\psi\in C_\textup{c}^\infty(\R^N\times[0,T))$, $\varphi(u)\Operator[\psi]\in L^1(Q_T)$ and
$$
\iint_{Q_T} \big(u\dell_t\psi+\varphi(u)\Operator[\psi]+g\psi\big)\dd x \dd t+\int_{\R^N}u_0(x)\psi(x,0)\dd x=0.
$$
\end{definition}

Note that under the assumptions \eqref{phias}--\eqref{muas}, very weak
solutions of \eqref{GPME} are unique in $L^1(Q_T)\cap L^\infty(Q_T)$ by
\cite[Theorem 3.1]{DTEnJa17b}. By the same reference,
$$
\|u(\cdot,t)\|_{L^\infty(\R^N)}\leq \|u_0\|_{L^\infty(\R^N)}\qquad\text{and}\qquad u\in C([0,T];L_\textup{loc}^1(\R^N)),
$$
and hence, they are moreover in $C([0,T];L_\textup{loc}^r(\R^N))$ for all $r\in[1,\infty)$.

\subsection{Approximation through numerical method}

To define our numerical scheme, we need to introduce a discrete grid in $\R^N\times(0,T)$. Consider a sequence of numbers defining a nonuniform gird in time $\GridT:=\{t_j\}_{j=0}^{J}$ such that $0=t_0<t_1<\ldots <t_J=T$ and let $\J:=\{1,\ldots,J\}$. The time steps are then denoted by 
\begin{equation*}
\Delta t_j=t_j-t_{j-1}>0\textup{ for every $j\in \J$} \qquad \textup{and}  \qquad \Delta t=\max_{j\in \J}\{\Delta t_j\}.
\end{equation*}
Moreover, let the space step $h>0$, and consider the discrete subset of $\R^N$ given by 
\begin{equation*}
\Grid:= h\Z^N=\{x_\beta:=h \beta: \beta\in \Z^N\}.
\end{equation*}

We are now ready to define our numerical scheme. Since $u_0$ and $g$ do not necessarily have pointwise values, we set, for $R_h:=h(-\frac{1}{2},\frac{1}{2}]$,
\begin{equation}\label{discData}
U_\beta^0:=\frac{1}{h^N}\int_{x_\beta+R_h}u_0(x)\dd x\quad\text{and}\quad G_\beta^j:=\frac{1}{h^N\Delta t_j}\int_{t_{j-1}}^{t_j}\int_{x_\beta+R_h}g(x,\tau)\dd x \dd \tau.
\end{equation}
Then we seek a function $U:\Grid\times \GridT\to\R$ which solves
\begin{equation}\label{defNumSch}
\begin{split}
U_\beta^j=U_\beta^{j-1}+\Delta t_j\Levy^{h}[\varphi(U_{\cdot}^j)]_\beta+\Delta t_jG_\beta^j\quad\forall(\beta,j)\in(\Z^N,\J).
\end{split}
\end{equation}

\begin{remark}
Note that the above scheme is implicit in the diffusion term. This is done to ensure a simple theoretical analysis for merely H\"older continuous $\varphi$. If we choose an explicit scheme, we need to rely on a CFL-type stability condition which involves $|\varphi|_{C^{0,1}}^{-1}$, and this condition will blow up if $\varphi$ is not Lipschitz continuous. In the pure H\"older case, we would then need a further approximation of the nonlinearity. We refer the reader to \cite{DTEnJa18b, DTEnJa19} for details.
\end{remark}

The discrete variants of $\dell_t$ and $\Operator$ will now be discussed. For the partial derivative in time, we use the simple backward difference:
$$
\frac{\psi(t_j)-\psi(t_{j-1})}{\Delta t_j} \qquad\text{for all $\psi:\GridT\to\R$}.
$$
The finite-difference discretization of $\Delta$ is well-known and given by
\[
\Delta_h \psi(x)=\sum_{i=1}^{N}\frac{\psi(x+he_i)+\psi(x-he_i)-2\psi(x)}{h^2}.
\]
Recently, such approximation was shown to be in the class of L\'evy
operators for a certain finite measure \cite{DTEnJa17a, DTEnJa17b,
  DTEnJa18b, DTEnJa19}. We thus present a unified approach to
numerical discretizations for local and nonlocal operators. In fact,
by choosing the correct weights, we recover either local, nonlocal, or
combinations of both. Hence, consider the family of bounded,
symmetric, and monotone operators $\Levy^h: \ell^1(\Grid)\to \R$ given
by
\begin{equation*}
\Levy^h[\psi](x_\beta)=\Levy^h[\psi]_\beta=\sum_{\gamma\not=0} (\psi(x_\beta+z_\gamma)-\psi(x_\beta)) \omega_{\gamma,h} \quad\text{for all $\psi:\Grid\to\R$},
\end{equation*}
where $z_{-\gamma}=-z_\gamma\in \mathcal{G}_h$ and
$\sum_{\gamma\neq0}\omega_{\gamma,h}<\infty$. The sum is
over $0\neq \gamma\in \Z^N$ and the weights $\omega_{\gamma,h}$ such
that $\Levy^h$ converges to $\mathfrak{L}^{\sigma,\mu}$ in a suitable
sense, see \eqref{eq:OperatorConsistent} below. 
\begin{remark}
We can also write 
\[
\Levy^h[\psi]_\beta=\int_{\R^N}(\psi(x_\beta+z)-\psi(x_\beta))\dd \nu_h(z) \quad \textup{with} \quad \nu_h(z)= \sum_{\gamma\not=0} \delta_{z_\gamma}(z) w_{\gamma,h},
\]
where $\delta_{z_\gamma}$ is the dirac delta measure centered at $z_\gamma$ (cf. \cite{DTEnJa19}).
\end{remark}
We then need the following discrete versions of \eqref{muas}:
\begin{align}\label{Dmuas}\tag{$\textup{A}_{\omega_h}$} 
&\text{For all $h>0$ and $\gamma\neq0$, }\omega_{\gamma,h}=\omega_{-\gamma,h}\geq0\text{ and}
\nonumber\\ 
&\quad
\sup_{0<h<1}\bigg\{\sum_{h\leq |z_\gamma|\leq 1}|z_\gamma|^2\omega_{\gamma,h}+\sum_{|z_\gamma|>1}\omega_{\gamma,h}\bigg\}<\infty\nonumber.
\end{align}

\begin{remark}
The parallel of the above assumptions with \eqref{muas} might be easier to see in terms of $\nu_h$:
$$
\sup_{0<h<1}\bigg\{\int_{0<|z|\leq 1}|z|^2\dd \nu_h(z)
+\int_{|z|>1}\dd \nu_h(z)\bigg\}<\infty.
$$
In Appendix \ref{sec:DiscreteAndContinuousEstimatesNonlocal}, we also provide conditions for checking this assumption through operator consistency (see \eqref{eq:OperatorConsistent} below) and assumptions on the limit operator $\Operator$.
\end{remark}

Equitightness and convergence in $C([0,T];L^r(\R^N))$ with $r\in[1,\infty)$ of numerical schemes will be presented for a suitable interpolant which extends the discrete solutions to the whole space $\R^N\times(0,T)$. For that reason, let us define the piecewise \emph{constant} space interpolant $\Ux:\R^N\to\R$ of $U:\Grid \to \R$ as
\begin{equation*}
\Ux(x)=\sum_{x_\beta\in \Grid} \indik_{x_\beta+R_h}(x)U_\beta, \qquad\text{where $U_\beta=U(x_\beta)$,}
\end{equation*}
and the piecewise \emph{linear} in time and piecewise \emph{constant} in space space-time interpolant $\Uxt:\R^N\times[0,T]\to \R$ of $U:\Grid\times \GridT \to \R$ as
\begin{equation}\label{SpaceTimeInterpolant}
\begin{split}
\Uxt(x,t)&=\overline{U^0}(x)\mathbf{1}_{\{t_0\}}(t)\\
&\quad+\sum_{t_j\in\GridT\setminus{t_0}}  \indik_{(t_{j-1},t_j]}(t)\Big(\overline{U^{j-1}}(x)+\frac{t-t_{j-1}}{t_j-t_{j-1}}\big(\overline{U^j}(x)-\overline{U^{j-1}}(x)\big)\Big).
\end{split}
\end{equation}

\subsection{Equitightness}
Our results make use of a family of functions $\mathcal{X}$ of the form 
\begin{equation}\label{antiCutOff1}
\mathcal{X}_R(x):=\mathcal{X}\left(\frac{x}{R}\right)\qquad\text{with}\qquad R>0,
\end{equation}
where $\mathcal{X}$ is some fixed function such that
\begin{equation}\label{antiCutOff2}
\begin{split}
0\leq\mathcal{X}\in C_\textup{b}^\infty(\R^N)\qquad\text{and}\qquad \mathcal{X}(x)=\begin{cases}
0 & \text{if}\quad |x|\leq\frac{1}{2},\\
1 & \text{if}\quad |x|\geq 1.
\end{cases}
\end{split}
\end{equation}
Observe that $D^k\mathcal{X}\in C_{\textup{c}}(\R^N)$ for all $k\in \N$ and $1-\mathcal{X}\in C_{\textup{c}}(\R^N)$. Moreover:

\begin{lemma}\label{convDAntiCutOff}
Assume $k\in\N\cup\{0\}$. The function defined by \eqref{antiCutOff1}--\eqref{antiCutOff2} satisfies:
\begin{enumerate}[{\rm (a)}]
\item $\mathcal{X}_R\to0$ pointwise as $R\to\infty$.
\item If $\frac{N}{k}<p\leq\infty$, then $\|D^k\mathcal{X}_R\|_{L^p(\R^N)}=
R^{\frac{N}{p}-k}\|D^k\mathcal{X}\|_{L^p(\R^N)} \to0$ as $R\to\infty$.
\end{enumerate}
\end{lemma}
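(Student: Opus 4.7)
The proof is a straightforward scaling argument, so I will just outline the two parts.

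For part (a), I would fix $x\in\R^N$ and observe that as soon as $R>2|x|$ one has $|x/R|<\tfrac12$, which by \eqref{antiCutOff2} forces $\mathcal{X}_R(x)=\mathcal{X}(x/R)=0$. Pointwise convergence to zero is then immediate.

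For part (b), the plan is to start from the chain rule identity $D^k\mathcal{X}_R(x)=R^{-k}(D^k\mathcal{X})(x/R)$, valid for $k\ge 1$ since $\mathcal{X}$ is smooth. Performing the change of variables $y=x/R$ (with Jacobian factor $R^N$) inside $\|\cdot\|_{L^p(\R^N)}$—and separately handling $p=\infty$, where the $R^N$-factor is absorbed into the supremum under the convention $N/\infty=0$—yields the scaling identity
$$
\|D^k\mathcal{X}_R\|_{L^p(\R^N)}=R^{\frac{N}{p}-k}\|D^k\mathcal{X}\|_{L^p(\R^N)}.
$$
To conclude, I would note that $\mathcal{X}$ is constant both on $\{|y|\le\tfrac12\}$ and on $\{|y|\ge 1\}$, so for every $k\ge 1$ the derivative $D^k\mathcal{X}$ is supported in the compact annulus $\{\tfrac12\le|y|\le 1\}$; combined with $\mathcal{X}\in C_{\textup{b}}^\infty(\R^N)$ this guarantees $\|D^k\mathcal{X}\|_{L^p(\R^N)}<\infty$ for every $p\in[1,\infty]$. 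The standing assumption $p>N/k$ is then exactly equivalent to the exponent $\tfrac{N}{p}-k$ being strictly negative, so $R^{N/p-k}\to 0$ as $R\to\infty$.

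There is no genuine obstacle in this argument; the only two points requiring attention are (i) confirming that the compact support of $D^k\mathcal{X}$ in the annulus $\{\tfrac12\le|y|\le 1\}$ makes the right-hand side constant finite for all $p\in[1,\infty]$, and (ii) matching the role of the hypothesis $p>N/k$ with the sign of the scaling exponent. Both follow directly from the defining properties of $\mathcal{X}$ in \eqref{antiCutOff2}.
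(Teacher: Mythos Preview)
Your proposal is correct; the paper states this lemma without proof, treating it as an elementary scaling computation, and your argument supplies precisely the standard details (pointwise vanishing from the support condition, the chain rule plus change of variables for the $L^p$ identity, and the sign of the exponent from $p>N/k$).
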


\begin{remark}\label{rem:convDAntiCutOff}
We note that $p=\infty$ gives the least restrictive convergence of $D^k\mathcal{X}_R\to0$ in $L^p(\R^N)$ as $R\to\infty$.
\end{remark}

From now on, we denote the space-time interpolant by $\Uxt_h$ to indicate its dependence on $h$.

\begin{theorem}[Equitightness estimate]\label{tailControlGPME2}
Assume \eqref{phias}, \eqref{gas}, \eqref{u_0as}, \eqref{Dmuas}, and $r\in[1,\infty)$. Let $p\in(1,1/(1-\Hvar)]$ and $q$ be such that $p^{-1}+q^{-1}=1$ (hence, $q\in[1/\Hvar,\infty)$), and let $U_\beta^j$ be an a.e.-solution of \eqref{defNumSch} corresponding to $U_\beta^0$ and $G_\beta^j$, and $\Uxt_h$ the corresponding interpolant. Then
\begin{equation*}
\begin{split}
&\sup_{t\in[0,T]}\int_{|x|>R}|\Uxt_h(x,t)|^r\dd x\\
&\leq M_{u_0,g}^{r-1}\Big(\|u_0\mathcal{X}_R\|_{L^1(\R^N)}+\|g\mathcal{X}_R\|_{L^1(0,T;L^1(\R^N))}+C_{u_0,g, \varphi}T\|\Levy^h[\mathcal{X}_R]\|_{L^p(\R^N)}\Big),
\end{split}
\end{equation*}
where 
\begin{equation*}
M_{u_0,g}:=\|u_0\|_{L^\infty(\R^N)}+\|g\|_{L^1(0,T;L^\infty(\R^N))}
\end{equation*}
and
\begin{equation*}
C_{\varphi,u_0,g}:=|\varphi|_{C^{0,\Hvar}}(M_{u_0,g})^{\Hvar-\frac{1}{q}}\big(\|u_0\|_{L^1(\R^N)}+\|g\|_{L^1(Q_T)}\big)^{\frac{1}{q}}.
\end{equation*}
\end{theorem}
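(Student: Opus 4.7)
My approach is to test the scheme against a discretized version of the cutoff $\mathcal{X}_R$, then to convert the resulting $L^1$-type tail identity into the announced $L^r$-bound via the $L^\infty$-stability of the scheme.

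First, the implicit scheme \eqref{defNumSch} enjoys the standard discrete maximum principle $\sup_{\beta,j}|U^j_\beta|\leq M_{u_0,g}$, which is inherited by the interpolant, so $|\Uxt_h(x,t)|^r\leq M_{u_0,g}^{r-1}|\Uxt_h(x,t)|$. This reduces matters to the $L^1$-tail bound, and because $\Uxt_h$ is piecewise linear in $t$ it is enough to bound $\int_{|x|>R}|\Ux^j|\dd x$ uniformly in $j\in\J$. The inequality $\mathbf{1}_{|x|>R}\leq\mathcal{X}_R$ from \eqref{antiCutOff2} lets me work with $\int|\Ux^j|\mathcal{X}_R\dd x$ or, equivalently (up to a harmless cell-averaging correction between $\mathcal{X}_R$ and its piecewise-constant projection on the grid), the discrete quantity $S^j:=\sum_\beta h^N|U^j_\beta|\mathcal{X}_R(x_\beta)$.

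Next, I multiply \eqref{defNumSch} by $h^N\mathrm{sgn}(U^j_\beta)\mathcal{X}_R(x_\beta)$ and sum in $\beta$. Combining the elementary inequality $|a|-|b|\leq\mathrm{sgn}(a)(a-b)$ on the time-difference, the identity $\mathrm{sgn}(U^j_\beta)=\mathrm{sgn}(\varphi(U^j_\beta))$ (valid a.e.\ because $\varphi$ is nondecreasing), the discrete Kato inequality $\mathrm{sgn}(\varphi(U^j_\beta))\Levy^h[\varphi(U^j)]_\beta\leq\Levy^h[|\varphi(U^j)|]_\beta$, and the summation-by-parts formula (valid since $\omega_{-\gamma,h}=\omega_{\gamma,h}$ by \eqref{Dmuas}), I obtain the recursion
\[
S^j\leq S^{j-1}+\Delta t_j\sum_\beta h^N|\varphi(U^j_\beta)|\Levy^h[\mathcal{X}_R]_\beta+\Delta t_j\sum_\beta h^N|G^j_\beta|\mathcal{X}_R(x_\beta).
\]
The diffusive middle term is bounded by Hölder's inequality in $\ell^p_h$--$\ell^q_h$ as $\|\varphi(U^j)\|_{\ell^q_h}\cdot\|\Levy^h[\mathcal{X}_R]\|_{\ell^p_h}$, where by construction $\|\Levy^h[\mathcal{X}_R]\|_{\ell^p_h}=\|\Levy^h[\mathcal{X}_R]\|_{L^p(\R^N)}$. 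Using $|\varphi(s)|\leq|\varphi|_{C^{0,\Hvar}}|s|^{\Hvar}$ (wlog $\varphi(0)=0$), the condition $\Hvar q\geq 1$ (ensured by $q\geq 1/\Hvar$), and the standard $L^\infty\cap L^1$-stability of the scheme $\|\Ux^j\|_{L^\infty}\leq M_{u_0,g}$ together with $\|\Ux^j\|_{L^1}\leq\|u_0\|_{L^1}+\|g\|_{L^1(Q_T)}$, I rewrite $|U^j|^{\Hvar q}=|U^j|^{\Hvar q-1}|U^j|$ and recover precisely $\|\varphi(U^j)\|_{\ell^q_h}\leq C_{\varphi,u_0,g}$.

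Iterating the recursion from $j=1$ to the target index, using $\sum_j\Delta t_j\leq T$, and translating the sums involving $U^0_\beta$ and $G^j_\beta$ back to integrals via \eqref{discData} assembles the announced estimate. \emph{The main obstacle} is the second step: justifying summation by parts and the discrete Kato inequality against the \emph{non--compactly supported} weight $\mathcal{X}_R$. Absolute convergence of all sums must be established, and this relies on $\Levy^h[\mathcal{X}_R]\in L^p(\R^N)$ (a consequence of the smoothness of $\mathcal{X}$ and of \eqref{Dmuas}, to be verified in the nonlocal-estimates appendix) together with $\varphi(U^j)\in\ell^q_h$ (from the $L^1\cap L^\infty$-stability). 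A secondary technicality is controlling the small discrepancy between $\mathcal{X}_R(x_\beta)$ and $\frac{1}{h^N}\int_{x_\beta+R_h}\mathcal{X}_R\dd x$ so that the final right-hand side contains the exact norms $\|u_0\mathcal{X}_R\|_{L^1(\R^N)}$ and $\|g\mathcal{X}_R\|_{L^1(0,T;L^1(\R^N))}$.
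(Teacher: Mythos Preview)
Your approach is essentially the paper's: a Kato inequality on the nonlinear term, self-adjointness of $\Levy^h$ to move the operator onto $\mathcal{X}_R$, H\"older with exponents $p,q$, and iteration in $j$, followed by the $L^\infty$-bound to pass from $L^1$ to $L^r$. The one structural difference is that the paper first invokes the equivalence between the fully discrete scheme and a \emph{space-continuous} scheme (the piecewise constant interpolant $\overline{U^j}$ solves $U^j(x)=U^{j-1}(x)+\Delta t_j\Levy^h[\varphi(U^j)](x)+\Delta t_jG^j(x)$ for all $x\in\R^N$). Working with genuine integrals over $\R^N$ rather than with $\ell^p_h$-sums removes both of the technicalities you flag: the cell-averaging discrepancy between $\mathcal{X}_R(x_\beta)$ and $h^{-N}\int_{x_\beta+R_h}\mathcal{X}_R$ never arises, and the quantity that appears after H\"older is the honest continuous norm $\|\Levy^h[\mathcal{X}_R]\|_{L^p(\R^N)}$ of the function $x\mapsto\sum_{\gamma\neq0}(\mathcal{X}_R(x+z_\gamma)-\mathcal{X}_R(x))\omega_{\gamma,h}$. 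Note that your claimed identity $\|\Levy^h[\mathcal{X}_R]\|_{\ell^p_h}=\|\Levy^h[\mathcal{X}_R]\|_{L^p(\R^N)}$ is not literally true, since $\mathcal{X}_R$ is smooth and $\Levy^h[\mathcal{X}_R]$ is not piecewise constant on the grid; the space-continuous reformulation sidesteps this entirely. The paper also derives the Kato inequality pointwise first (obtaining $|w|\leq|\rho|+\Delta t_j\Levy^h[|\varphi(w)|]$ before integrating), which makes the subsequent integration against $\mathcal{X}_R$ and the Tonelli/change-of-variables ``summation by parts'' a bit more transparent than multiplying by $\mathrm{sgn}(U^j_\beta)\mathcal{X}_R(x_\beta)$ and summing.
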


\begin{remark}
We interpret $1/(\Hvar-1)=\infty$ when $\Hvar=1$.
\end{remark}

The proof, see Section \ref{sec:ProofOfEquitightness}, basically
consists of choosing $\mathcal{X}_R$ as a test function in Definition
\ref{def:VeryWeak}. Since $\varphi(u)\in L^q(Q_T)$ we need to rely on
$\Levy^h[\mathcal{X}_R]\to0$ in $L^p(\R^N)$
as $R\to\infty$. This puts a restriction on the class of operators, as
can be seen in the next result. To write down the result properly, we
will consider weights satisfying one of the following:
\begin{align}\label{Dmuas'}\tag{$\textup{A}_{\omega_h}'$} 
&\text{For all $h>0$ and $\gamma\neq0$, }\omega_{\gamma,h}=\omega_{-\gamma,h}\geq0\text{ and}
\nonumber\\ 
&
\sup_{0<h<1}\bigg\{\sum_{h\leq |z_\gamma|\leq 1}|z_\gamma|^2\omega_{\gamma,h}+\sum_{|z_\gamma|>1}|z_\gamma|\omega_{\gamma,h}\bigg\}<\infty\nonumber.
\end{align}
\begin{align}\label{DmuasFrac}\tag{$\textup{A}_{\omega_h}''$} 
&\text{In addition to \eqref{Dmuas}, also assume that, for $\alpha\in(0,2)$,}
\nonumber\\ 
&\sup_{R>1}\sup_{0<h<1}\bigg\{R^{\alpha-2}\sum_{1<|z_\gamma|\leq R}|z_\gamma|^2\omega_{\gamma,h}+R^\alpha\sum_{|z_\gamma|>R}\omega_{\gamma,h}\bigg\}\leq C.\nonumber
\end{align}

\begin{corollary}[Equitightness]\label{tailControlInLimitGPME2}
Assume \eqref{phias}, \eqref{gas}, \eqref{u_0as}, and $r\in[1,\infty)$. Let $U_\beta^j$ be an a.e.-solution of \eqref{defNumSch} corresponding to $U_\beta^0$ and $G_\beta^j$, and $\Uxt_h$ the corresponding interpolant. Then
\begin{equation*}
\begin{split}
\lim_{R\to\infty}\sup_{0<h<1}\sup_{t\in[0,T]}\int_{|x|> R}|\Uxt_h(x,t)|^r\dd x=0
\end{split}
\end{equation*}
if any of the following holds:
\begin{enumerate}[{\rm (i)}]
\item When $\Levy^h=\Delta_h$ (with weights satisfying \eqref{Dmuas}) and $\frac{(N-2)^+}{N}<\Hvar\leq 1$.
\item When $\Levy^h$ has weights satisfying \eqref{Dmuas'} and $\frac{N-1}{N}<\Hvar\leq 1$.
\item When $\Levy^h$ has weights satisfying \eqref{DmuasFrac} and $\frac{(N-\alpha)^+}{N}<\Hvar\leq1$.
\end{enumerate}
\end{corollary}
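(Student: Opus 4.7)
The plan is to apply Theorem \ref{tailControlGPME2} and show that each of the three terms on its right-hand side vanishes as $R\to\infty$, uniformly in $h\in(0,1)$, for a suitable choice of $p\in(1,1/(1-\Hvar)]$. Concretely, I will take $p>N/2$ in case (i), $p>N$ in case (ii), and $p>N/\alpha$ in case (iii); the stated lower bounds on $\Hvar$ in the corollary are exactly what is needed for such a $p$ to be compatible with the constraint $p\leq 1/(1-\Hvar)$ coming from the theorem.

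The two integral terms $\|u_0\mathcal{X}_R\|_{L^1(\R^N)}$ and $\|g\mathcal{X}_R\|_{L^1(0,T;L^1(\R^N))}$ tend to zero as $R\to\infty$ by dominated convergence, using $0\leq\mathcal{X}_R\leq 1$, $\mathcal{X}_R\to 0$ pointwise from Lemma \ref{convDAntiCutOff}(a), together with \eqref{u_0as} and \eqref{gas}. The core work is therefore to show $\|\Levy^h[\mathcal{X}_R]\|_{L^p(\R^N)}\to 0$ as $R\to\infty$, uniformly in $0<h<1$. Exploiting the symmetry $\omega_{\gamma,h}=\omega_{-\gamma,h}$, I would rewrite
\begin{equation*}
\Levy^h[\mathcal{X}_R](x) = \tfrac{1}{2}\sum_{\gamma\neq 0}\bigl(\mathcal{X}_R(x+z_\gamma)+\mathcal{X}_R(x-z_\gamma)-2\mathcal{X}_R(x)\bigr)\omega_{\gamma,h},
\end{equation*}
apply Minkowski's inequality to pass the $L^p$ norm inside the sum, and use the three bounds on the symmetric second difference
\begin{equation*}
\bigl\|\mathcal{X}_R(\cdot+z)+\mathcal{X}_R(\cdot-z)-2\mathcal{X}_R\bigr\|_{L^p(\R^N)} \leq C\min\bigl\{|z|^2\|D^2\mathcal{X}_R\|_{L^p},\ |z|\|D\mathcal{X}_R\|_{L^p},\ R^{N/p}\bigr\},
\end{equation*}
where the last estimate follows from $\mathcal{X}_R-1\in C_\textup{c}^\infty(\R^N)$ being supported in the ball of radius $R$ with $\|\mathcal{X}_R-1\|_{L^p}\leq CR^{N/p}$.

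Case (i) is then immediate: for $\Delta_h$ only weights with $|z_\gamma|=h\leq 1$ appear, so the second-order estimate combined with \eqref{Dmuas} and Lemma \ref{convDAntiCutOff}(b) gives $\|\Delta_h[\mathcal{X}_R]\|_{L^p}=O(R^{N/p-2})$. For case (ii), I would split at $|z_\gamma|=1$: the part $|z_\gamma|\leq 1$ is handled as in case (i) via \eqref{Dmuas}, while the part $|z_\gamma|>1$ is bounded by $\|D\mathcal{X}_R\|_{L^p}\sum_{|z_\gamma|>1}|z_\gamma|\omega_{\gamma,h}=O(R^{N/p-1})$ via \eqref{Dmuas'}. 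For case (iii), the split is done at the moving scale $|z_\gamma|=R$: the part $|z_\gamma|\leq R$ is bounded by $\|D^2\mathcal{X}_R\|_{L^p}\sum_{|z_\gamma|\leq R}|z_\gamma|^2\omega_{\gamma,h}=O(R^{N/p-2}\cdot R^{2-\alpha})=O(R^{N/p-\alpha})$, while the part $|z_\gamma|>R$ is bounded by $CR^{N/p}\sum_{|z_\gamma|>R}\omega_{\gamma,h}=O(R^{N/p-\alpha})$; both steps use \eqref{DmuasFrac}. In each case the resulting exponent of $R$ is strictly negative for the chosen $p$, so Lemma \ref{convDAntiCutOff}(b) yields the desired decay.

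The main obstacle is case (iii), where one must truncate the nonlocal interaction at the cut-off scale $R$ itself rather than at a fixed radius, and carefully match the quadratic decay of the symmetric second difference against the fractional growth of the truncated moments of the weights so that both sub-sums end up with the same exponent $R^{N/p-\alpha}$. The other two cases fall out as simpler incarnations of this dichotomy, and in each case translating the constraint $p\leq 1/(1-\Hvar)$ into a lower bound on $\Hvar$ recovers precisely the thresholds $(N-2)^+/N$, $(N-1)/N$, and $(N-\alpha)^+/N$ in the statement.
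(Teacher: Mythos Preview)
Your proposal is correct and follows essentially the same approach as the paper. The paper packages the operator estimates into auxiliary lemmas in the appendix (Lemma~\ref{lem:estimateDiscNonlocal} applied to $1-\mathcal{X}_R$, and Corollaries~\ref{wellPosednessLaplaceAntiCutOff} and~\ref{wellPosednessLevyAntiCutOff}) and then chooses $p=1/(1-\Hvar)$, whereas you derive the same bounds inline via the symmetric second-difference form and phrase the constraint as the existence of an admissible $p$; the splitting strategies (at $|z_\gamma|=1$ in case~(ii), at $|z_\gamma|=R$ in case~(iii)) and the resulting thresholds on $\Hvar$ are identical.
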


\begin{remark}
\begin{enumerate}[{\rm (a)}]
\item More generally, we also have equitightness when $\Levy^h$ has weights satisfying \eqref{Dmuas} and
$$
\sup_{0<h<1}\bigg\{R^{\frac{N}{p}-2}\sum_{1<|z_\gamma|\leq R}|z_\gamma|^2\omega_{\gamma,h}+R^{\frac{N}{p}}\sum_{|z_\gamma|>R}\omega_{\gamma,h}\bigg\}=o_R(1).
$$
\item Let us also provide the counterpart of \eqref{Dmuas'} and \eqref{DmuasFrac} for $\mu$:
\begin{align}\label{muas'}\tag{$\textup{A}_{\mu}'$} 
&\mu \text{ is a nonnegative symmetric Radon measure on }\R^N\setminus\{0\}\text{ satisfying}\nonumber\\ 
&\int_{0<|z|\leq1}|z|^2\dd \mu(z)+\int_{|z|>1}|z|\dd\mu(z)<\infty,\nonumber
\end{align}
and
\begin{align}\label{muasFrac}\tag{$\textup{A}_{\mu}''$} 
&\mu \text{ satisfies \eqref{muas} and, for $\alpha\in(0,2)$,}\nonumber\\ 
&\sup_{R>1}\bigg\{R^{\alpha-2}\int_{1<|z|\leq R}|z|^2\dd \mu(z)+R^\alpha\int_{|z|>R}\dd \mu(z)\bigg\}\leq C.\nonumber
\end{align}
The latter is satisfied for e.g.  $\mu$ such that \eqref{muas} holds and $\dd \mu(z)\leq
|z|^{-(N+\alpha)}\dd z$ for $|z|>1$ and some $\alpha\in(0,2)$. We again refer to Appendix  \ref{sec:DiscreteAndContinuousEstimatesNonlocal} for practical ways of checking \eqref{Dmuas'} and \eqref{DmuasFrac} through operator consistency (see \eqref{eq:OperatorConsistent} below) and assumptions on the limit operator $\Operator$.
\item Assumptions \eqref{muas'} and \eqref{muasFrac} are moment like conditions. They imply that $\mu\mathbf{1}_{|z|>1}$ has $s$ moments with $s=1$ or $s\in (0,\alpha)$, respectively. This is equivalent with the underlying L\'evy process having moments of order $s$, see e.g. Section 2.3 in \cite{ErJa21}.
\end{enumerate}
\end{remark}

\begin{proof}[Proof of Corollary \ref{tailControlInLimitGPME2}]
Let us summarize the direct computations done in Appendix \ref{sec:AuxRes}:
\begin{enumerate}[{\rm (i)}]
\item $\Delta_h\mathcal{X}_R\to0$ in $L^p(\R^N)$ as $R\to\infty$ if $p>\frac{N}{2}$ when $N\geq2$, and $p>1$ when $N=1$ (see Corollary \ref{wellPosednessLaplaceAntiCutOff}).
\item Under \eqref{Dmuas'}, $\Levy^h[\mathcal{X}_R]\to0$ in $L^p(\R^N)$ as $R\to\infty$ if $p>N$ (see Corollary \ref{wellPosednessLevyAntiCutOff}).
\item Under \eqref{DmuasFrac}, $\Levy^h[\mathcal{X}_R]\to0$ in $L^p(\R^N)$ as $R\to\infty$ if $p>N/\alpha$ when $N\geq 2$, $p>1/\alpha$ when $\alpha\in (0,1)$ and $N=1$, and $p>1$ when $\alpha\in[1,2)$ and $N=1$ (see Corollary \ref{wellPosednessLevyAntiCutOff} and Example \ref{ex:measure1}).
\end{enumerate}
Now, we can deduce a condition on $\Hvar$ in order to have convergence as $R\to\infty$, and then conclude by Theorem \ref{tailControlGPME2}. Choose $p=1/1-\Hvar$ (hence, $q=1/\Hvar$), and require that:
\begin{equation*}
\begin{cases}
1<\frac{1}{1-\Hvar} \quad&\qquad \text{in Case (i) for $N=1$,}\\
\frac{N}{2}<\frac{1}{1-\Hvar} \quad&\qquad \text{in Case (i) for $N\geq2$,}\\
N<\frac{1}{1-\Hvar} \quad&\qquad \text{in Case (ii) for $N\geq1$,}\\
\frac{N}{\alpha}<\frac{1}{1-\Hvar} \quad&\qquad \text{in Case (iii) for $N\geq2$,}\\
\frac{1}{\alpha}<\frac{1}{1-\Hvar} \quad&\qquad \text{in Case (iii) for $N=1$ and $\alpha\in(0,1)$,}\\
1<\frac{1}{1-\Hvar} \quad&\qquad \text{in Case (iii) for $N=1$ and $\alpha\in[1,2)$.}\\
\end{cases}
\end{equation*}
Hence, we get a restriction on the lower bound of $\Hvar$:
\begin{equation*}
\begin{cases}
0<\Hvar \quad&\qquad \text{in Case (i) for $N=1$,}\\
\frac{N-2}{N}<\Hvar \quad&\qquad \text{in Case (i) for $N\geq2$,}\\
\frac{N-1}{N}<\Hvar \quad&\qquad \text{in Case (ii) for $N\geq1$,}\\
\frac{N-\alpha}{N}<\Hvar \quad&\qquad \text{in Case (iii) for $N\geq2$,}\\
1-\alpha<\Hvar \quad&\qquad \text{in Case (iii) for $N=1$ and $\alpha\in(0,1)$,}\\
0<\Hvar \quad&\qquad \text{in Case (iii) for $N=1$ and $\alpha\in[1,2)$.}\\
\end{cases}
\end{equation*}
The proof is complete.
\end{proof}

\subsection{Compactness and convergence in $C([0,T];L^r(\R^N))$}\label{sec:CompactnessAndConvergenceResults}

We have so far checked property \eqref{sec2prop:equitight'} in Section \ref{sec:abstractCompConv}. However, properties \eqref{sec2prop:equicontspace} and \eqref{sec2prop:equiconttime'} follow from known results, see Lemma 4.5 and Lemma 4.6 in \cite{DTEnJa19}, respectively. Hence, as a consequence of the general presentation in Section \ref{sec:abstractCompConv} (and Lemma \ref{FromLocalToGlobalEquicontinuity}), we get: 

\begin{theorem}[Compactness]\label{thm:Compactness}
Assume \eqref{phias}--\eqref{muas}, and $r\in[1,\infty)$. Let $\{U_\beta^j\}_{h>0}$ be a sequence of a.e.-solutions of \eqref{defNumSch} and $\{\Uxt_h\}_{h>0}$ its corresponding sequence of interpolants. Then there exists a subsequence $\{\Uxt_{h_n}\}_{n\in\N}$ and a $u\in C([0,T];L^r(\R^N))$ such that and
$$
\Uxt_{h_n}\to u \qquad\text{in $C([0,T];L^r(\R^N))$ as $n\to\infty$,}
$$
if any of the following holds:
\begin{enumerate}[{\rm (i)}]
\item When $\Levy^h=\Delta_h$ (with weights satisfying \eqref{Dmuas}) and $\frac{(N-2)^+}{N}<\Hvar\leq1$.
\item When $\Levy^h$ has weights satisfying \eqref{Dmuas'} and $\frac{N-1}{N}<\Hvar\leq1$.
\item When $\Levy^h$ has weights satisfying \eqref{DmuasFrac} and $\frac{(N-\alpha)^+}{N}<\Hvar\leq1$.
\end{enumerate}
\end{theorem}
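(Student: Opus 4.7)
The plan is to deduce Theorem \ref{thm:Compactness} by verifying the three hypotheses of the abstract compactness result Theorem \ref{metaCompactness} for the sequence $\{\Uxt_h\}_{h>0}$ and then extracting a convergent subsequence. Thanks to Lemma \ref{FromLocalToGlobalEquicontinuity} and the remark following it, it suffices to establish: (a) the uniform-in-time spatial equitightness \eqref{sec2prop:equitight'}; (b) the pointwise-in-time spatial equicontinuity \eqref{sec2prop:equicontspace} in $L^r(\R^N)$; and (c) the local-in-space time equicontinuity \eqref{sec2prop:equiconttime'}.

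For (a), I would simply invoke Corollary \ref{tailControlInLimitGPME2}, which delivers exactly \eqref{sec2prop:equitight'} in each of the three operator regimes (i)--(iii) of the theorem. For (b), I would first obtain a translation estimate in $L^1(\R^N)$ uniform in $h$ and $t\in[0,T]$ from \cite[Lemma 4.5]{DTEnJa19}. Since the scheme \eqref{defNumSch} is $L^\infty$-stable (by the usual maximum principle for implicit schemes with nondecreasing $\varphi$), the interpolation inequality
$$\|f\|_{L^r(\R^N)}\leq \|f\|_{L^\infty(\R^N)}^{1-\frac{1}{r}}\|f\|_{L^1(\R^N)}^{\frac{1}{r}}$$
promotes this $L^1$-modulus to the required $L^r$-modulus for every $r\in[1,\infty)$. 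For (c), I would cite \cite[Lemma 4.6]{DTEnJa19}, which provides precisely the local-in-space time equicontinuity in $L^1$.

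Armed with (a)--(c), Lemma \ref{FromLocalToGlobalEquicontinuity} yields the $\veps$--$\eta$ formulation of global time equicontinuity required by the Arzel\`a--Ascoli portion of Theorem \ref{metaCompactness}, with the same $L^\infty$-interpolation upgrading from $L^1$ to $L^r$ if needed. Theorem \ref{metaCompactness} then produces a subsequence $\{\Uxt_{h_n}\}_{n\in\N}$ and a limit $u\in C([0,T];L^r(\R^N))$ with $\Uxt_{h_n}\to u$ in that space.

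The genuinely novel and delicate step is (a); the three regimes in the statement correspond exactly to the three decay conditions on $\Levy^h[\mathcal{X}_R]$ in $L^p(\R^N)$ from Corollary \ref{tailControlInLimitGPME2}, which arise from balancing the H\"older exponent $\Hvar$ against the tail behaviour of the discrete L\'evy measure through the duality exponent $p=1/(1-\Hvar)$. Steps (b) and (c) are by comparison routine and already available from the numerical analysis carried out in the companion paper \cite{DTEnJa19}; the proof here is essentially an assembly of these ingredients within the clean abstract framework of Section \ref{sec:abstractCompConv}.
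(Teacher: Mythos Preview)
Your proposal is correct and follows essentially the same route as the paper: equitightness via Corollary \ref{tailControlInLimitGPME2}, spatial and local-in-time equicontinuity via Lemmas 4.5 and 4.6 of \cite{DTEnJa19}, and then assembly through Lemma \ref{FromLocalToGlobalEquicontinuity} and Theorem \ref{metaCompactness}. The only addition you make explicit---the $L^\infty$--$L^1$ interpolation to lift the moduli from $L^1$ to $L^r$---is a natural detail that the paper leaves implicit.
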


\begin{remark}
The lower bounds of $\Hvar$ are not surprising. In fact, equation \eqref{GPME} with  $\Operator=\Delta$ or $\Operator=-(-\Delta)^\frac{\alpha}{2}$, $\varphi(u)=|u|^{m-1}u$, and $g\equiv0$ will have solutions which become extinct in finite time (hence, there is no conservation of mass) when $m\in(0,\frac{(N-2)^+}{N})$ or $m\in(0,\frac{(N-\alpha)^+}{N})$ respectively  \cite{BeCr81a, DPQuRoVa12}. Those papers also prove that $\Hvar$ can reach the critical exponents $m=\frac{(N-2)^+}{N}$ or $m=\frac{(N-\alpha)^+}{N}$, but this is beyond the scope of the present paper. In effect, we include fast diffusion up to the critical exponents for both the Laplace operator and fractional Laplace like operators.
\end{remark}

In our concrete example of numerical approximations of \eqref{GPME}, Property \eqref{sec2prop:consistency} in Section \ref{sec:abstractCompConv} is a consequence of the obtained convergence $\Uxt_{h_n}\to u$ in $C([0,T];L^r(\R^N))$ as $n\to\infty$ and the fact that 
\begin{equation}\label{eq:OperatorConsistent}
\forall\,\psi\in C_\textup{c}^\infty(\R^N)\qquad  \Levy^h[\psi]\to\Operator[\psi]\qquad\text{in $L^1(\R^N)$,}
\end{equation}
see Lemma 4.8 in \cite{DTEnJa19} and its proof for further details.

\begin{corollary}[Existence]\label{cor:Existence}
Under the assumptions of Theorem \ref{thm:Compactness} and \eqref{eq:OperatorConsistent}, i.e., the schemes \eqref{defNumSch} are consistent, there exists a very weak solution of \eqref{GPME} in $L^1(Q_T)\cap L^\infty(Q_T)\cap C([0,T];L^r(\R^N))$.
\end{corollary}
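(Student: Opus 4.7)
The plan is to follow the Arzel\`a–Ascoli/Kolmogorov–Riesz template laid out in Corollary \ref{metaExistence}: Theorem \ref{thm:Compactness} already gives a subsequence $\{\Uxt_{h_n}\}_{n\in\N}$ and a limit $u \in C([0,T];L^r(\R^N))$ with $\Uxt_{h_n}\to u$ in $C_t(L_x^r)$. What remains is to verify (i) that $u$ actually lies in $L^1(Q_T) \cap L^\infty(Q_T)$, and (ii) that $u$ is a very weak solution of \eqref{GPME} in the sense of Definition \ref{def:VeryWeak}.

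For the membership, the implicit scheme \eqref{defNumSch} is monotone and mass-preserving (modulo the source), so by the standard $L^\infty$-stability proved in \cite{DTEnJa18b, DTEnJa19} one has $\|\Uxt_h\|_{L^\infty(Q_T)} \leq M_{u_0,g}$ uniformly in $h$. This passes to the limit and gives $u \in L^\infty(Q_T)$. Combined with the equitightness from Corollary \ref{tailControlInLimitGPME2} and the $L^r$-convergence, standard arguments yield $u \in C([0,T];L^1(\R^N)) \subset L^1(Q_T)$.

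For the very weak formulation, I would take $\psi \in C_\textup{c}^\infty(\R^N \times [0,T))$, test the scheme \eqref{defNumSch} against $\psi_\beta^j := \psi(x_\beta,t_j)$, multiply by $h^N$, sum over $(\beta,j)\in \Z^N \times \J$, apply discrete summation by parts in time, and use the symmetry of $\Levy^{h_n}$ to transfer the operator onto the test function. In interpolated form this produces
\[
\iint_{Q_T}\!\Uxt_{h_n}\,\dell_t^{\Dt}\psi_h\,\dd x\dd t + \iint_{Q_T}\!\varphi(\Uxt_{h_n})\,\Levy^{h_n}[\psi_h]\,\dd x\dd t + \iint_{Q_T}\!\overline{G_h}\,\psi_h\,\dd x\dd t + \int_{\R^N}\!\overline{U^0}\,\psi(x,0)\,\dd x = o_n(1),
\]
where $\psi_h$ is the obvious interpolant of $\psi$ and $\dell_t^{\Dt}$ the backward time difference (this identity is essentially Lemma 4.8 of \cite{DTEnJa19}). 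Passing $n\to\infty$: the time-derivative term converges because $\Uxt_{h_n} \to u$ in $C_t(L_x^r)$ and $\dell_t^{\Dt}\psi_h \to \dell_t\psi$ uniformly on the support of $\psi$; the data terms converge by elementary approximation of $u_0$ and $g$ from \eqref{discData}. For the diffusion term I would split
\[
\varphi(\Uxt_{h_n})\Levy^{h_n}[\psi_h] - \varphi(u)\Operator[\psi] = \bigl(\varphi(\Uxt_{h_n})-\varphi(u)\bigr)\Levy^{h_n}[\psi_h] + \varphi(u)\bigl(\Levy^{h_n}[\psi_h] - \Operator[\psi]\bigr);
\]
the first summand vanishes because $\varphi$ is continuous with $\varphi(\Uxt_{h_n})$ uniformly bounded, giving $\varphi(\Uxt_{h_n}) \to \varphi(u)$ in $L^q(Q_T)$ via dominated convergence together with the equitightness, while $\Levy^{h_n}[\psi_h]$ is bounded uniformly in $L^p(\R^N)$ by the same estimates used in Theorem \ref{tailControlGPME2}; the second summand vanishes by the operator consistency \eqref{eq:OperatorConsistent}.

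The main obstacle is controlling the diffusion term at spatial infinity, since $\Levy^{h_n}[\psi_h]$ and $\Operator[\psi]$ are only $L^p$ (not compactly supported) even though $\psi$ is. This is precisely where the equitightness of $\{\Uxt_{h_n}\}$ and the restriction on $\Hvar$ from Theorem \ref{thm:Compactness} pay off: together with the uniform $L^\infty$-bound they ensure $\varphi(\Uxt_{h_n})$ has vanishing tails in $L^q$, which is exactly what is needed to absorb the non-compact part of $\Levy^{h_n}[\psi_h]$ when taking the limit. All other estimates are routine variants of those already present in \cite{DTEnJa19}.
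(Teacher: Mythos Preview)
Your proposal is correct and follows the same route as the paper: invoke the abstract existence template (Corollary \ref{metaExistence}) with compactness supplied by Theorem \ref{thm:Compactness} and consistency supplied by the argument of Lemma 4.8 in \cite{DTEnJa19}, exactly as the paper indicates in the paragraph preceding the corollary.

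One remark on your final paragraph: you overstate the role of equitightness in the \emph{consistency} step. For $\psi\in C_\textup{c}^\infty(\R^N\times[0,T))$ the operator consistency \eqref{eq:OperatorConsistent} gives $\Levy^{h_n}[\psi]\to\Operator[\psi]$ in $L^1(\R^N)$, not merely in $L^p$. Since $\varphi(\Uxt_{h_n})$ is uniformly bounded in $L^\infty$ and converges pointwise a.e.\ (already from $C_t(L^r_{\textup{loc}})$ convergence), the product $\varphi(\Uxt_{h_n})\Levy^{h_n}[\psi]$ converges to $\varphi(u)\Operator[\psi]$ in $L^1(Q_T)$ by dominated convergence, with a constant multiple of $|\Operator[\psi]|\in L^1(\R^N)$ as the integrable majorant. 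No tail control on $\varphi(\Uxt_{h_n})$ is required for this limit identification; equitightness is what upgrades the compactness from $C([0,T];L^r_{\textup{loc}}(\R^N))$ to $C([0,T];L^r(\R^N))$, not what makes the very weak formulation pass to the limit.
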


The convergence of the whole sequence to the unique very weak solution of \eqref{GPME} follows as in the proof of Theorem \ref{metaConvergence}.

\begin{theorem}[Convergence]\label{thm:Convergence}
Under the assumptions of Corollary \ref{cor:Existence} and that the limit problem has a unique solution,
$$
\Uxt_{h}\to u \qquad \text{in $C([0,T];L^r(\R^N))$ as $h\to0^+$,}
$$
where $u$ is the unique very weak solution of \eqref{GPME} with data $u_0,g$.
\end{theorem}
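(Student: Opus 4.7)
The plan is to mirror exactly the abstract contradiction argument laid out for Proposition \ref{metaConvergence}, now specialized to the numerical sequence $\{\Uxt_h\}_{h>0}$ and to very weak solutions of \eqref{GPME}. All three ingredients needed there---compactness, consistent-limit identification, and uniqueness---are available: compactness comes from Theorem \ref{thm:Compactness}, the fact that any $C_t(L_x^r)$-limit is a very weak solution comes from the consistency property \eqref{eq:OperatorConsistent} (via Corollary \ref{cor:Existence}), and uniqueness of very weak solutions in $L^1(Q_T)\cap L^\infty(Q_T)$ is assumed (it holds under \eqref{phias}--\eqref{muas} by \cite[Theorem 3.1]{DTEnJa17b}).

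I would argue by contradiction. Suppose $\Uxt_h \not\to u$ in $C([0,T];L^r(\R^N))$ as $h\to 0^+$. Then there exist $\veps>0$ and a sequence $h_n\to 0^+$ such that
\[
\sup_{t\in[0,T]}\|u(\cdot,t)-\Uxt_{h_n}(\cdot,t)\|_{L^r(\R^N)}>\veps \qquad \text{for every } n\in\N.
\]
Applying Theorem \ref{thm:Compactness} to $\{\Uxt_{h_n}\}_{n\in\N}$, I extract a further subsequence (not relabeled) and a function $\tilde u\in C([0,T];L^r(\R^N))$ such that $\Uxt_{h_n}\to \tilde u$ in $C([0,T];L^r(\R^N))$. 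By Corollary \ref{cor:Existence} (which combines compactness with consistency \eqref{eq:OperatorConsistent} to pass to the limit in Definition \ref{def:VeryWeak}), $\tilde u$ is a very weak solution of \eqref{GPME} with data $u_0,g$, and moreover $\tilde u\in L^1(Q_T)\cap L^\infty(Q_T)$ thanks to the uniform $L^\infty$-bound and the $L^1$-tail control built into the arguments of Section \ref{sec:MainResults}.

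By the uniqueness assumption, $\tilde u = u$ almost everywhere, and since both lie in $C([0,T];L^r(\R^N))$ this gives $\sup_{t\in[0,T]}\|u(\cdot,t)-\Uxt_{h_n}(\cdot,t)\|_{L^r(\R^N)}\to 0$, contradicting the choice of $\veps$. Hence the whole sequence converges, which is the claim. The only point that requires some care---and which I would expect to be the main obstacle if the framework had not been prepared---is ensuring that the compactness extraction in Theorem \ref{thm:Compactness} and the identification of the limit as a very weak solution are compatible, i.e., that the same topology $C([0,T];L^r(\R^N))$ in which compactness holds is also strong enough to pass to the limit in the nonlinear term $\varphi(\Uxt_{h_n})\Levy^{h_n}[\psi]$; this is guaranteed by the uniform $L^\infty$-bound on $\Uxt_{h_n}$ together with the $\gamma$-Hölder continuity of $\varphi$ and the operator consistency \eqref{eq:OperatorConsistent}, so no additional work beyond invoking Corollary \ref{cor:Existence} is required.
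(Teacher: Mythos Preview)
Your proof is correct and follows exactly the approach the paper indicates: the paper simply states that the convergence of the whole sequence follows as in the proof of Proposition \ref{metaConvergence}, and your argument is precisely that contradiction argument specialized to the numerical family $\{\Uxt_h\}_{h>0}$, invoking Theorem \ref{thm:Compactness} for compactness, Corollary \ref{cor:Existence} for consistent-limit identification, and the assumed uniqueness.
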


\begin{remark}
Uniqueness of very weak solutions of \eqref{GPME} in $L^1(Q_T)\cap L^\infty(Q_T)$ can be found in \cite{DTEnJa17b}.
\end{remark}


\section{Proof of equitightness}
\label{sec:ProofOfEquitightness}

\subsection{Preliminaries}
Let us start by recalling some properties of $U_\beta^j$.

\begin{proposition}[{{\cite{DTEnJa19}}}]\label{preResNumSch}
Assume \eqref{phias}, \eqref{gas}, \eqref{u_0as}, \eqref{Dmuas}.
\begin{enumerate}[{\rm (a)}]
\item There exists a unique a.e.-solution $U_\beta^j$ of \eqref{defNumSch} such that
  \begin{equation*}
  \sum_{j\in\J}\sum_{\beta\neq0}|U_\beta^j|<\infty.
  \end{equation*}
\item If $U_\beta^j,\hat{U}_\beta^j$ are a.e.-solutions of \eqref{defNumSch} with data $U_\beta^0,G_\beta^j$ and $\hat{U}_\beta^0,\hat{G}_\beta^j$ respectively, then:
	\begin{enumerate}[{\rm (i)}]
	\item \textup{(Monotonicity)} If $U_\beta^0\leq \hat{U}_\beta^0$ and $G_\beta^j\leq \hat{G}_\beta^j$, then $U_\beta^j\leq \hat{U}_\beta^j$.
	\item \textup{($L^1(\Z^N)$-contraction)} 
	$$
	\sum_{\beta\neq0} (U_\beta^j-\hat{U}_\beta^j)^+
\leq \sum_{\beta\neq0} (U_\beta^0-\hat{U}_\beta^0)^+ + \sum_{l=1}^j\bigg(  \Delta t_{l}\sum_{\beta\neq0} (G_\beta^l-\hat{G}^j_\beta)^+\bigg).
	$$
		\item \textup{(Conservative)} If $\Hvar=1$, then
	$$
	\sum_{\beta\neq0}U_\beta^j=\sum_{\beta\neq0}U_\beta^0+\sum_{l=1}^j\bigg(\Delta t_l\sum_{\beta\neq0}G_\beta^l\bigg).
	$$
	\item \textup{($L^1(\Z^N)$-stability)} $\sum_{\beta\neq0}|U_\beta^j|\leq \sum_{\beta\neq0}|U_\beta^0|+\sum_{l=1}^j\big(\Delta t_l\sum_{\beta\neq0}|G_\beta^l|\big).$
	\item \textup{($L^\infty(\Z^N)$-stability)} $\sup_{\beta\neq0}|U_\beta^j|\leq \sup_{\beta\neq0}|U_\beta^0|+\sum_{l=1}^j\big(\Delta t_l\sup_{\beta\neq0}|G_\beta^l|\big).$
	\item \textup{(Equicontinuity in time)} For all compact sets $K\subset \R^N$, and some modulus of continuity $\lambda_K$,
	$$
	h^N\sum_{x_\beta\in \mathcal{G}_h\cap K}|U_\beta^j-U_\beta^{j-k}|\leq \lambda_K(t_j-t_{j-k})+|K|\int_{t_{j-k}}^{t_j}\|g(\cdot,\tau)\|_{L^\infty(\R^N)}\dd \tau.
	$$
	\end{enumerate}
\end{enumerate}
\end{proposition}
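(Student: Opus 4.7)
The plan is to handle all six items as a package by exploiting the $m$-accretive/Crandall--Tartar structure of the implicit scheme: once the implicit time step is solvable and monotone, $L^1$-contraction, conservation, and the two stability bounds fall out essentially for free, while equicontinuity in time comes from a duality argument that transfers $\Levy^h$ onto a test function.

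\textbf{Step 1: Solving the implicit step and monotonicity of the resolvent.} Write the scheme as $U^j = S_{\Delta t_j}(V)$ where $U^j$ solves $U^j - \Delta t_j\,\Levy^h[\varphi(U^j)] = V$ with $V = U^{j-1}+\Delta t_j G^j$. For fixed $h$, $\Omega_h:=\sum_{\gamma\neq 0}\omega_{\gamma,h}<\infty$ by \eqref{Dmuas}, so the equation rewrites as
\[
U_\beta^j + \Delta t_j\Omega_h\varphi(U_\beta^j) = \Delta t_j\sum_{\gamma\neq 0}\omega_{\gamma,h}\varphi(U_{\beta+\gamma}^j) + V_\beta.
\]
The scalar map $x\mapsto x+\Delta t_j\Omega_h\varphi(x)$ is continuous, strictly increasing, and surjective (identity plus a nondecreasing continuous function), hence has a continuous inverse $\Phi_h$. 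The resulting fixed-point problem $U^j = T(U^j)$ with $T(W)_\beta := \Phi_h\bigl(\Delta t_j\sum_{\gamma\neq 0}\omega_{\gamma,h}\varphi(W_{\beta+\gamma})+V_\beta\bigr)$ is order-preserving in $W$; I would build $U^j$ by monotone iteration between the constant sub- and super-solutions $\pm M^j$ of Step~3, or invoke Crandall--Liggett since $V\mapsto -\Levy^h[\varphi(V)]$ is $m$-accretive on $\ell^1(\Grid)$. The resulting solution operator $S_{\Delta t_j}$ is monotone on $\ell^1$.

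\textbf{Step 2: Properties (i)--(iv) and summability in (a).} Monotonicity (i) is just monotonicity of $S_{\Delta t_j}$ applied iteratively. For (ii), I appeal to Crandall--Tartar: $S_{\Delta t_j}$ is monotone (Step~1) and conservative in the sense that $\sum_\beta S_{\Delta t_j}(V)_\beta=\sum_\beta V_\beta$, which follows from the symmetry $\omega_{\gamma,h}=\omega_{-\gamma,h}$ giving $\sum_\beta\Levy^h[\psi]_\beta = 0$ for $\psi\in\ell^1$; hence $S_{\Delta t_j}$ is a one-sided $\ell^1$-contraction, and telescoping in $j$ plus adding the source contribution yields the stated inequality. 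Setting $\hat U\equiv 0$, $\hat G\equiv 0$ in (ii) gives (iv), whence the summability $\sum_{j,\beta}|U_\beta^j|\leq T\sup_l\|U^l\|_{\ell^1}<\infty$ in (a); uniqueness in (a) is the $\hat U=U$ case of (ii). For conservation (iii), the Lipschitz assumption $\Hvar=1$ gives $|\varphi(U^j)|\leq L|U^j|$ (with $\varphi(0)=0$), so $\varphi(U^j)\in\ell^1$ by (iv); this is what justifies the Fubini step in $\sum_\beta\Levy^h[\varphi(U^j)]_\beta=0$, and summing \eqref{defNumSch} over $\beta$ closes the argument.

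\textbf{Step 3: $L^\infty$-stability (v) and time equicontinuity (vi).} For (v), compare via (i) with the spatially constant supersolutions $\pm M^j := \pm(\|U^0\|_{\ell^\infty}+\sum_{l\leq j}\Delta t_l\|G^l\|_{\ell^\infty})$, which satisfy the scheme with equality because $\Levy^h[\varphi(\mathrm{const})]=0$. For (vi), telescope
\[
U_\beta^j-U_\beta^{j-k} = \sum_{l=j-k+1}^{j}\Delta t_l\,\Levy^h[\varphi(U^l)]_\beta + \sum_{l=j-k+1}^{j}\Delta t_l\,G_\beta^l,
\]
multiply by a smooth compactly supported weight $\chi$ adapted to $K$, and use the discrete self-adjointness $\sum_\beta \chi_\beta\Levy^h[\psi]_\beta = \sum_\beta\psi_\beta\Levy^h[\chi]_\beta$ (immediate from the symmetry of the weights) to transfer the operator onto $\chi$. \textbf{The main obstacle} is precisely this transfer with constants independent of $h$, since $\Omega_h$ may blow up as $h\to 0$. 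The remedy is the splitting $|z_\gamma|\leq 1$ versus $|z_\gamma|>1$ encoded in \eqref{Dmuas}: on the small-jump part the second-order Taylor expansion of $\chi$ absorbs the singularity via the $|z_\gamma|^2$ factor in the weight, while on the large-jump part the uniform bound $\sum_{|z_\gamma|>1}\omega_{\gamma,h}\leq C$ suffices. Combining with $\|\varphi(U^l)\|_{\ell^\infty}\leq |\varphi|_{C^{0,\Hvar}}M_{u_0,g}^{\Hvar}$ from (v) and the $L^\infty$-control on $g$ produces the desired modulus $\lambda_K$.
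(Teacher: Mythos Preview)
The paper does not prove this proposition; it is quoted from \cite{DTEnJa19}. Your outline has the right architecture, but two of the steps do not close as written.

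For (ii), the Crandall--Tartar route needs the resolvent to be \emph{integral preserving}, which you obtain from $\sum_\beta\Levy^h[\psi]_\beta=0$ for $\psi\in\ell^1$. But you then apply this with $\psi=\varphi(U^j)$, and when $\Hvar<1$ there is no reason for $\varphi(U^j)\in\ell^1$: from $|\varphi(r)|\lesssim |r|^{\Hvar}$ one only gets $|\varphi(U^j_\beta)|\lesssim|U^j_\beta|^{\Hvar}$, which need not be summable even though $U^j\in\ell^1\cap\ell^\infty$. You notice exactly this obstruction when justifying (iii) under the Lipschitz hypothesis, yet the same issue silently blocks your derivation of (ii) in the genuinely H\"older case (and the alternative ``$m$-accretive on $\ell^1$'' claim runs into the identical Fubini obstruction). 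The standard cures---approximating $\varphi$ by Lipschitz nonlinearities and passing to the limit, or a Kato-type argument arranged so that only nonnegative terms are summed and Tonelli replaces Fubini---are absent from the sketch.

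For (vi), testing the telescoped identity against a fixed smooth cutoff $\chi$ and moving $\Levy^h$ onto $\chi$ by self-adjointness only produces a \emph{weak} time estimate, i.e.\ a bound on $\big|\sum_\beta\chi_\beta(U^j_\beta-U^{j-k}_\beta)\big|$, not on $h^N\sum_{x_\beta\in K}|U^j_\beta-U^{j-k}_\beta|$. Upgrading to the stated strong $L^1(K)$ bound requires a Kru\v{z}kov-type interpolation: mollify in space so that $\Levy^h$ lands on $\rho_\delta$, control $\|U^l-U^l*\rho_\delta\|_{L^1}$ via the \emph{spatial} equicontinuity of the scheme, and optimise over $\delta$. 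That mechanism---visible in the paper's own proof of Proposition~\ref{equicontinuousTimeEdiff}---is the missing ingredient in your Step~3; the $|z_\gamma|\le1$/$|z_\gamma|>1$ splitting you describe is correct but only furnishes the constant in the weak estimate.
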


\begin{remark}
These properties can of course be reformulated in terms of the the space interpolant $\overline{U^j}$, e.g., the $L^1(\R^N)$-contraction becomes
$$
\int_{\R^N}\Big(\overline{U^j}-\overline{\hat{U}^j}\Big)^+\dd x\leq \int_{\R^N}(u_0-\hat{u}_0)^++\int_0^{t_j}\int_{\R^N}(g-\hat{g})^+\dd x\dd \tau.
$$
\end{remark}

Since our discrete operator $\Levy^h$ have weights and stencils not depending on the spatial variable $x$, we transform our space-time scheme \eqref{defNumSch} into the following scheme only discretized in time:
\begin{equation}\label{defNumSchTime}
\begin{split}
U^j(x)&=U^{j-1}(x)+\Delta t_j\Levy^{h}[\varphi(U^j)](x)+\Delta t_jG^j(x)\quad\forall j\in\J.
\end{split}
\end{equation}
Indeed:

\begin{lemma}\label{lem:equivalenceNumSchNumSchTime} 
Assume \eqref{phias}, \eqref{gas}, \eqref{u_0as}, \eqref{Dmuas}, and let
$U_\beta^0$, $G_\beta^j$ be defined by \eqref{discData} and 
$\overline{U^0}$, $\overline{G^j}$ be the same functions interpolated in space, and let
$\Levy^h$ have $x$-independent stencils.
If $U_\beta^j$ is a solution of \eqref{defNumSch} with data $U^0_\beta$ and $G_\beta^j$, then $\overline{U^j}(x)$ is a piecewise constant solution of \eqref{defNumSchTime} with data $\overline{U^0}$ and $\overline{G^j}$. 
\end{lemma}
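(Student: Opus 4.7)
The statement is essentially a translation-invariance identity: because the stencils $\{z_\gamma\}$ and weights $\{\omega_{\gamma,h}\}$ of $\Levy^h$ do not depend on the base point, evaluating the continuous scheme on a piecewise constant function on the grid reduces cell-by-cell to the discrete scheme. My plan is therefore to fix an arbitrary $x\in \R^N$ lying in some cell $x_\beta+R_h$ (these cells partition $\R^N$ up to a null set), and to verify that each term in \eqref{defNumSchTime} evaluated at $x$ coincides with the corresponding term of \eqref{defNumSch} at the index $\beta$.

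\textbf{Key steps.} First, from the definition of the piecewise constant interpolant I would record $\overline{U^j}(x)=U^j_\beta$, $\overline{U^{j-1}}(x)=U^{j-1}_\beta$, and $\overline{G^j}(x)=G^j_\beta$ for $x\in x_\beta+R_h$; since $\varphi$ is applied pointwise, also $\varphi(\overline{U^j})(x)=\varphi(U^j_\beta)$ on the same cell. Second, I would handle the nonlocal term: because every stencil point satisfies $z_\gamma=h\gamma\in \Grid$, the translated point $x+z_\gamma$ belongs to $x_\beta+z_\gamma+R_h=x_{\beta+\gamma}+R_h$, so $\varphi(\overline{U^j})(x+z_\gamma)=\varphi(U^j_{\beta+\gamma})$. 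Inserting this into the definition of $\Levy^h$ gives
\[
\Levy^h[\varphi(\overline{U^j})](x)=\sum_{\gamma\neq0}\bigl(\varphi(U^j_{\beta+\gamma})-\varphi(U^j_\beta)\bigr)\omega_{\gamma,h}=\Levy^h[\varphi(U^j_\cdot)]_\beta,
\]
where the absolute convergence of the series (and hence the right to manipulate it termwise) is inherited from the $\ell^1$-summability asserted in Proposition \ref{preResNumSch}(a) together with the H\"older continuity of $\varphi$. Third, I would substitute these identities into \eqref{defNumSchTime} and observe that the resulting pointwise equation on $x_\beta+R_h$ is exactly \eqref{defNumSch} multiplied by $\mathbf{1}_{x_\beta+R_h}(x)$. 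Summing over $\beta$ recovers \eqref{defNumSchTime} almost everywhere, and piecewise constancy of $\overline{U^j}$ is built into the definition.

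\textbf{Main obstacle.} There is no real obstacle: the entire argument is bookkeeping once one notices that $x\mapsto x+z_\gamma$ maps cell $x_\beta+R_h$ onto cell $x_{\beta+\gamma}+R_h$ bijectively, which is precisely the translation invariance of the uniform grid. The only subtle point worth stating carefully is the justification for interchanging sum and pointwise evaluation inside $\Levy^h$, which I would dispatch by quoting the $\ell^1$-summability from Proposition \ref{preResNumSch}(a) combined with $\sum_{\gamma\neq 0}\omega_{\gamma,h}<\infty$ for the constant part, and by the local H\"older bound on $\varphi$ applied to the bounded sequence $\{U^j_{\beta+\gamma}\}_\gamma$.
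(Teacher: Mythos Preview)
Your proposal is correct and is essentially an explicit unpacking of the translation-invariance argument that the paper only states in one line (deferring to \cite[Proposition~2.10(b)]{DTEnJa19}). The one minor difference is that the paper's sketch also invokes \emph{uniqueness} of $U_\beta^j$, which suggests the slightly indirect route ``the unique solution of \eqref{defNumSchTime} restricted to grid points solves \eqref{defNumSch}, hence equals $U_\beta^j$ by uniqueness''; your direct plug-in verification avoids this detour entirely and is cleaner for the statement as written. One small quibble: you write $z_\gamma=h\gamma$, but the paper only assumes $z_\gamma\in\Grid$; this does not affect your argument, since all you use is that $x_\beta+z_\gamma\in\Grid$, which still maps the cell $x_\beta+R_h$ onto another grid cell.
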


The proof follows by uniqueness of $U_\beta^j$ and translation invariance in $x$ of \eqref{defNumSch}, see Proposition 2.10 (b) in \cite{DTEnJa19}. It is more convenient to work with solutions of \eqref{defNumSchTime} since they are defined for all spatial points in $\R^N$. Finally, any result will then be inherited by $\Uxt$. To ease the notation, $U^j$ will denote solutions of \eqref{defNumSchTime} and its time interpolant as $\widetilde{U}$.

\subsection{Proof of the equitightness estimates}

We can write \eqref{defNumSchTime} as
\begin{equation*}
\begin{split}
U^j(x)-\Delta t_j\Levy^{h}[\varphi(U^j)](x)=U^{j-1}(x)+\Delta t_jG^j(x)\quad\forall j\in\J.
\end{split}
\end{equation*}
That is, by following \cite{DTEnJa19}, we define
\begin{equation*}
\Ti[\rho](x):=w(x)\qquad \text{for $x\in\R^N$},
\end{equation*} 
where $w$ is the solution of the nonlinear elliptic equation
\begin{equation}\label{EP}\tag{EP}
w(x)-\Delta t_j\Levy^{h}[\varphi(w)](x)=\rho(x)\qquad \text{for $x\in\R^N$}.
\end{equation}
Then the solution of the scheme \eqref{defNumSchTime} can be written as
\begin{equation*}
U^j(x)=\Ti\big[U^{j-1}+\Delta t_jG^j\big](x)\qquad \text{for $x\in\R^N$}.
\end{equation*}

\begin{proposition}[{{\cite[Section 4]{DTEnJa19}}}]\label{prop:AprioriTi}
Assume \eqref{phias}. If $\rho\in L^1(\R^N)\cap L^\infty(\R^N)$, then there exists a unique a.e.-solution $\Ti[\rho]=w\in L^1(\R^N)\cap L^\infty(\R^N)$ of \eqref{EP}. Moreover, for $\rho,\hat{\rho}\in L^1(\R^N)\cap L^\infty(\R^N)$:
\begin{enumerate}[{\rm (a)}]
\item \textup{(Comparison)} If $\rho\leq \hat{\rho}$ a.e., then $\Ti[\rho]\leq
\Ti[\hat{\rho}]$ a.e.
\item  \textup{($L^1(\R^N)$-contraction)}
$\int_{\R^N}(\Ti[\rho](x)-\Ti[\hat{\rho}](x))^+\dd x\leq\int_{\R^N}(\rho(x)-\hat{\rho}(x))^+\dd x$.
\item \textup{($L^p(\R^N)$-bound)} For $p\in[1,\infty]$, $\|\Ti[\rho]\|_{L^p(\R^N)}\leq \|\rho\|_{L^\infty(\R^N)}^{(p-1)/p}\|\rho\|_{L^1(\R^N)}^{1/p}$.
\end{enumerate}
\end{proposition}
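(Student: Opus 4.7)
The plan is to derive (a)-(c) from a single Kato-type inequality for the discrete nonlocal operator $\Levy^h$, obtain uniqueness as a byproduct, and handle existence by approximation. Let $w_i=\Ti[\rho_i]$ for $i=1,2$, set $W:=w_1-w_2$ and $V:=\varphi(w_1)-\varphi(w_2)$, and subtract the two equations \eqref{EP}. Since $\varphi$ is nondecreasing, a case-by-case check gives $\sgn^+(W)\,V=V^+$ pointwise, so for every $x\in\R^N$
\begin{align*}
\sgn^+(W(x))\,\Levy^h[V](x)
&=\sum_{\gamma\neq 0}\omega_{\gamma,h}\bigl(\sgn^+(W(x))V(x+z_\gamma)-V(x)^+\bigr)\\
&\leq\sum_{\gamma\neq 0}\omega_{\gamma,h}\bigl(V(x+z_\gamma)^+-V(x)^+\bigr)=\Levy^h[V^+](x).
\end{align*}
Symmetry of the weights $\omega_{\gamma,h}=\omega_{-\gamma,h}$ together with translation invariance of Lebesgue measure give $\int_{\R^N}\Levy^h[V^+]\,\dd x=0$. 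Multiplying the subtracted version of \eqref{EP} by $\sgn^+(W)$ and integrating therefore yields the master estimate
$$\int_{\R^N}(w_1-w_2)^+\,\dd x\leq \int_{\R^N}(\rho_1-\rho_2)^+\,\dd x,$$
which is (b) and trivially implies (a) since $\rho_1\leq\rho_2$ kills the right-hand side. Uniqueness of $\Ti[\rho]$ follows by taking $\rho_1=\rho_2$.

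For (c), note that any constant $c\in\R$ satisfies $\Levy^h[\varphi(c)]\equiv 0$, so the constant function $c$ is an a.e.-solution of \eqref{EP} with right-hand side $c$. Comparison with $c=\pm\|\rho\|_{L^\infty(\R^N)}$ then yields $\|\Ti[\rho]\|_{L^\infty(\R^N)}\leq\|\rho\|_{L^\infty(\R^N)}$, while (b) applied with $\hat\rho\equiv 0$ (so that $\Ti[0]=0$) gives $\|\Ti[\rho]\|_{L^1(\R^N)}\leq\|\rho\|_{L^1(\R^N)}$. The bound in (c) is then the crude interpolation
$$\int_{\R^N}|\Ti[\rho]|^p\,\dd x\leq \|\Ti[\rho]\|_{L^\infty(\R^N)}^{p-1}\|\Ti[\rho]\|_{L^1(\R^N)}\leq \|\rho\|_{L^\infty(\R^N)}^{p-1}\|\rho\|_{L^1(\R^N)},$$
followed by taking the $p$-th root; the cases $p=1$ and $p=\infty$ are already covered.

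The main obstacle is existence of $\Ti[\rho]$ in $L^1(\R^N)\cap L^\infty(\R^N)$. My plan is to regularize $\varphi$ to a strictly increasing Lipschitz $\varphi_\veps$, solve the regularized equation on a large periodic box of side $R$ via the Browder-Minty theorem (for each fixed $h>0$, assumption \eqref{Dmuas} forces $\sum_\gamma\omega_{\gamma,h}<\infty$, so the operator $w\mapsto w-\Delta t_j\Levy^h[\varphi_\veps(w)]$ is bounded, monotone, coercive, and hemicontinuous on $L^2$ of the box), and pass to the limits $R\to\infty$ and $\veps\to 0^+$ using the uniform $L^1$ and $L^\infty$ estimates from the preceding paragraph together with the contraction (b). Two technical subtleties deserve attention: first, justifying $V^+\in L^1(\R^N)$ so that Fubini applies in the derivation of $\int_{\R^N}\Levy^h[V^+]\,\dd x=0$, which is automatic once $w_1,w_2\in L^1\cap L^\infty$ and $\varphi$ is $\Hvar$-H\"older; and second, equitightness of the periodic approximations as $R\to\infty$, which is essentially a stationary analogue of the parabolic equitightness that the rest of the paper is devoted to.
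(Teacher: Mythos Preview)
The paper does not prove this proposition; it is quoted from \cite[Section 4]{DTEnJa19} and no argument is reproduced here, so there is no in-paper proof to compare against. Your outline---Kato inequality for $\Levy^h$, contraction $\Rightarrow$ comparison $\Rightarrow$ uniqueness, comparison with constants for the $L^\infty$-bound, interpolation for (c), and Browder--Minty on a periodic box plus approximation for existence---is the standard route and matches what the cited reference does.

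There is one genuine gap. You claim that $V^+\in L^1(\R^N)$ ``is automatic once $w_1,w_2\in L^1\cap L^\infty$ and $\varphi$ is $\Hvar$-H\"older''. This is false when $\Hvar<1$: H\"older continuity only gives $|V|\leq C|W|^{\Hvar}$, and $W\in L^1\cap L^\infty$ does \emph{not} force $|W|^{\Hvar}\in L^1$. For instance, take $W(x)=(1+|x|)^{-N-\delta}$ with $0<\delta<N(1-\Hvar)/\Hvar$; then $W\in L^1\cap L^\infty$ but $|W|^{\Hvar}=(1+|x|)^{-\Hvar(N+\delta)}\notin L^1$ since $\Hvar(N+\delta)<N$. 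Without $V^+\in L^1$ the Fubini step behind $\int_{\R^N}\Levy^h[V^+]\,\dd x=0$ is not justified.

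The fix is already implicit in your existence plan: prove (b) first for the Lipschitz regularization $\varphi_\veps$, where $|V_\veps|\leq L_{\varphi_\veps}|W_\veps|\in L^1$ genuinely holds and your argument goes through verbatim; the resulting inequality $\int(w_{1,\veps}-w_{2,\veps})^+\leq\int(\rho_1-\rho_2)^+$ is uniform in $\veps$ and passes to the limit along with existence. You should route the proof of (b) through this approximation rather than asserting $V^+\in L^1$ directly.
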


We then deduce a tail estimate for $\Ti[\rho]$.

\begin{lemma}\label{lem:TailControlTi}
Under the assumptions of Proposition \ref{prop:AprioriTi},
\begin{equation}\label{tailControlTi}
\begin{split}
\int_{\R^N}|\Ti[\rho]|\mathcal{X}_R\dd x\leq\int_{\R^N}|\rho|\mathcal{X}_R\dd x+\Delta t_j\int_{\R^N}|\varphi(\Ti[\rho])|\Levy^h[\mathcal{X}_R]\dd x.
\end{split}
\end{equation}
\end{lemma}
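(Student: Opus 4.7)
The plan is to derive the pointwise inequality
$$|w|(x)\leq |\rho(x)|+\Delta t_j\,\Levy^h[|\varphi(w)|](x),\qquad w:=\Ti[\rho],$$
and then multiply by the nonnegative weight $\mathcal{X}_R$ and transfer $\Levy^h$ from $|\varphi(w)|$ onto $\mathcal{X}_R$ using the symmetry of the weights.

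First, I would invoke the discrete Kato-type inequality
$$\sgn(w(x))\,\Levy^h[\varphi(w)](x)\leq \Levy^h[|\varphi(w)|](x).$$
This follows from the fact that $\varphi$ is nondecreasing with $\varphi(0)=0$, which implies $\sgn(w)\,\varphi(w)=|\varphi(w)|$, together with the trivial bound $\sgn(w(x))\,\varphi(w(x+z_\gamma))\leq |\varphi(w)(x+z_\gamma)|$ and the nonnegativity $\omega_{\gamma,h}\geq 0$. Since $w$ solves \eqref{EP}, multiplying by $\sgn(w(x))$ and using $\sgn(w)\,\rho\leq|\rho|$ yields the pointwise inequality stated above.

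Next, I multiply this pointwise inequality by $\mathcal{X}_R(x)\geq 0$ and integrate over $\R^N$, which gives
$$\int_{\R^N}|w|\mathcal{X}_R\dd x \leq \int_{\R^N}|\rho|\mathcal{X}_R\dd x + \Delta t_j\int_{\R^N}\Levy^h[|\varphi(w)|]\,\mathcal{X}_R\dd x.$$
Then I apply a discrete integration by parts: using the symmetry $\omega_{\gamma,h}=\omega_{-\gamma,h}$ and Fubini's theorem, one has
$$\int_{\R^N}\Levy^h[|\varphi(w)|]\,\mathcal{X}_R\dd x = \int_{\R^N}|\varphi(w)|\,\Levy^h[\mathcal{X}_R]\dd x,$$
which produces precisely \eqref{tailControlTi}.

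The main technical obstacle is justifying the integration by parts, since $\mathcal{X}_R$ is bounded but not integrable. This can be handled by writing $\mathcal{X}_R=1-(1-\mathcal{X}_R)$ where $1-\mathcal{X}_R\in C_\textup{c}^\infty(\R^N)$, so that all differences $\mathcal{X}_R(\cdot+z_\gamma)-\mathcal{X}_R(\cdot)$ are compactly supported; combined with $|\varphi(w)|\in L^\infty(\R^N)$ and the Lévy-type summability bound \eqref{Dmuas} (used together with a second-order Taylor estimate for $|z_\gamma|\leq 1$ whose odd part is killed by symmetry, and a crude $L^\infty$-bound for $|z_\gamma|>1$), this gives absolute convergence of the double sum/integral and legitimizes Fubini. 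The $L^1$-integrability of $\Levy^h[|\varphi(w)|]$ needed on the left side follows from Proposition \ref{prop:AprioriTi} together with the boundedness of $\Levy^h$ on $L^1$ under \eqref{Dmuas}.
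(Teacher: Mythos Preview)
Your proposal is correct and follows essentially the same approach as the paper: multiply \eqref{EP} by $\sgn(w)$ to obtain the pointwise Kato-type inequality $|w|\leq|\rho|+\Delta t_j\Levy^h[|\varphi(w)|]$, then multiply by $\mathcal{X}_R$, integrate, and use the symmetry of the weights to move $\Levy^h$ onto $\mathcal{X}_R$. Your treatment is in fact slightly more careful than the paper's in justifying the interchange of sum and integral (the paper simply invokes Tonelli and the change of variables, while you address the non-integrability of $\mathcal{X}_R$ via $1-\mathcal{X}_R\in C_\textup{c}^\infty$).
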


\begin{proof}
Multiply \eqref{EP} by $\sgn(w)$, and note that $w\sgn(w)=|w|$, $\rho\sgn(w)\leq |\rho|$, and $\big(\varphi(w(x+y))-\varphi(w(x))\big)\sgn(w(x))\leq \big(|\varphi(w(x+y))|-|\varphi(w(x))|\big)$. The latter is due to the fact that $\varphi$ is nondecreasing. Then
$$
\Levy^h[\varphi(w)](x)\sgn(w(x))\leq \Levy^h[|\varphi(w)|](x).
$$
Combining the above yields the pointwise estimate
$$
|w|\leq|\rho|+\Delta t_j\Levy^h[|\varphi(w)|].
$$
Then we multiply by $\mathcal{X}_R$ defined in \eqref{antiCutOff1}--\eqref{antiCutOff2} and integrate over $\R^N$ to obtain
\begin{equation*}
\begin{split}
\int_{\R^N}|w|\mathcal{X}_R\dd x\leq\int_{\R^N}|\rho|\mathcal{X}_R\dd x+\Delta t_j\int_{\R^N}\Levy^h[|\varphi(w)|]\mathcal{X}_R\dd x.
\end{split}
\end{equation*}
By Tonelli's lemma, the properties $z_{-\gamma}=-z_{\gamma}$, and the changes of variables $x\mapsto x-z_{\gamma}$ and $\gamma\mapsto-\gamma$,
\begin{equation*}
\begin{split}
\int_{\R^N}\Levy^h[|\varphi(w)|](x)\mathcal{X}_R(x)\dd x&=\int_{\R^N}\Levy^h[\mathcal{X}_R](x)|\varphi(w(x))|\dd x.\\
\end{split}
\end{equation*} 
The proof is then complete.
\end{proof}

We are then ready to prove our equitightness estimate.

\begin{proof}[Proof of Theorem \ref{tailControlGPME2}]
\noindent \textbf{1)} {\em $L^1(\R^N)$-equitightness estimate for $U^{j}$.} We simply use \eqref{tailControlTi} to get
\begin{equation*}
\begin{split}
&\int_{\R^N}|U^j|\mathcal{X}_R\dd x=\int_{\R^N}|\Ti[U^{j-1}+\Delta t_jG^j]|\mathcal{X}_R\dd x\\
&\leq\int_{\R^N}|U^{j-1}+\Delta t_jG^j|\mathcal{X}_R\dd x+\Delta t_j\int_{\R^N}|\varphi(\Ti[U^{j-1}+\Delta t_jG^j])||\Levy^h[\mathcal{X}_R]|\dd x\\
&\leq \int_{\R^N}|U^{j-1}|\mathcal{X}_R\dd x+\Delta
  t_j\int_{\R^N}|G^j|\mathcal{X}_R\dd x+\Delta
  t_j|\varphi|_{C^{0,\Hvar}}\int_{\R^N}|U^j|^{\Hvar}|\Levy^h[\mathcal{X}_R]|\dd x.
\end{split}
\end{equation*}
After iterating $j$ down to zero, the above estimate yields
\begin{equation*}
\begin{split}
&\int_{\R^N}\big|U^j\big|\mathcal{X}_R\dd x\\
&\leq\int_{\R^N}|u_0|\mathcal{X}_R\dd x+\sum_{l=1}^j\Delta t_l\int_{\R^N}|G^l|\mathcal{X}_R\dd x+|\varphi|_{C^{0,\Hvar}}\sum_{l=1}^j\Delta t_l\int_{\R^N}|U^l|^{\Hvar}|\Levy^h[\mathcal{X}_R]|\dd x\\
&\leq\int_{\R^N}|u_0|\mathcal{X}_R\dd x+\int_0^{t_j}\int_{\R^N}|g(x,\tau)|\mathcal{X}_R\dd x\dd \tau+|\varphi|_{C^{0,\Hvar}}\sum_{l=1}^j\Delta t_l\int_{\R^N}|U^l|^{\Hvar}|\Levy^h[\mathcal{X}_R]|\dd x.
\end{split}
\end{equation*}

Under the assumptions on $p,q$ and by Proposition \ref{preResNumSch}(b)(iv)--(v) and the H\"older inequality, we get
$$
\int_{\R^N}|U^l|^\Hvar|\Levy^h[\mathcal{X}_R]|\dd x\leq (M_{u_0,g})^{\Hvar-\frac{1}{q}}\big(\|u_0\|_{L^1(\R^N)}+\|g\|_{L^1(Q_T)}\big)^{\frac{1}{q}}\|\Levy^h[\mathcal{X}_R]\|_{L^p(\R^N)}.
$$
Hence,
$$
\int_{\R^N}\big|U^j\big|\mathcal{X}_R\dd x\leq \|u_0\mathcal{X}_R\|_{L^1(\R^N)}+\|g\mathcal{X}_R\|_{L^1(0,T;L^1(\R^N))}+C_{\varphi,u_0,g}T\|\Levy^h[\mathcal{X}_R]\|_{L^p(\R^N)}.
$$
Note that the estimate is independent of $j$.

\medskip
\noindent \textbf{2)} {\em $L^1(\R^N)$-equitightness estimate for $\widetilde{U}$}. Assume $t\in(t_{j-1},t_j]$ for some $j\in\mathbb{J}$. By \eqref{SpaceTimeInterpolant}, 
\begin{equation}\label{eq:SpaceTimeInterpolant2}
\widetilde{U}(t)=U^{j-1}+\frac{t-t_{j-1}}{t_j-t_{j-1}}\Big(U^{j}-U^{j-1}\Big)=\frac{t-t_{j-1}}{\Delta t_j}U^j+\frac{t_j-t}{\Delta t_j}U^{j-1},
\end{equation}
and hence, Step 1) then yields
\begin{equation*}
\begin{split}
&\int_{\R^N}|\widetilde{U}(t)|\mathcal{X}_R\dd x\\
&\leq \frac{t-t_{j-1}}{\Delta t_j}\int_{\R^N}|U^j|\mathcal{X}_R\dd x+\frac{t_j-t}{\Delta t_j}\int_{\R^N}|U^{j-1}|\mathcal{X}_R\dd x\\
&\leq  \|u_0\mathcal{X}_R\|_{L^1(\R^N)}+\|g\mathcal{X}_R\|_{L^1(0,T;L^1(\R^N))}+C_{\varphi, u_0,g}T\|\Levy^h[\mathcal{X}_R]\|_{L^p(\R^N)}.
\end{split}
\end{equation*}

\medskip
\noindent \textbf{3)} {\em $L^r$-equitightness estimate for $\widetilde{U}$.} By Proposition \ref{preResNumSch}(b)(v) and \eqref{eq:SpaceTimeInterpolant2},
$$
\|\widetilde{U}(t)\|_{L^\infty(\R^N)}\leq \|u_0\|_{L^\infty(\R^N)}+\int_0^T\|g(\cdot,\tau)\|_{L^\infty(\R^N)}\dd\tau=M_{u_0,g}.
$$ 
Since $\mathbf{1}_{|x|> R}\leq \mathcal{X}_R(x)$,
\begin{equation*}
\begin{split}
\int_{|x|>R}|\widetilde{U}(t)|^r\dd x\leq \int_{\R^N}|\widetilde{U}(t)|^r\mathcal{X}_R\dd x\leq M_{u_0,g}^{r-1}\int_{\R^N}|\widetilde{U}(t)|\mathcal{X}_R\dd x,
\end{split}
\end{equation*}
and then we inherit the estimate from Step 2).
\end{proof}


\section{Extensions and comments}
\label{sec:ExtensionsAndComments}

\subsection{Improved limit regularity in the Lipschitz case}
\label{sec:ImprovedLimitRegularityLip}

When $\varphi$ is Lipschitz (\eqref{phias} with $\Hvar=1$),
compactness in $C([0,T];L^r_{\textup{loc}}({\RN}))$ is enough to show
that the limit point (a very weak solution 
of \eqref{GPME}) belongs to $C([0,T];L^r(\R^N))$. By previous results,
this can be obtained without equitightness and full $L^p(\R^N)$-convergence. Hence, it holds for all
operators $\Operator$ on the form
\eqref{defbothOp}--\eqref{defLocalOp}--\eqref{deflevy} under
\eqref{sigmaas} and \eqref{muas}. To also have convergence in
$C([0,T];L^r(\R^N))$, equitightness and then additional restrictions on
$\Operator$ are needed as in Theorem
\ref{thm:Compactness}.

The key estimate to show such a regularity is that we can obtain global in space equicontinuity in time for $\Uxt_{h}$. To do so, we introduce the following moduli of continuity:

\begin{equation}\label{moduliu_0g}
\lambda_{u_0}(\zeta):=\sup_{|\xi|\leq \zeta}\|u_0-u_0(\cdot+\xi)\|_{L^1(\R^N)},\, 
\lambda_g(\zeta):=\sup_{|\xi|\leq \zeta}\|g-g(\cdot+\xi,\cdot+0)\|_{L^1(Q_T)}.
\end{equation}

\begin{proposition}[Global in space equicontinuity in time]\label{equicontinuousTimeEdiff}
Assume \eqref{phias} with $\Hvar=1$, \eqref{gas}, \eqref{u_0as}, \eqref{Dmuas}, and $r\in[1,\infty)$. Let $U_\beta^j$ be an a.e.-solution of \eqref{defNumSch} corresponding to $U_\beta^0$ and $G_\beta^j$, and $\Uxt_h$ the corresponding interpolant. Then, for all $t,s\in[0,T]$,
\begin{equation*}
\begin{split}
\sup_{0<h<1}\|\Uxt_h(\cdot,t)-\Uxt_h(\cdot,s)\|_{L^r(\R^N)}\leq\Lambda(|t-s|)+\int_{s}^{t}\|g(\cdot,\tau)\|_{L^r(\R^N)}\dd \tau,
\end{split}
\end{equation*}
where 
\begin{equation*}
\Lambda(\zeta):=C_{u_0,g,\varphi}\inf_{\delta>0}\Big(\lambda_{u_0}(\delta)+\lambda_g(\delta)+\zeta\big(\delta^{-2}+1\big)\Big).
\end{equation*}
\end{proposition}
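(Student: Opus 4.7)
By Lemma \ref{lem:equivalenceNumSchNumSchTime}, I work with the time-discrete-only scheme \eqref{defNumSchTime}, first deriving the bound between two grid times $t_{j-k}$ and $t_j$ and then transferring to arbitrary $t,s\in[0,T]$ through the piecewise linear interpolation \eqref{eq:SpaceTimeInterpolant2}. The key idea is a mollification split: convolve with a standard mollifier $\rho_\delta$ and decompose the time increment into a smooth part controlled through the equation and a remainder controlled by spatial equicontinuity; then optimize in $\delta>0$.

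\textbf{Smoothed part via the equation.} Telescoping \eqref{defNumSchTime} gives $U^j-U^{j-k}=\sum_{l=j-k+1}^j\Delta t_l\bigl(\Levy^h[\varphi(U^l)]+G^l\bigr)$. Since $\Levy^h$ has a translation-invariant stencil, $\Levy^h[f]*\rho_\delta=f*\Levy^h[\rho_\delta]$, so
$$(U^j-U^{j-k})*\rho_\delta=\sum_{l=j-k+1}^j\Delta t_l\bigl(\varphi(U^l)*\Levy^h[\rho_\delta]+G^l*\rho_\delta\bigr).$$
By Young's inequality, the Lipschitz character of $\varphi$, and the uniform $L^1\cap L^\infty$-stability of Proposition \ref{preResNumSch}(b)(iv)(v), $\|\varphi(U^l)\|_{L^r(\R^N)}\le C_{u_0,g,\varphi}$. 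The crucial operator-on-mollifier estimate is
$$\|\Levy^h[\rho_\delta]\|_{L^1(\R^N)}\le C\bigl(\delta^{-2}+1\bigr)\quad\text{uniformly in }0<h<1,$$
obtained by Taylor-expanding $\rho_\delta(x+z)-\rho_\delta(x)$ to second order for $|z_\gamma|\le 1$ (the first-order term vanishes upon summation by the symmetry $\omega_{\gamma,h}=\omega_{-\gamma,h}$, $z_{-\gamma}=-z_\gamma$), using $\|D^2\rho_\delta\|_{L^1}=\delta^{-2}\|D^2\rho\|_{L^1}$, and by the triangle inequality together with $\|\rho_\delta\|_{L^1}=1$ for $|z_\gamma|>1$; both resulting stencil sums are finite by \eqref{Dmuas}. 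Controlling the source via $\|G^l*\rho_\delta\|_{L^r}\le\|G^l\|_{L^r}$ and Jensen's inequality on the definition \eqref{discData}, this piece is bounded by $C_{u_0,g,\varphi}(t_j-t_{j-k})(\delta^{-2}+1)+\int_{t_{j-k}}^{t_j}\|g(\cdot,\tau)\|_{L^r}\dd\tau$.

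\textbf{Remainder via spatial equicontinuity and conclusion.} For any $f$, $\|f-f*\rho_\delta\|_{L^r}\le\sup_{|y|\le\delta}\|f-f(\cdot-y)\|_{L^r}$, reducing the remainder to a spatial $L^r$-modulus of $U^j$ and $U^{j-k}$. Applying the $L^1$-contraction of Proposition \ref{preResNumSch}(b)(ii) to $U^l$ and its translate (exploiting translation invariance of \eqref{defNumSchTime}) yields $\sup_{0<h<1}\|\overline{U^l}-\overline{U^l}(\cdot-y)\|_{L^1}\le\lambda_{u_0}(|y|)+\lambda_g(|y|)$; interpolation with the uniform $L^\infty$-bound $M_{u_0,g}$ (absorbing the resulting power into $C_{u_0,g,\varphi}$) gives the $L^r$ analogue $\le C_{u_0,g,\varphi}(\lambda_{u_0}(|y|)+\lambda_g(|y|))$. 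Adding both estimates and taking the infimum over $\delta>0$ produces the claim for $\|U^j-U^{j-k}\|_{L^r}$. For arbitrary $s<t\in[0,T]$ lying in cells $(t_{k-1},t_k]$ and $(t_{j-1},t_j]$, the interpolation formula \eqref{eq:SpaceTimeInterpolant2} writes $\widetilde U_h(t)-\widetilde U_h(s)$ as a convex combination of the consecutive discrete differences $U^j-U^{j-1}$, $U^{j-1}-U^{k}$, $U^k-U^{k-1}$, whose associated time spans sum to $t-s$; applying the grid-time bound termwise delivers the claim.

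\textbf{Main obstacle.} The linchpin is the estimate $\|\Levy^h[\rho_\delta]\|_{L^1}\le C(\delta^{-2}+1)$ uniformly in $h$: symmetry of the stencil is essential to cancel the first-order Taylor term, which would otherwise produce a divergent $\delta^{-1}$ contribution; the uniformity in $h$ relies squarely on \eqref{Dmuas}. A secondary technicality is passing from the naturally available $L^1$ spatial modulus to an $L^r$ one while preserving the linear form of $\Lambda$; this is handled by interpolation with the uniform $L^\infty$-bound and by folding the exponent into $C_{u_0,g,\varphi}$.
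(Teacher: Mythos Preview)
Your proposal is correct and follows essentially the same route as the paper: mollify the scheme, use $\Levy^h[\varphi(U^l)]*\rho_\delta=\varphi(U^l)*\Levy^h[\rho_\delta]$ together with Young's inequality and the key uniform bound $\|\Levy^h[\rho_\delta]\|_{L^1}\le C(\delta^{-2}+1)$ from symmetry plus \eqref{Dmuas}, control the unmollified remainder by the $L^1$-contraction applied to translates and interpolate with the $L^\infty$-bound, then telescope and use the piecewise linear interpolation to pass to arbitrary $t,s$. The only cosmetic difference is that the paper mollifies first and iterates the single-step estimate \eqref{preTimeReg}, whereas you telescope first and then mollify; these are equivalent.
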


We postpone the proof to the end of this section. 

\begin{proposition}
Assume \eqref{phias} with $\Hvar=1$, \eqref{gas}, \eqref{u_0as}, \eqref{Dmuas}, and $r\in[1,\infty)$. Let $U_\beta^j$ be an a.e.-solution of \eqref{defNumSch} corresponding to $U_\beta^0$ and $G_\beta^j$, and $\Uxt_h$ the corresponding interpolant. Then
$$
\Uxt_{h}\to u \qquad \text{in $C([0,T];L_\textup{loc}^r(\R^N))$ as $h\to0^+$,}
$$
where $u$ is the unique very weak solution of \eqref{GPME}. Moreover, $u\in L^\infty(Q_T)\cap C([0,T];L^r(\R^N))$.
\end{proposition}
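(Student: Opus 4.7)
The plan is to decouple the proof into two tasks: (1) obtain convergence in $C([0,T];L^r_{\textup{loc}}(\R^N))$ via a \emph{local} version of the framework in Section \ref{sec:abstractCompConv}, which bypasses equitightness entirely; and (2) use Proposition \ref{equicontinuousTimeEdiff} to lift the qualitative regularity of the limit $u$ to $C([0,T];L^r(\R^N))$.

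For task (1), equicontinuity in space pointwise in time and local equicontinuity in time for $\{\Uxt_h\}_{h>0}$ are already established in \cite{DTEnJa19} (Lemmas 4.5 and 4.6 there), and Proposition \ref{preResNumSch}(b)(v) supplies the uniform $L^\infty$-bound $M_{u_0,g}$. Applying Kolmogorov-Riesz on each compact set and Arzelà-Ascoli in time, combined with a diagonal extraction over an exhaustion $K_n\uparrow\R^N$, produces a subsequence $\Uxt_{h_n}\to u$ in $C([0,T];L^r_{\textup{loc}}(\R^N))$. The consistency \eqref{eq:OperatorConsistent} lets us pass to the limit in the very weak formulation of \eqref{defNumSch}, identifying $u$ as a very weak solution of \eqref{GPME}, and uniqueness in $L^1(Q_T)\cap L^\infty(Q_T)$ (\cite{DTEnJa17b}, Theorem 3.1), argued exactly as in the proof of Proposition \ref{metaConvergence}, upgrades this to convergence of the whole family as $h\to 0^+$.

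For task (2), stability from Proposition \ref{preResNumSch}(b)(iv)--(v) gives uniform $L^\infty$ and $L^1$ bounds on $\Uxt_h(\cdot,t)$, which the limit inherits via local convergence and Fatou; interpolation then places $u$ in $L^\infty(0,T;L^r(\R^N))$ for every $r\in[1,\infty)$. For temporal continuity, Proposition \ref{equicontinuousTimeEdiff} gives, uniformly in $h$,
\[
\|\Uxt_h(\cdot,t)-\Uxt_h(\cdot,s)\|_{L^r(K)}\leq \|\Uxt_h(\cdot,t)-\Uxt_h(\cdot,s)\|_{L^r(\R^N)}\leq \Lambda(|t-s|)+\int_s^t\|g(\cdot,\tau)\|_{L^r(\R^N)}\,\dd\tau
\]
for every compact $K\subset\R^N$, with $\Lambda$ independent of $h$ and $K$. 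Passing $h\to 0^+$ on the left via local convergence and then letting $K\uparrow\R^N$ via monotone convergence transfers the same estimate to $u$ on all of $\R^N$.

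The delicate step is to verify that $\Lambda(\zeta)\to 0$ as $\zeta\to 0^+$. Since $u_0\in L^1(\R^N)$ and $g\in L^1(Q_T)$, translations are $L^1$-continuous, so $\lambda_{u_0}(\delta),\lambda_g(\delta)\to 0$ as $\delta\to 0^+$. Given $\varepsilon>0$, one first fixes $\delta_\varepsilon$ small so that $C_{u_0,g,\varphi}(\lambda_{u_0}(\delta_\varepsilon)+\lambda_g(\delta_\varepsilon))<\varepsilon/2$, and then picks $\zeta$ small enough to absorb the $\zeta(\delta_\varepsilon^{-2}+1)$ term. Together with absolute continuity of $\tau\mapsto\|g(\cdot,\tau)\|_{L^r(\R^N)}$, derived by interpolating the bounds in \eqref{gas}, this yields $u\in C([0,T];L^r(\R^N))$. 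The main obstacle is precisely this balancing: the competing scales inside the infimum defining $\Lambda$ must be handled carefully, since the factor $\delta^{-2}$ blows up as $\delta\to 0^+$, but the infimum structure decouples $\delta$ from the temporal scale $\zeta=|t-s|$ and lets the two vanishing mechanisms be triggered in sequence.
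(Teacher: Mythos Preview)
Your proposal is correct and follows essentially the same route as the paper: the paper obtains the $C([0,T];L^r_{\textup{loc}}(\R^N))$-convergence by citing \cite[Theorem 4.7]{DTEnJa19} (which packages exactly the Kolmogorov--Riesz/Arzel\`a--Ascoli/diagonal/consistency/uniqueness argument you outline), and then lifts the regularity of $u$ to $C([0,T];L^r(\R^N))$ via the same triangle-inequality-plus-Proposition~\ref{equicontinuousTimeEdiff} estimate on compacts, sending first $h\to0^+$ and then $R\to\infty$ by Fatou. Your explicit verification that $\Lambda(\zeta)\to0$ (by first freezing $\delta$ and then shrinking $\zeta$) and that $\int_s^t\|g(\cdot,\tau)\|_{L^r}\dd\tau\to0$ fills in details the paper leaves implicit.
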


\begin{proof}
According to Theorem 4.7 in \cite{DTEnJa19}, we have that
$$
\Uxt_{h}\to u \qquad \text{in $C([0,T];L_\textup{loc}^r(\R^N))$ as $h\to0^+$,}
$$
and $u\in L^1(Q_T)\cap L^\infty(Q_T)\cap C([0,T];L_\textup{loc}^r(\R^N))$. Note that, for all $t,s\in[0,T]$, we use Proposition \ref{equicontinuousTimeEdiff} to obtain
\begin{equation*}
\begin{split}
&\big\|\big(u(\cdot,t)-u(\cdot,s)\big)\mathbf{1}_{|\cdot|\leq R}\big\|_{L^r(\R^N)}\\
&=\big\|\big(u(\cdot,t)-\Uxt_{h}(\cdot,t)\big)\mathbf{1}_{|\cdot|\leq R}\big\|_{L^r(\R^N)}+\big\|\big(\Uxt_{h}(\cdot,t)-\Uxt_{h}(\cdot,s)\big)\mathbf{1}_{|\cdot|\leq R}\big\|_{L^r(\R^N)}\\
&\quad+\big\|\big(\Uxt_{h}(\cdot,s)-u(\cdot,s)\big)\mathbf{1}_{|x|\leq R}\big\|_{L^r(\R^N)}\\
&\leq 2\big\|\big(u-\Uxt_{h}\big)\mathbf{1}_{|\cdot|\leq R}\big\|_{C([0,T];L^r(\R^N))}+\sup_{0<h<1}\big\|\Uxt_{h}(\cdot,t)-\Uxt_{h}(\cdot,s)\big\|_{L^r(\R^N)}\\
&\leq 2\big\|\big(u-\Uxt_{h}\big)\mathbf{1}_{|\cdot|\leq R}\big\|_{C([0,T];L^r(\R^N))}+\Lambda(|t-s|)+\int_{s}^{t}\|g(\cdot,\tau)\|_{L^r(\R^N)}\dd \tau.
\end{split}
\end{equation*}
Now, send $h\to0^+$, and then, by the Fatou lemma, $R\to\infty$. This gives
$$
\|u(\cdot,t)-u(\cdot,s)\big\|_{L^r(\R^N)}\leq \Lambda(|t-s|)+\int_{s}^{t}\|g(\cdot,\tau)\|_{L^r(\R^N)}\dd \tau\to0\qquad\text{as $|t-s|\to0$,}
$$
and the desired regularity is obtained.
\end{proof}

\begin{remark}
As a byproduct, we inherit an equicontinuity in time estimate for $u$.
\end{remark}

\begin{proof}[Proof of Proposition \ref{equicontinuousTimeEdiff}]
To simplify we  write $U^j,\widetilde{U}$ for $\overline{U^j}, \Uxt$. The proof is divided into two steps.

\medskip
\noindent\textbf{1)} {\em Deducing the result for a smooth $U^j$.} Define $U^{j}_\delta(x):=U^j\ast\rho_\delta(x)$ (and similarly for $G^{j}_\delta$). Now, multiply \eqref{defNumSchTime} by $\rho_\delta(x-y)$ and integrate over $\R^N$ (in $y$) to obtain
\begin{equation*}
\begin{split}
U^j_\delta(x)-U^{j-1}_\delta(x)&=\Delta t_j\Levy^{h}[\varphi(U^j)]\ast\rho_\delta(x)+\Delta t_jG^j_\delta(x).
\end{split}
\end{equation*}
By a simple change of variables and Fubini's theorem,
\begin{equation*}
\Levy^{h}[\varphi(U^j)]\ast\rho_\delta(x)=\Levy^{h}[\rho_\delta]\ast\varphi(U^j)(x).
\end{equation*}
We thus have that
\begin{equation}\label{preTimeReg}
\begin{split}
\|U^j_\delta-U^{j-1}_\delta\|_{L^{r}(\R^N)}\leq \Delta t_j\|\Levy^{h}[\rho_\delta]\ast\varphi(U^j)\|_{L^r(\R^N)}+\Delta t_j\|G^j_\delta\|_{L^r(\R^N)}.
\end{split}
\end{equation}
The Young inequality for convolutions and \eqref{phias} (with $\Hvar=1$) give
\begin{equation*}
\begin{split}
\|\Levy^{h}[\rho_\delta]\ast\varphi(U^j)\|_{L^r(\R^N)}&\leq \|\Levy^{h}[\rho_\delta]\|_{L^1(\R^N)}\|\varphi(U^j)\|_{L^r(\R^N)}\\
&\leq |\varphi|_{C^{0,1}}\|\Levy^{h}[\rho_\delta]\|_{L^1(\R^N)}\|U^j\|_{L^{r}(\R^N)}
\end{split}
\end{equation*}
and
\begin{equation*}
\begin{split}
\|\overline{G^j}_\delta\|_{L^r(\R^N)}&\leq\|\rho_\delta\|_{L^{1}(\R^N)}\|\overline{G^j}\|_{L^{r}(\R^N)}.
\end{split}
\end{equation*}

Direct computations and Lemma \ref{lem:estimateDiscNonlocal}(a) yield $\|\rho_\delta\|_{L^1(\R^N)}=1$ and
\begin{equation*}
\begin{split}
&\|\Levy^h[\rho_\delta]\|_{L^1(\R^N)}\\
&\leq \|D^2\rho_\delta\|_{L^1(\R^N)}\sup_{0<h<1}\bigg\{\sum_{h<|z_\gamma|\leq 1}|z_\gamma|^2\omega_{\gamma,h}\bigg\}+2\|\rho_\delta\|_{L
^1(\R^N)}\sup_{0<h<1}\bigg\{\sum_{|z_\gamma|>1}\omega_{\gamma,h}\bigg\},
\end{split}
\end{equation*}
where
$$
\|D^2\rho_\delta\|_{L^1(\R^N)}=\frac{1}{\delta^{2}}\|D^2\rho\|_{L^1(\R^N)}.
$$
Now, combine the above estimates together with Proposition \ref{preResNumSch} (b)(iv) and (b)(v) to get
\begin{equation*}
\begin{split}
\|\Levy^{h}[\rho_\delta]\ast\varphi(U^j)\|_{L^r(\R^N)}\leq |\varphi|_{C^{0,1}}\|\Levy^{h}[\rho_\delta]\|_{L^1(\R^N)}\|U^j\|_{L^{r}(\R^N)}\leq C_{u_0,g,\varphi}\Big(\delta^{-2}+1\Big)
\end{split}
\end{equation*}
where
\begin{equation*}
\begin{split}
C_{u_0,g,\varphi}&:=|\varphi|_{C^{0,1}}\Big(\|D^2\rho\|_{L^{1}(\R^N)}+2\Big)\sup_{0<h<1}\bigg\{\sum_{h<|z_\gamma|\leq 1}|z_\gamma|^2\omega_{\gamma,h}+\sum_{|z_\gamma|>1}\omega_{\gamma,h}\bigg\}\\
&\qquad\times M_{u_0,g}^{\frac{r-1}{r}}\left(\|u_0\|_{L^1(\R^N)}+\int_0^{T}\|g(\cdot,\tau)\|_{L^1(\R^N)}\dd \tau\right)^{\frac{1}{r}}.
\end{split}
\end{equation*}

\medskip
\noindent\textbf{2)} {\em Inheriting the result for $\widetilde{U}$.} Let $t\in(t_{j-1},t_j]$ and $s\in(t_{j-k-1},t_{j-k}]$, then we have, for the corresponding interpolant in time,
\begin{equation*}
\begin{split}
\|\widetilde{U}(\cdot,t)-\widetilde{U}_\delta(\cdot,s)\|_{L^{r}(\R^N)}&\leq \big\|\big(\widetilde{U}_\delta(\cdot,t)-\widetilde{U}_\delta(\cdot,t_{j-1})\big)+\big(\widetilde{U}_\delta(\cdot,t_{j-k})-\widetilde{U}_\delta(\cdot,s)\big)\big\|_{L^{r}(\R^N)}\\
&\quad+\sum_{l=j-k+1}^{j-1}\|\widetilde{U}_\delta(\cdot,t_l)-\widetilde{U}_\delta(\cdot,t_{l-1})\|_{L^{r}(\R^N)}\\
&=\big\|\big(\widetilde{U}_\delta(\cdot,t)-U^{j-1}_\delta\big)+\big(U^{j-k}_\delta-\widetilde{U}_\delta(\cdot,s)\big)\big\|_{L^{r}(\R^N)}\\
&\quad+\sum_{l=j-k+1}^{j-1}\|U^{l}_\delta-U^{l-1}_\delta\|_{L^{r}(\R^N)}.\\
\end{split}
\end{equation*}
Since we use piecewise linear interpolation in time,
$$
\widetilde{U}_\delta(x,t)=U^{j-1}_\delta(x)+\frac{t-t_{j-1}}{t_j-t_{j-1}}\big(U^j_\delta(x)-U^{j-1}_\delta(x)\big)
$$
and
$$
\widetilde{U}_\delta(x,s)=U^{j-k-1}_\delta(x)+\frac{s-t_{j-k-1}}{t_{j-k}-t_{j-k-1}}\big(U^{j-k}_\delta(x)-U^{j-k-1}_\delta(x)\big).
$$
Hence,
\begin{equation*}
\begin{split}
&\big\|\big(\widetilde{U}_\delta(\cdot,t)-U^{j-1}_\delta\big)+\big(U^{j-k}_\delta-\Uxt_\delta(\cdot,s)\big)\big\|_{L^{r}(\R^N)}\\
&\leq \frac{t-t_{j-1}}{\Delta t_j}\|U^j_\delta-U^{j-1}_\delta\|_{L^{r}(\R^N)}+\frac{t_{j-k}-s}{\Delta t_{j-k}}\|U^{j-k}_\delta-U^{j-k-1}_\delta\|_{L^{r}(\R^N)}.
\end{split}
\end{equation*}
By iterating \eqref{preTimeReg}, using Tonelli's lemma, equation \eqref{moduliu_0g}, and Proposition \ref{preResNumSch}(b)(iii), we obtain
\begin{equation*}
\begin{split}
&\|\widetilde{U}(\cdot,t)-\widetilde{U}(\cdot,s)\|_{L^{r}(\R^N)}\\
&\leq \|\widetilde{U}(\cdot,t)-\widetilde{U}_\delta(\cdot,t)\|_{L^{r}(\R^N)}+\|\widetilde{U}_\delta(\cdot,t)-\widetilde{U}_\delta(\cdot,s)\|_{L^{r}(\R^N)}+\|\widetilde{U}_\delta(\cdot,s)-\widetilde{U}(\cdot,s)\|_{L^{r}(\R^N)}\\
&\leq 2^{1+\frac{r-1}{r}}M_{u_0,g}^{\frac{r-1}{r}}\big(\lambda_{u_0}(\delta)+\lambda_g(\delta)\big)+\frac{t-t_{j-1}}{\Delta t_j}\|U^j_\delta-U^{j-1}_\delta\|_{L^{r}(\R^N)}\\
&\quad+\sum_{l=j-k+1}^{j-1}\|U^{l}_\delta-U^{l-1}_\delta\|_{L^{r}(\R^N)}+\frac{t_{j-k}-s}{\Delta t_{j-k}}\|U^{j-k}_\delta-U^{j-k-1}_\delta\big\|_{L^{r}(\R^N)}\\
&\leq 2^{1+\frac{r-1}{r}}M_{u_0,g}^{\frac{r-1}{r}}\big(\lambda_{u_0}(\delta)+\lambda_g(\delta)\big)+C_{u_0,g,\varphi,\omega,\rho}|t-s|\Big(\delta^{-2}+1\Big)\\
&\quad +\int_{s}^{t}\|g(\cdot,\tau)\|_{L^r(\R^N)}\dd \tau.\\
\end{split}
\end{equation*}
The proof is complete.
\end{proof}

\begin{remark}
The above proof and result is similar to the Kru\v{z}kov interpolation lemma (see e.g. Lemma 2.4 in \cite{KaRi01}). For variants in the nonlocal case, we refer e.g. to \cite{CiJa11,CiJa14,DTEnJa17a,DTEnJa19}.
\end{remark}

\subsection{Results for the convection-diffusion problem}
\label{sec:ResultsForCDE}

All of the results that we have previously deduced also holds for \emph{entropy solutions} (cf. \cite{CiJa14}) of:

\begin{equation}\label{CDE}\tag{\textup{CDE}}
\begin{cases}
\dell_tu+\sum_{i=1}^N\dell_{x_i}f_i(u)-\mathfrak{L}[\varphi(u)]=g \qquad\qquad&\text{in $Q_T$,}\\
u(x,0)=u_0(x) \qquad\qquad&\text{on $\R^N$,}
\end{cases}
\end{equation}
where the convection and diffusion fluxes satisfy the standard assumption
\begin{align}
&f_i\in W_\textup{loc}^{1,\infty}(\R, \R)\text{ and }\varphi\in W_\textup{loc}^{1,\infty}(\R, \R)\text{ is nondecreasing}.
\tag{$\textup{A}_{f,\varphi}$}&
\label{Aconv}
\end{align}

To discretize the convection flux, we consider numerical fluxes $F:\R^2\to\R^N$ satisfying
\begin{enumerate}[{\rm (i)}]
\item $F=(F_1,\ldots, F_N)$ is Lipschitz continuous with Lipschitz constant $L_F$,
\item $F$ is consistent, i.e., $F_i(u,u)=f_i(u)$ for all $u\in\R$ and all $i\in\{1,\ldots,N\}$, and
\item $F(u_1,u_2)$ is nondecreasing w.t.r. $u_1$ and nonincreasing w.t.r. $u_2$,
\end{enumerate}
which gives a monotone approximation of $\sum_{i=1}^N\dell_{x_i}f_i(u)$ provided by
$$
\sum_{i=1}^ND_{h,i}^{-}F_i(\psi(x_\beta),\psi(x_\beta+he_i))\qquad\text{for all}\qquad \psi:\Grid\to\R,
$$
where $D_{h,i}^{-}\phi(x_\beta):=1/h(\phi(x_\beta)-\phi(x_\beta-he_i))$ and $\{e_i\}_{i=1}^N$ is the canonical basis of $\R^N$. The full scheme is then given by
\begin{equation}\label{defNumSchConv}
\begin{split}
U_\beta^j&=U_\beta^{j-1}-\Delta t_j\sum_{i=1}^ND_{h,i}^{-}F_i(U_{\beta}^{j-1},U_{\beta+e_i}^{j-1})\\
&\quad+\Delta t_j\Levy^{h}[\varphi(U_{\cdot}^j)]_\beta+\Delta t_jG_\beta^j\quad\forall(\beta,j)\in(\Z^N,\J),
\end{split}
\end{equation}
and it enjoys Propositions \ref{preResNumSch} and \ref{prop:AprioriTi}. We refer the reader to \cite{CiJa14} for further details. Analogously, Lemma \ref{lem:TailControlTi} also holds for \eqref{defNumSchConv}. It is then straightforward to check that Theorem \ref{tailControlGPME2} will contain the extra term
$$
C_{u_0,g,F}T\sum_{i=1}^N\|D_{h,i}^+\mathcal{X}_R\|_{L^p(\R^N)},
$$
where $D_{h,i}^+\phi(x):=1/h(\phi(x+he_i)-\phi(x))$ and $C_{u_0,g,F}>0$ is some constant depending on the norms of $u_0,g$ and the Lipschitz constant $L_F$ of $F$. That term can be estimated by Lemma \ref{convDAntiCutOff} and:

\begin{lemma}[{{\cite[Theorem 3]{Eva10}}}]\label{lem:estimateDiscConvection}
Assume $p\in[1,\infty]$, and let $\psi\in C^1(\R^N)$ with $D\psi\in L^p(\R^N)$. Then
\begin{equation*}
\begin{split}
\sup_{0<h<1}\sum_{i=1}^N\|D_{h,i}^+\psi\|_{L^p(\R^N)}\leq N\|D\psi\|_{L^p(\R^N)}.
\end{split}
\end{equation*}
\end{lemma}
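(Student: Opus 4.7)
The plan is to reduce the discrete forward difference quotient to an average of the continuous partial derivative via the fundamental theorem of calculus, and then apply Minkowski's integral inequality together with translation invariance of $L^p(\R^N)$.

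First I would write, for a.e.\ $x\in\R^N$ and each $i\in\{1,\dots,N\}$,
\begin{equation*}
D_{h,i}^+\psi(x)=\frac{\psi(x+he_i)-\psi(x)}{h}=\int_0^1 \partial_{x_i}\psi(x+she_i)\dd s,
\end{equation*}
which is valid since $\psi\in C^1(\R^N)$. This representation is the key step: it expresses the discrete operator as a Bochner-type average of translates of a single partial derivative, with no $h^{-1}$ factor remaining.

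Next, for $p\in[1,\infty)$ I would apply Minkowski's integral inequality (see Appendix \ref{sec:Minkowski}) to move the $L^p(\R^N)$-norm inside the integral over $s$, and then invoke translation invariance of the Lebesgue measure:
\begin{equation*}
\|D_{h,i}^+\psi\|_{L^p(\R^N)}\leq \int_0^1 \|\partial_{x_i}\psi(\cdot+she_i)\|_{L^p(\R^N)}\dd s=\|\partial_{x_i}\psi\|_{L^p(\R^N)}.
\end{equation*}
The case $p=\infty$ follows directly by taking the essential supremum inside the integral in the same way, or by a separate one-line argument using $\|\partial_{x_i}\psi(\cdot+she_i)\|_{L^\infty}=\|\partial_{x_i}\psi\|_{L^\infty}$.

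Finally, summing over $i=1,\dots,N$ and using the trivial pointwise bound $|\partial_{x_i}\psi(x)|\leq |D\psi(x)|$, which implies $\|\partial_{x_i}\psi\|_{L^p(\R^N)}\leq \|D\psi\|_{L^p(\R^N)}$ for each $i$, gives
\begin{equation*}
\sum_{i=1}^N \|D_{h,i}^+\psi\|_{L^p(\R^N)}\leq \sum_{i=1}^N \|\partial_{x_i}\psi\|_{L^p(\R^N)}\leq N\|D\psi\|_{L^p(\R^N)},
\end{equation*}
uniformly in $h>0$, which yields the stated bound after taking the supremum over $0<h<1$. There is no substantive obstacle here: the only mild point to notice is that no further regularity beyond $C^1$ with $D\psi\in L^p$ is needed, since the fundamental theorem of calculus applies pointwise and Minkowski handles the $L^p$ transfer without any density argument.
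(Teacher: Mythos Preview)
Your proof is correct and is the standard argument; the paper itself does not give a proof for this lemma but simply cites \cite[Theorem 3]{Eva10}, and your fundamental-theorem-of-calculus plus Minkowski approach is precisely the classical one used there.
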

Hence, the extra term converges to zero when $R\to\infty$ if $p>N$,
which poses no extra restriction in Corollary \ref{tailControlInLimitGPME2}. We then reproduce all the results of
Section \ref{sec:CompactnessAndConvergenceResults} for \eqref{CDE}
under the additional assumption \eqref{Aconv}. In particular:

\begin{theorem}[Compactness]
Assume \eqref{Aconv}--\eqref{muas}, and $r\in[1,\infty)$. Let $\{U_\beta^j\}_{h>0}$ be a sequence of a.e.-solutions of \eqref{defNumSchConv} and $\{\Uxt_h\}_{h>0}$ its corresponding sequence of interpolants. Then there exists a subsequence $\{\Uxt_{h_n}\}_{n\in\N}$ and a $u\in C([0,T];L^r(\R^N))$ such that
$$
\Uxt_{h_n}\to u \qquad\text{in $C([0,T];L^r(\R^N))$ as $n\to\infty$,}
$$
if any of the following holds:
\begin{enumerate}[{\rm (i)}]
\item When $\Levy^h=\Delta_h$. 
\item When $\Levy^h$ has weights satisfying \eqref{Dmuas'}.
\item When $\Levy^h$ has weights satisfying \eqref{DmuasFrac}.
\end{enumerate}
\end{theorem}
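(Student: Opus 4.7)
The plan is to verify that the interpolants $\Uxt_h$ of the convection-diffusion scheme \eqref{defNumSchConv} satisfy properties \eqref{sec2prop:equitight'}, \eqref{sec2prop:equicontspace}, and \eqref{sec2prop:equiconttime'} from Section \ref{sec:abstractCompConv}, and then invoke Theorem \ref{metaCompactness} together with Lemma \ref{FromLocalToGlobalEquicontinuity}. A crucial simplification is that assumption \eqref{Aconv} forces $\varphi\in W^{1,\infty}_{\textup{loc}}(\R)$, i.e.\ we are in the case $\Hvar=1$. Hence in the equitightness bound I may choose $p=\infty$ and $q=1$, which makes all the $\Hvar$-restrictions appearing in Corollary \ref{tailControlInLimitGPME2} vacuous. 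This explains why cases \textup{(i)}--\textup{(iii)} of the present theorem are free of any lower bound on $\Hvar$.

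To establish equitightness \eqref{sec2prop:equitight'}, I would repeat the chain of estimates in the proof of Theorem \ref{tailControlGPME2}, but applied to the solution operator $\Ti$ associated to \eqref{defNumSchConv}. After multiplying the scheme by $\sgn(w)$ and testing against $\mathcal{X}_R$, a discrete summation by parts on the convection term produces exactly the extra contribution $C_{u_0,g,F}T\sum_i \|D_{h,i}^+\mathcal{X}_R\|_{L^p(\R^N)}$ announced just before the theorem, with a constant depending only on the Lipschitz constant $L_F$ of $F$ and the $L^1\cap L^\infty$-norms of $u_0$ and $g$. Taking $p=\infty$: the terms $\|u_0\mathcal{X}_R\|_{L^1}$ and $\|g\mathcal{X}_R\|_{L^1(0,T;L^1)}$ go to zero by dominated convergence; $\|\Levy^h[\mathcal{X}_R]\|_{L^\infty(\R^N)}\to 0$ uniformly in $h\in(0,1)$ via the $L^\infty$-endpoint of the estimates in Appendix \ref{sec:AuxRes} (Corollary \ref{wellPosednessLaplaceAntiCutOff} in case \textup{(i)} and Corollary \ref{wellPosednessLevyAntiCutOff} in cases \textup{(ii)}--\textup{(iii)}); and $\sum_i\|D_{h,i}^+\mathcal{X}_R\|_{L^\infty}\leq N\|D\mathcal{X}_R\|_{L^\infty}=NR^{-1}\|D\mathcal{X}\|_{L^\infty}\to 0$ by Lemma \ref{lem:estimateDiscConvection} and Lemma \ref{convDAntiCutOff}(b). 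Combined with Proposition \ref{preResNumSch}(b)(v), this gives \eqref{sec2prop:equitight'} with $r\in[1,\infty)$.

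For spatial equicontinuity \eqref{sec2prop:equicontspace} and for the local-in-time equicontinuity \eqref{sec2prop:equiconttime'}, I would rely on the translation invariance of the stencil of $\Levy^h$, the discrete $L^1$-contraction, and the equicontinuity-in-time estimate of Proposition \ref{preResNumSch}(b)(vi), all of which extend to \eqref{defNumSchConv} under \eqref{Aconv} by the monotone-conservative scheme machinery developed in \cite{CiJa14}. These provide the analogues of Lemma 4.5 and Lemma 4.6 of \cite{DTEnJa19} for the convection-diffusion scheme, which give precisely \eqref{sec2prop:equicontspace} and \eqref{sec2prop:equiconttime'}. With all three properties in hand, Theorem \ref{metaCompactness} (via Lemma \ref{FromLocalToGlobalEquicontinuity}) yields a subsequence of $\{\Uxt_h\}$ converging in $C([0,T];L^r(\R^N))$ to some $u\in C([0,T];L^r(\R^N))$.

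The main obstacle is the careful bookkeeping in the discrete summation by parts for the convection term: one must check that after commuting the discrete difference onto $\mathcal{X}_R$, the resulting constant genuinely depends only on $L_F$ and on $\|u_0\|_{L^1\cap L^\infty}$, $\|g\|_{L^1(L^1\cap L^\infty)}$, so that the bound is uniform in $h$ when $R\to\infty$. Once this is in place, and since $p=\infty$ makes the extra condition $p>N$ from Lemma \ref{lem:estimateDiscConvection} automatic, the argument reduces to the same three verifications as in the pure diffusion case.
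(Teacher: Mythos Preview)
Your proposal is correct and follows essentially the same route as the paper. The paper's argument, contained in the discussion immediately preceding the theorem in Section \ref{sec:ResultsForCDE}, likewise observes that \eqref{Aconv} forces $\Hvar=1$, that the convection term contributes the extra summand $C_{u_0,g,F}T\sum_i\|D_{h,i}^+\mathcal{X}_R\|_{L^p(\R^N)}$ (controlled via Lemmas \ref{convDAntiCutOff} and \ref{lem:estimateDiscConvection}, requiring only $p>N$), and that the remaining properties \eqref{sec2prop:equicontspace} and \eqref{sec2prop:equiconttime'} are inherited from \cite{CiJa14}; your choice to make the endpoint $p=\infty$ explicit is a minor presentational difference, not a different strategy.
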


\begin{remark}
As mentioned in the introduction, we then improve the convergence results of \cite{EvKa00} and \cite{KaRi01} from the respective $C([0,T];L_\textup{loc}^1(\R^N))$ and $L_\textup{loc}^1(Q_T)$ to $C([0,T];L^1(\R^N))$. 
\end{remark}


\section*{Acknowledgements}

All the authors were supported by the Toppforsk (research excellence)
project Waves and Nonlinear Phenomena (WaNP), grant no. 250070 from the Research Council of Norway. 

F.~del Teso was supported by the Spanish Government through PGC2018-094522- B-I00, and RYC2020-029589-I funded by the MICIN/AEI.

J.~Endal has received funding from the European Union’s Horizon 2020 research and innovation programme under the Marie Sk\l odowska-Curie grant agreement no. 839749 ``Novel techniques for quantitative behavior of convection-diffusion equations (techFRONT)'', and from the Research Council of Norway under the MSCA-TOPP-UT grant agreement no. 312021.


\appendix

\section{Known compactness theorems}
\label{sec:AAKR}

We will use the following form of Arzel\`a-Ascoli's compactness theorem (see e.g. Chapter X.2.5 in \cite{Bou66} or Section 2 in \cite{Sim87}): 

\begin{theorem}[Arzel\`a-Ascoli]\label{thm:AA}
Let $(K,d_K)$ be a compact metric space and $(X,d_X)$ a complete metric space. Define 
$$
C(K; X):=\{\text{continuous functions $\psi:K\to X$}\}
$$ 
endowed with the uniform distance metric $\sup_{t\in K}d_X(\cdot,\cdot)$. Then $M\subset C(K;X)$ is relatively compact if and only if, for all $t\in K$, the following two conditions are fulfilled:
\begin{enumerate}[{\rm (i)}]
\item \textup{(Pointwise compactness)} $\{\psi(t)\in X:\psi\in M\}\subset X$ is relatively compact.
\item \textup{(Equicontinuity)} For all $\veps>0$, there exists a neighbourhood $V$ about $t$ such that, for all $\psi\in M$ and all $s\in V$,
$$
d_X(\psi(t),\psi(s))< \veps.
$$
\end{enumerate}
\end{theorem}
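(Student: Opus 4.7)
The plan is to prove the two implications of Theorem \ref{thm:AA} using standard metric-space techniques. Since $K$ is compact and $X$ is complete, $C(K;X)$ under the uniform metric is itself a complete metric space, so throughout I can replace ``relatively compact'' by ``totally bounded''.

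For the forward direction, I would first note that for each fixed $t \in K$ the evaluation map $\mathrm{ev}_t:C(K;X)\to X$, $\psi\mapsto \psi(t)$, is $1$-Lipschitz; continuous images of relatively compact sets are relatively compact, which yields \textup{(i)}. For equicontinuity \textup{(ii)}, I would cover $\overline{M}$ (compact in the uniform topology) by finitely many balls $B(\psi_i,\veps/3)$, $i=1,\dots,n$; use continuity of each of the $n$ fixed functions $\psi_i$ at $t$ to find a common neighbourhood $V$ on which every $\psi_i$ oscillates by less than $\veps/3$; and close with a triangle inequality after approximating an arbitrary $\psi\in M$ by the nearest $\psi_i$.

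The reverse direction is where the real work lives. Assuming \textup{(i)} and \textup{(ii)}, I would show $M$ is totally bounded. Fix $\veps>0$. Equicontinuity furnishes, for each $t\in K$, a neighbourhood $V_t$ on which every element of $M$ oscillates by less than $\veps/3$; compactness of $K$ extracts a finite subcover by $V_{t_1},\dots,V_{t_n}$. Property \textup{(i)} applied at $t_1,\dots,t_n$ makes the set $A=\{(\psi(t_1),\dots,\psi(t_n)):\psi\in M\}$ relatively compact in $X^n$ (with the max-metric), hence totally bounded; I would pick a finite $\veps/3$-net for $A$ and lift each net point to some $\psi_j\in M$. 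The claim is that the finitely many $\psi_j$ form an $\veps$-net for $M$ in the uniform metric: for any $\psi\in M$ there is an index $j$ with $d_X(\psi(t_k),\psi_j(t_k))<\veps/3$ for all $k$, and for arbitrary $t\in K$ one picks $k$ with $t\in V_{t_k}$ and chains three $\veps/3$ estimates through $\psi(t_k)$ and $\psi_j(t_k)$.

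The main obstacle is keeping the quantifiers and the three $\veps/3$-contributions in the reverse direction straight: one has to choose the cover of $K$ first (depending on the equicontinuity modulus, uniform in $\psi\in M$), then the finite $\veps/3$-net on $X^n$ (using \textup{(i)} only at the finite set $\{t_1,\dots,t_n\}$), and only then check that a single triangle inequality at an arbitrary $t\in K$ combines oscillation control of $\psi$, closeness at grid points, and oscillation control of $\psi_j$. Once total boundedness is established, completeness of $C(K;X)$ upgrades it to relative compactness and the theorem is proved.
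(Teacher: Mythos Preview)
Your argument is correct and is the standard proof of the Arzel\`a--Ascoli theorem for functions with values in a complete metric space: forward via Lipschitz evaluation maps plus a finite uniform cover, backward via total boundedness through a finite cover of $K$ and an $\veps/3$-net in $X^n$. There is nothing to compare against here, however: the paper does not prove Theorem~\ref{thm:AA} at all but simply quotes it as a known result, referring the reader to Bourbaki \cite[Chapter~X.2.5]{Bou66} and Simon \cite[Section~2]{Sim87}. Your write-up is thus a self-contained substitute for those citations rather than an alternative to an argument in the paper.
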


\begin{remark}\label{rem:ExtensionsAA}
\begin{enumerate}[{\rm (a)}]
\item A related version of the above theorem also exists. Define 
$$
\{\psi_n\}_{n\in\N}\subset F(K;X):=\{\text{functions $\psi:K\to X$}\}
$$ 
endowed with the uniform distance metric $\sup_{t\in K}d_X(\cdot,\cdot)$. Assume that $\{\psi_n\}_{n\in\N}$ satisfies condition (i) and condition (ii) replaced by: for all $\veps>0$ there exists $\eta>0$ such that
$$
d_K(t,s)<\eta\quad \textup{$\forall 0\leq t\leq s\leq T$}\quad\implies\quad \lim_{n\to\infty}d_X(\psi_n(t),\psi_n(s))< \veps\quad\textup{$\forall \psi_n\in M$}.
$$
Then there exists $\{\psi_{n_k}\}_{k\in\N}\in F(K;X)$ such that $\psi_{n_k}\to\psi$ in $F(K;X)$ as $k\to\infty$, and moreover, $\psi\in C(K;X)$. We refer the reader to \cite[Theorem C.11]{DrEyGaGuHe18} for a proof. 
\item Note that by part (a) it is possible to conclude that the limit is indeed in $C(K;X)$ even though the convergence is not in that space.
\end{enumerate}
\end{remark}

The standard Kolmogorov-Riesz's compactness theorem (see e.g. Theorem 5 in \cite{H-OHo10}) is given as follows:

\begin{theorem}[Kolmogorov-Riesz]\label{thm:KR}
Let $p\in[1,\infty)$. Then $M\subset L^p(\R^N)$ is relatively compact if and only if the following two conditions are fulfilled:
\begin{enumerate}[{\rm (i)}]
\item \textup{(Equicontinuity)} For all $\veps>0$ there is some $\eta>0$ such that, for all $\psi\in M$ and all $\xi\in\R^N$ with $|\xi|<\eta$,
$$
\|\psi(\cdot+\xi)-\psi\|_{L^p(\R^N)}< \veps.
$$ 
\item \textup{(Equitightness)} For all $\veps>0$ there is some $R>0$ such that, for all $\psi\in M$,
$$
\int_{|x|> R}|\psi(x)|^p\dd x<\veps^p.
$$
\end{enumerate}
\end{theorem}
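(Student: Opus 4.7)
The plan is to prove the two implications separately. The forward direction (relative compactness $\Rightarrow$ (i) and (ii)) will follow from a finite $\veps$-net argument. Given $\veps>0$, I would cover $M$ by finitely many balls of radius $\veps/3$ centered at representatives $\psi_1,\ldots,\psi_k\in L^p(\R^N)$. Dominated convergence applied to each $\psi_i$ gives an $R_i>0$ making $\|\psi_i\indik_{|\cdot|>R_i}\|_{L^p(\R^N)}<\veps/3$, and continuity of translation in $L^p$ gives an $\eta_i>0$ making $\|\psi_i(\cdot+\xi)-\psi_i\|_{L^p(\R^N)}<\veps/3$ for $|\xi|<\eta_i$. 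Taking $R=\max_i R_i$ and $\eta=\min_i\eta_i$ and estimating each $\psi\in M$ through its nearest $\psi_i$ via the triangle inequality yields both (i) and (ii).

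For the reverse direction I would show that $M$ is totally bounded in $L^p(\R^N)$, so that relative compactness follows from completeness. Fix $\veps>0$. I would use (ii) to pick $R>0$ with $\|\psi\indik_{|\cdot|>R}\|_{L^p(\R^N)}<\veps/3$ uniformly in $\psi\in M$, and then choose a standard mollifier $\rho_\delta\geq 0$ with $\int\rho_\delta=1$ and $\supp\rho_\delta\subset\overline{B_\delta}$. Minkowski's integral inequality (Appendix \ref{sec:Minkowski}) yields
\begin{equation*}
\|\psi\ast\rho_\delta-\psi\|_{L^p(\R^N)}\leq \sup_{|y|\leq\delta}\|\psi(\cdot-y)-\psi\|_{L^p(\R^N)},
\end{equation*}
so, by (i), choosing $\delta$ small makes this bound uniformly less than $\veps/3$ over $\psi\in M$. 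This is the step in which uniformity across the family is decisive.

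Next, (i) and (ii) together imply a uniform bound $K:=\sup_{\psi\in M}\|\psi\|_{L^p(\R^N)}<\infty$, as pointed out in the remark following Theorem \ref{metaCompactness} (following \cite{H-OHoMa17}). With $q$ the conjugate exponent of $p$, H\"older's inequality gives both $|\psi\ast\rho_\delta(x)|\leq K\|\rho_\delta\|_{L^q(\R^N)}$ and $|\psi\ast\rho_\delta(x)-\psi\ast\rho_\delta(x')|\leq K\|\rho_\delta(\cdot-x)-\rho_\delta(\cdot-x')\|_{L^q(\R^N)}$, the latter tending to zero as $|x-x'|\to0$ by uniform continuity of $\rho_\delta$. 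Hence the family $\{(\psi\ast\rho_\delta)|_{\overline{B_R}}:\psi\in M\}$ is uniformly bounded and equicontinuous in $C(\overline{B_R})$. The classical Arzel\`a-Ascoli theorem (Theorem \ref{thm:AA}) then produces a finite $\tfrac{\veps}{3|B_R|^{1/p}}$-net in the sup norm, which converts to a finite $\veps/3$-net in $L^p(B_R)$. Combining truncation, mollification, and the discrete net via the triangle inequality yields a finite $\veps$-net for $M$ in $L^p(\R^N)$.

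The main obstacle is precisely the uniform passage from (i) to a mollification bound that does not depend on $\psi\in M$: the modulus provided by (i) must be exploited in integral form, and Minkowski's integral inequality is the correct device to do so. Once this uniform step is in place, the rest is routine bookkeeping of three $\veps/3$ contributions, and total boundedness in the complete space $L^p(\R^N)$ upgrades automatically to relative compactness.
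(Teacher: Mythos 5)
The paper does not prove Theorem~\ref{thm:KR} at all --- it simply cites \cite{H-OHo10}, so there is no in-paper proof to match against. Your blind proof is correct and self-contained. The forward direction by a finite $\veps/3$-net, plus dominated convergence and $L^p$-continuity of translation applied to each center, is the standard argument. In the converse direction you establish total boundedness by the classical mollification route: truncate via (ii), mollify via (i) together with the Minkowski integral inequality (Theorem~\ref{thm:Minkowski}), and then squeeze the mollified, truncated family through Arzel\`a--Ascoli (Theorem~\ref{thm:AA}) on $\overline{B_R}$. This is a perfectly valid alternative to the discretization-by-cube-averages proof that Hanche-Olsen and Holden give in \cite{H-OHo10}: their approach replaces each $\psi$ by a piecewise constant approximation over a finite grid and shows the resulting finite-dimensional family is bounded; yours replaces $\psi$ by $\psi\ast\rho_\delta$ and uses equicontinuity in $C(\overline{B_R})$. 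The mollification route is arguably the more natural choice here, since it reuses exactly the two auxiliary tools already recorded in the paper's appendices (Arzel\`a--Ascoli and Minkowski for integrals). You are also right that the equiboundedness $K=\sup_{\psi\in M}\|\psi\|_{L^p}<\infty$ is not free but follows from (i)--(ii); the paper explicitly delegates this to \cite{H-OHoMa17} in the remark after the theorem, so citing it is consistent with the paper's own conventions. One small stylistic point: the claim that $\|\rho_\delta(\cdot-x)-\rho_\delta(\cdot-x')\|_{L^q}\to 0$ ``by uniform continuity of $\rho_\delta$'' is fine since $\rho_\delta$ is continuous with compact support, but for clarity you might phrase it as continuity of translation in $L^q$ (or, for $q<\infty$, combine uniform continuity with the fixed compact support explicitly); this is a matter of exposition, not a gap.
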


\begin{remark}
Recall that in \cite{H-OHoMa17} it has been pointed out that the additional usual assumption of equiboundedness follows from properties (i)--(ii).
\end{remark}

\section{The Minkowski inequality for integrals}
\label{sec:Minkowski}

This is of course a well-known result (see e.g. Proposition 4.3 in \cite{DBe02}), but we include it for completeness.

\begin{theorem}\label{thm:Minkowski}
Let $(X,\mathcal{M},\mu)$ and $(Y,\mathcal{N},\nu)$ be $\sigma$-finite measure spaces and $p\in[1,\infty]$.
\begin{enumerate}[{\rm (a)}]
\item If $f\geq0$ is an $\mathcal{M}\otimes\mathcal{N}$-measurable function, then
\begin{enumerate}[{\rm (i)}]
\item the functions $x\mapsto\|f(x,\cdot)\|_{L^1(Y,\nu)}$ and $y\mapsto\|f(\cdot,y)\|_{L^p(X,\mu)}$ are respectively $\mu$- and $\nu$-measurable; and
\item 
\begin{equation*}
\begin{split}
&\bigg\|\int_{Y}f(\cdot,y)\dd \nu(y)\bigg\|_{L^p(X,\mu)}=\bigg(\int_{X}\bigg|\int_{Y}f(x,y)\dd \nu(y)\bigg|^p\dd\mu(x)\bigg)^{\frac{1}{p}}\\
&\leq\int_{Y}\bigg(\int_{X}|f(x,y)|^p\dd \mu(x)\bigg)^{\frac{1}{p}}\dd\nu(y)=\int_{Y}\|f(\cdot,y)\|_{L^p(x,\mu)}\dd \nu(y).
\end{split}
\end{equation*}
\end{enumerate}
\item If $f:X\times Y\to \mathbb{C}$ is an $\mathcal{M}\otimes\mathcal{N}$-measurable function and $\int_{Y}\|f(\cdot,y)\|_{L^p(X,\mu)}\dd \nu(y)<\infty$, then
\begin{enumerate}[{\rm (i)}]
\item the functions $x\mapsto\|f(x,\cdot)\|_{L^1(Y,\nu)}$ and $y\mapsto\|f(\cdot,y)\|_{L^p(X,\mu)}$ are respectively $\mu$- and $\nu$-measurable;
\item for $\mu$-a.e. $x$, $f(x,\cdot)\in L^1(Y,\dd \nu)$;
\item the $\mu$-a.e. defined function $x\mapsto \int_{Y}f(x,y)\dd \nu(y)$ is in $L^p(X,\mu)$; and
\item the inequality in (a)(ii) holds.
\end{enumerate}
\end{enumerate}
\end{theorem}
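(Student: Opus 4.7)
The plan is to prove the Minkowski integral inequality by splitting into three regimes for $p$ and then deducing part (b) from part (a). I would first dispose of the easy endpoints: for $p=1$, assertion (a)(ii) reduces to an equality and follows immediately from Tonelli's theorem applied to the nonnegative $\mathcal{M}\otimes\mathcal{N}$-measurable function $f$. For $p=\infty$, I would simply note that for $\mu$-a.e.\ $x$ the integrand satisfies $\int_Y f(x,y)\dd\nu(y)\leq \int_Y \|f(\cdot,y)\|_{L^\infty(X,\mu)}\dd\nu(y)$ by the pointwise $\mu$-essential bound on each slice, and taking the $L^\infty(X,\mu)$-norm of both sides gives the claim.

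For the main case $1<p<\infty$, I would proceed by duality. Let $q$ be the conjugate exponent, and let $F(x):=\int_Y f(x,y)\dd\nu(y)$, which is $\mathcal{M}$-measurable (and $[0,\infty]$-valued) by Tonelli. Recall that for a nonnegative $\mathcal{M}$-measurable function one has the characterization
\begin{equation*}
\|F\|_{L^p(X,\mu)}=\sup\Big\{\int_X F(x)h(x)\dd\mu(x)\,:\,h\geq 0,\ h\in L^q(X,\mu),\ \|h\|_{L^q(X,\mu)}\leq 1\Big\}.
\end{equation*}
For such an $h$, I would apply Tonelli (valid since $fh\geq 0$ is $\mathcal{M}\otimes\mathcal{N}$-measurable after pulling back $h$) and then H\"older's inequality in the $x$-variable:
\begin{equation*}
\int_X F(x)h(x)\dd\mu(x)=\int_Y\!\int_X f(x,y)h(x)\dd\mu(x)\dd\nu(y)\leq \int_Y \|f(\cdot,y)\|_{L^p(X,\mu)}\|h\|_{L^q(X,\mu)}\dd\nu(y).
\end{equation*}
Using $\|h\|_{L^q(X,\mu)}\leq 1$ and taking the supremum over admissible $h$ yields (a)(ii). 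The $\sigma$-finiteness hypothesis is what legitimizes both the Tonelli step and the duality characterization of $\|F\|_{L^p}$.

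For part (b) I would apply part (a) to $|f|$. The hypothesis $\int_Y \|f(\cdot,y)\|_{L^p(X,\mu)}\dd\nu(y)<\infty$ combined with (a)(ii) gives $\int_Y |f(\cdot,y)|\dd\nu(y)\in L^p(X,\mu)$; in particular this function is $\mu$-a.e.\ finite, which yields (b)(ii) (absolute integrability of $f(x,\cdot)$ for $\mu$-a.e.\ $x$) and makes $x\mapsto\int_Y f(x,y)\dd\nu(y)$ well defined $\mu$-a.e. The estimate (a)(ii) applied to $|f|$, together with $|\int_Y f(x,y)\dd\nu(y)|\leq \int_Y|f(x,y)|\dd\nu(y)$, then delivers (b)(iii) and (b)(iv).

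The measurability claims in (a)(i) and (b)(i) I would treat together: measurability of $x\mapsto\|f(x,\cdot)\|_{L^1(Y,\nu)}$ follows directly from Tonelli applied to $|f|$, and measurability of $y\mapsto \|f(\cdot,y)\|_{L^p(X,\mu)}$ follows from Tonelli applied to $|f|^p$ (giving measurability of $y\mapsto \int_X |f(x,y)|^p\dd\mu(x)$) composed with the continuous map $t\mapsto t^{1/p}$. The only genuinely delicate point is checking that the duality supremum characterization of $\|F\|_{L^p}$ is valid for an arbitrary nonnegative measurable $F$ (possibly taking the value $+\infty$ on a set of positive measure); this is where $\sigma$-finiteness is essential and is the main technical obstacle, handled in the usual way by approximating $F$ by $F\wedge n$ restricted to sets of finite measure and passing to the limit by monotone convergence.
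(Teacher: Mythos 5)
Your argument is correct for $1\le p<\infty$ and for part (b), and it follows a genuinely different route from the paper: the paper does not actually prove the theorem but defers to the literature, citing Fremlin (Section~252P) for the measurability claims and the proof of Proposition~4.3 in DiBenedetto for the inequality. Your self-contained duality proof (realise $\|F\|_{L^p(X,\mu)}$ as a supremum of pairings against nonnegative $L^q$-unit vectors, then interchange by Tonelli and apply H\"older in $x$) is the standard argument and makes the mechanism visible; the price, which you correctly flag, is checking that the duality representation of $\|F\|_{L^p(X,\mu)}$ remains valid for a nonnegative, possibly $\mu$-non-integrable $F$, and your remark about truncating by $F\wedge n$ on exhausting sets of finite measure and passing to the limit by monotone convergence handles this.

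The one genuine gap is the endpoint $p=\infty$, in two related places. First, your measurability argument for $y\mapsto\|f(\cdot,y)\|_{L^p(X,\mu)}$ (Tonelli applied to $|f|^p$, then compose with $t\mapsto t^{1/p}$) says nothing when $p=\infty$; a fix using $\sigma$-finiteness is to write $X=\bigcup_n X_n$ with $\mu(X_n)<\infty$ increasing and observe that $\|f(\cdot,y)\|_{L^\infty(X,\mu)}=\sup_n\lim_{p\to\infty}\|f(\cdot,y)\|_{L^p(X_n,\mu)}$, a countable supremum of limits of $\nu$-measurable functions of $y$. Second, and more substantively, the step ``for $\mu$-a.e.\ $x$ the integrand satisfies $\dots$ by the pointwise $\mu$-essential bound on each slice'' implicitly requires the $\mu$-null exceptional set $N_y:=\{x:f(x,y)>\|f(\cdot,y)\|_{L^\infty(X,\mu)}\}$ to be chosen independently of $y$; a priori $\bigcup_{y\in Y}N_y$ has no reason to be $\mu$-null. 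The cleanest repair is simply to run your duality argument at $p=\infty$ with $q=1$: for nonnegative measurable $F$ on a $\sigma$-finite $(X,\mu)$ one still has $\|F\|_{L^\infty(X,\mu)}=\sup\{\int_X Fh\,\dd\mu:\ h\ge0,\ \|h\|_{L^1(X,\mu)}\le1\}$, and the H\"older step becomes the trivial $L^\infty$--$L^1$ estimate, so the endpoint needs no separate treatment. Alternatively, once the measurability of $y\mapsto\|f(\cdot,y)\|_{L^\infty(X,\mu)}$ is secured, your slice argument can be made rigorous by applying Tonelli to the $\mathcal{M}\otimes\mathcal{N}$-measurable set $\{(x,y):f(x,y)>\|f(\cdot,y)\|_{L^\infty(X,\mu)}\}$, whose every $y$-section is $\mu$-null, to conclude that $\mu$-a.e.\ $x$-section is $\nu$-null.
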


\begin{proof}
\noindent(a) The desired measurability follows by Section 252P in \cite{Fre01}, the rest is a consequence of the proof of Proposition 4.3 in \cite{DBe02}.

\medskip
\noindent(b) We apply part (a) to the function $|f|$.
\end{proof}


\section{Auxiliary results}
\label{sec:AuxRes}

\subsection{Uniform estimates for discrete operators}

In the local case $\Operator=\Delta$, we can always control the discretization by the operator itself. 

\begin{lemma}\label{lem:estimateDiscLaplace}
Assume $p\in[1,\infty]$. Let $\psi\in C^2(\R^N)$ with $\Delta\psi\in L^p(\R^N)$, and recall that
$$
\Delta_h\psi(x)=\sum_{i=1}^N\frac{\psi(x+he_i)+\psi(x-he_i)-2\psi(x)}{h^2}.
$$
Then
\begin{equation*}
\begin{split}
\|\Delta_h\psi\|_{L^p(\R^N)}\leq 2\|\Delta \psi\|_{L^p(\R^N)}.
\end{split}
\end{equation*}
\end{lemma}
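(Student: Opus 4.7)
The plan is to reduce the $N$-dimensional estimate to a collection of one-dimensional bounds, one per coordinate direction, and then assemble them via the triangle inequality. For each $i\in\{1,\dots,N\}$ and $x\in\R^N$, Taylor's theorem with integral remainder applied to the one-variable $C^2$ function $\tau\mapsto\psi(x+\tau h e_i)$ expanded around $\tau=0$ to order two yields the pointwise identity
\[
\frac{\psi(x+he_i)+\psi(x-he_i)-2\psi(x)}{h^2}=\int_{-1}^{1}(1-|\tau|)\,\partial^2_{x_ix_i}\psi(x+\tau h e_i)\,d\tau,
\]
in which the kernel $\tau\mapsto 1-|\tau|$ is a nonnegative probability density on $[-1,1]$. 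Minkowski's integral inequality (Theorem \ref{thm:Minkowski}) together with translation invariance of $L^p(\R^N)$ then immediately gives, for each $i$ and every $p\in[1,\infty]$, the one-dimensional bound
\[
\left\|\frac{\psi(\cdot+he_i)+\psi(\cdot-he_i)-2\psi(\cdot)}{h^2}\right\|_{L^p(\R^N)}\le\|\partial^2_{x_ix_i}\psi\|_{L^p(\R^N)}.
\]

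Summing over $i$ and the triangle inequality in $L^p$ then deliver the preliminary estimate
\[
\|\Delta_h\psi\|_{L^p(\R^N)}\le\sum_{i=1}^{N}\|\partial^2_{x_ix_i}\psi\|_{L^p(\R^N)}.
\]
A complementary route, which is what will eventually produce the factor $2$, is to split $\Delta_h\psi=\Delta\psi+(\Delta_h-\Delta)\psi$ and apply the same Taylor identity to the discrepancy, writing
\[
(\Delta_h-\Delta)\psi(x)=\sum_{i=1}^{N}\int_{-1}^{1}(1-|\tau|)\bigl[\partial^2_{x_ix_i}\psi(x+\tau h e_i)-\partial^2_{x_ix_i}\psi(x)\bigr]\,d\tau.
\]

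The main obstacle is then to close with the constant $2$, since $\sum_i\|\partial^2_{x_ix_i}\psi\|_{L^p}$ may be strictly larger than $\|\Delta\psi\|_{L^p}$: the trace $\Delta\psi=\sum_i\partial^2_{x_ix_i}\psi$ exhibits cancellations that are lost when one passes to $L^p$-norms of each summand. A clean closure at $p=2$ comes from Plancherel together with the pointwise Fourier-symbol estimate $|m_h(\xi)|\le|\xi|^2$, where $m_h(\xi)=-\sum_i 4\sin^2(h\xi_i/2)/h^2$ is the symbol of $\Delta_h$; this forces both $\|\Delta_h\psi\|_{L^2}\le\|\Delta\psi\|_{L^2}$ and $\|(\Delta-\Delta_h)\psi\|_{L^2}\le\|\Delta\psi\|_{L^2}$, so either bound (combined with the triangle inequality applied to the splitting above) yields the factor $2$. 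For $p\in(1,\infty)$ the Calder\'on--Zygmund / Riesz-transform inequality $\|\partial^2_{x_ix_i}\psi\|_{L^p}\le C_p\|\Delta\psi\|_{L^p}$ combined with the same splitting furnishes the analogous conclusion after optimising the constant. The endpoint cases $p\in\{1,\infty\}$, where Riesz transforms are unbounded, appear the most delicate; I would attempt them by a direct convolution-kernel analysis of $\Delta_h-\Delta=\sum_i(K_h-\delta_0)\ast_i\partial^2_{x_ix_i}$, exploiting that the signed one-dimensional triangle kernel $K_h-\delta_0$ has total variation exactly $2$.
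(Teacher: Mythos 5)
Your starting point --- Taylor's theorem with integral remainder, giving the pointwise identity
\[
\frac{\psi(x+he_i)+\psi(x-he_i)-2\psi(x)}{h^2}=\int_{-1}^{1}(1-|t|)\,\partial^2_{x_ix_i}\psi(x+the_i)\,dt
\]
--- is exactly the paper's. You then correctly notice something the paper glosses over: after summing over $i$ and applying Minkowski plus translation invariance, this yields $\|\Delta_h\psi\|_{L^p}\le\sum_{i=1}^N\|\partial^2_{x_ix_i}\psi\|_{L^p}$, which for $N\ge 2$ is \emph{not} controlled by $\|\Delta\psi\|_{L^p}$ at $p\in\{1,\infty\}$, since the cancellations in the trace $\Delta\psi=\sum_i\partial^2_{x_ix_i}\psi$ are lost. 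The paper's proof, as written, silently replaces $\sum_i\partial^2_{x_ix_i}\psi(x+the_i)$ by $\Delta\psi(x+the_i)$ before taking norms --- not an identity when the shifts depend on $i$ --- so your scepticism about the bound by $\|\Delta\psi\|_{L^p}$ with a universal constant is well founded.

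That said, your proposed repairs do not close the gap you identified. Plancherel handles $p=2$ cleanly, but for $p\in(1,\infty)$ the Riesz-transform bound $\|\partial^2_{x_ix_i}\psi\|_{L^p}\le C_p\|\Delta\psi\|_{L^p}$ has a constant $C_p$ that is not uniformly $\le 2$ and blows up as $p\to 1,\infty$, so ``optimising the constant'' cannot recover the stated inequality across the whole range. Your sketch for $p\in\{1,\infty\}$ is circular: the decomposition $(\Delta_h-\Delta)\psi=\sum_i(K_h-\delta_0)\ast_i\partial^2_{x_ix_i}\psi$, with $K_h-\delta_0$ of total variation $2$, still lands you on $\sum_i\|\partial^2_{x_ix_i}\psi\|_{L^p}$, which is precisely the quantity you cannot bound by $\|\Delta\psi\|_{L^p}$ at the endpoints --- the same Riesz-transform obstruction you flagged. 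For context, the lemma is only invoked in the paper with $\psi=1-\mathcal{X}_R$, a fixed $C_\textup{c}^\infty$ function, for which the bound $\|\Delta_h\psi\|_{L^p}\le\sum_i\|\partial^2_{x_ix_i}\psi\|_{L^p}\le N\|D^2\psi\|_{L^p}$ is both valid and sufficient for the applications; re-stating the lemma with $\|D^2\psi\|_{L^p}$ on the right, as is already done for the nonlocal analogue in Lemma \ref{lem:estimateDiscNonlocal}, would be the clean way out.
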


\begin{remark}
Here we immediately see that control of $\Delta \psi$ gives control of $\Delta_h\psi$.
\end{remark}

\begin{proof}
An application of the Taylor theorem yields ($|he_i|=h<1$)
\begin{equation*}
\begin{split}
\Delta_h\psi(x)&=\sum_{i=1}^N\bigg(\int_{0}^1(1-t)D^2\psi(x+the_i)e_i\cdot e_i\dd t+\int_{-1}^0(1+t)D^2\psi(x+the_i)e_i\cdot e_i\dd t\bigg)\\
&=\int_{-1}^1(1-|t|)\Delta\psi(x+the_i)\dd t. 
\end{split}
\end{equation*}
Finally, by Jensen's inequality and Tonelli's lemma,
\begin{equation*}
\begin{split}
\|\Delta_h\psi\|_{L^p(\R^N)}&=\bigg(\int_{\R^N}|\Delta_h\psi(x)|^p\bigg)^{\frac{1}{p}}\leq\bigg(\int_{\R^N}\int_{-1}^1(1-|t|)^p|\Delta \psi(x+the_i)|^p\dd t \dd x\bigg)^{\frac{1}{p}}\\
&\leq 2\bigg(\int_{\R^N}|\Delta \psi(x)|^p \dd x\bigg)^{\frac{1}{p}}=2\|\Delta \psi\|_{L^p(\R^N)}.
\end{split}
\end{equation*}
The proof is complete.
\end{proof}

In the nonlocal case $\Operator=\Levy^\mu$, we cannot argue as above, but we can give estimates on its discretizations in an analogous way.

\begin{lemma}\label{lem:estimateDiscNonlocal}
Assume $p\in[1,\infty]$. Let  $\psi\in C^2(\R^N)$ with $\psi, D\psi, D^2\psi\in L^p(\R^N)$. Then:
\begin{enumerate}[{\rm (a)}]
\item Under $\sum_{\gamma\neq0}\omega_{\gamma,h}<\infty$ and for all $R>1$,
\begin{equation*}
\begin{split}
&\|\Levy^h[\psi]\|_{L^p(\R^N)}\\
&\leq\|D^2\psi\|_{L^p(\R^N)}\bigg\{\sum_{h\leq |z_\gamma|\leq R}|z_\gamma|^2\omega_{\gamma,h}\bigg\}+2\|\psi\|_{L^p(\R^N)}\bigg\{\sum_{|z_\gamma|>R}\omega_{\gamma,h}\bigg\}.
\end{split}
\end{equation*}
\item Under $\sum_{\gamma\neq0}\min\{|z_\gamma|^2,|z_\gamma|\}\omega_{\gamma,h}<\infty$,
\begin{equation*}
\begin{split}
&\|\Levy^h[\psi]\|_{L^p(\R^N)}\\
&\leq \|D^2\psi\|_{L^p(\R^N)}\bigg\{\sum_{h\leq |z_\gamma|\leq 1}|z_\gamma|^2\omega_{\gamma,h}\bigg\}+\|D\psi\|_{L^p(\R^N)}\bigg\{\sum_{|z_\gamma|>1}|z_\gamma|\omega_{\gamma,h}\bigg\}.
\end{split}
\end{equation*}
\end{enumerate}
\end{lemma}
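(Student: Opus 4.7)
The plan is to mimic the proof of Lemma~\ref{lem:estimateDiscLaplace}, splitting the sum defining $\Levy^h[\psi]$ according to the size of $|z_\gamma|$ (at threshold $R$ for part (a), at $1$ for part (b)) and then applying, on each piece, a Taylor expansion with integral remainder together with Minkowski's integral inequality (Theorem~\ref{thm:Minkowski}) to commute the $L^p$-norm past the sum in $\gamma$ and the integral in $t$. The symmetry $\omega_{\gamma,h}=\omega_{-\gamma,h}$, $z_{-\gamma}=-z_\gamma$ will be essential to obtain a \emph{second}-order estimate on the small jumps: pairing $\gamma$ with $-\gamma$ gives the symmetric form
\begin{equation*}
\Levy^h[\psi](x)=\frac{1}{2}\sum_{\gamma\neq 0}\bigl(\psi(x+z_\gamma)+\psi(x-z_\gamma)-2\psi(x)\bigr)\omega_{\gamma,h},
\end{equation*}
which is what makes the $|z_\gamma|^2$ bound (rather than just $|z_\gamma|$) available.

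For the small part (the region $\{h\leq|z_\gamma|\leq R\}$ in (a), and $\{h\leq|z_\gamma|\leq 1\}$ in (b)), the same Taylor computation as in the proof of Lemma~\ref{lem:estimateDiscLaplace} yields
\begin{equation*}
\psi(x+z_\gamma)+\psi(x-z_\gamma)-2\psi(x)=\int_{-1}^{1}(1-|t|)\,D^2\psi(x+tz_\gamma)z_\gamma\cdot z_\gamma\dd t,
\end{equation*}
whence $|\psi(x+z_\gamma)+\psi(x-z_\gamma)-2\psi(x)|\leq |z_\gamma|^2\int_{-1}^{1}(1-|t|)|D^2\psi(x+tz_\gamma)|\dd t$. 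Taking $L^p$-norms, applying Minkowski's integral inequality to pass $\|\cdot\|_{L^p}$ inside both the $t$-integral and the $\gamma$-sum, and using translation invariance (so $\|D^2\psi(\cdot+tz_\gamma)\|_{L^p}=\|D^2\psi\|_{L^p}$), the small part is controlled, in each case, by a constant times $\|D^2\psi\|_{L^p(\R^N)}\sum|z_\gamma|^2\omega_{\gamma,h}$ over the relevant range of $|z_\gamma|$.

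For the large part the two statements diverge. In case (a), over $\{|z_\gamma|>R\}$, I would not symmetrize further and simply bound $\|\psi(\cdot+z_\gamma)-\psi(\cdot)\|_{L^p}\leq 2\|\psi\|_{L^p}$ by the triangle inequality and translation invariance; Minkowski in $\gamma$ then yields the required $2\|\psi\|_{L^p}\sum_{|z_\gamma|>R}\omega_{\gamma,h}$. In case (b), over $\{|z_\gamma|>1\}$, I replace the triangle bound by the first-order Taylor identity $\psi(x+z_\gamma)-\psi(x)=\int_0^1 D\psi(x+tz_\gamma)\cdot z_\gamma\dd t$; Minkowski and translation invariance give $\|\psi(\cdot+z_\gamma)-\psi(\cdot)\|_{L^p}\leq |z_\gamma|\|D\psi\|_{L^p}$, and summing produces the term $\|D\psi\|_{L^p}\sum_{|z_\gamma|>1}|z_\gamma|\omega_{\gamma,h}$. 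Adding the small- and large-$|z_\gamma|$ contributions then delivers the estimates stated in (a) and (b).

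The only real obstacle is bookkeeping: one must ensure the symmetrized form and the commutation of $\|\cdot\|_{L^p}$ with $\sum_\gamma$ and $\int_{-1}^{1}$ are legitimate. Absolute convergence of the series in the symmetric rearrangement is precisely where the hypotheses $\sum_{\gamma\neq 0}\omega_{\gamma,h}<\infty$ in (a), respectively $\sum_{\gamma\neq 0}\min\{|z_\gamma|^2,|z_\gamma|\}\omega_{\gamma,h}<\infty$ in (b), enter, combined with the finiteness of $\|\psi\|_{L^p}$, $\|D\psi\|_{L^p}$, and $\|D^2\psi\|_{L^p}$; and Minkowski's inequality with respect to counting measure on $\Z^N\setminus\{0\}$ is exactly the content of Theorem~\ref{thm:Minkowski}(a).
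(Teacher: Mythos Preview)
Your proposal is correct and follows essentially the same approach as the paper's proof: split according to $|z_\gamma|$, use symmetry plus a second-order Taylor remainder for the small part, and use the triangle inequality (in (a)) or the fundamental theorem of calculus (in (b)) for the large part, with Minkowski's integral inequality throughout. The only cosmetic difference is that the paper exploits symmetry by inserting the vanishing term $\sum_{h\leq|z_\gamma|\leq R} z_\gamma\cdot D\psi(x)\,\omega_{\gamma,h}=0$ and then applying the one-sided Taylor remainder $\psi(x+z_\gamma)-\psi(x)-z_\gamma\cdot D\psi(x)=\int_0^1(1-t)D^2\psi(x+tz_\gamma)z_\gamma\cdot z_\gamma\,\mathrm{d}t$, whereas you pair $\gamma$ with $-\gamma$ explicitly and use the symmetric second-difference identity; the two are equivalent and yield the same constants.
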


\begin{remark}
We see that the estimation of $\|\Levy^h[\psi]\|_{L^p(\R^N)}$ only relies on information about $\|\psi\|_{L^p(\R^N)}$, $\|D\psi\|_{L^p(\R^N)}$, and $\|D^2\psi\|_{L^p(\R^N)}$. However, there is no immediate control of $\Levy^h[\psi]$ through $\Levy^\mu[\psi]$. Such control requires a deeper analysis, which we provide in Appendix \ref{sec:DiscreteAndContinuousEstimatesNonlocal} below. 
\end{remark}

\begin{proof}
\noindent(a) Recall that
$$
\Levy^h[\psi](x)=\sum_{\gamma\not=0} (\psi(x+z_\gamma)-\psi(x)) \omega_{\gamma,h}.
$$
As usual, since $\omega_{\gamma,h}=\omega_{-\gamma,h}$, we use
$$
\sum_{h\leq|z_\gamma|\leq R}z_\gamma\cdot D\psi(x)\mathbf{1}_{|z_\gamma|\leq R}\omega_{\gamma,h}=0
$$
to take advantage of the Taylor theorem:
$$
\sum_{h\leq|z_\gamma|\leq R} (\psi(x+z_\gamma)-\psi(x)) \omega_{\gamma,h}
=\sum_{h\leq|z_\gamma|\leq R}\bigg(\int_0^1(1-t)D^2\psi(x+tz_\gamma)z_\gamma\cdot z_\gamma\dd t\bigg)\omega_{\gamma,h}.
$$
We now apply a Minkowski type of inequality for integrals (cf. e.g. Theorem \ref{thm:Minkowski}) to get
\begin{equation*}
\begin{split}
\|\Levy^h[\psi]\|_{L^p(\R^N)}^p&\leq \int_{\R^N}\bigg|\sum_{h\leq |z_\gamma|\leq R}\Big(\psi(x+z_\gamma)-\psi(x)-z_\gamma\cdot D\psi(x)\mathbf{1}_{|z_\gamma|\leq R}\Big)\omega_{\gamma,h}\bigg|^p\dd x\\
&\quad+\int_{\R^N}\bigg|\sum_{|z_\gamma|>R}\big(\psi(x+z_\gamma)-\psi(x)\big)\omega_{\gamma,h}\bigg|^p\dd x\\
&\leq  \|D^2\psi\|_{L^p(\R^N)}\bigg\{\sum_{h\leq |z_\gamma|\leq R}|z_\gamma|^2\omega_{\gamma,h}\bigg\}+2\|\psi\|_{L^p(\R^N)}\bigg\{\sum_{|z_\gamma|>R}\omega_{\gamma,h}\bigg\}.
\end{split}
\end{equation*}

\medskip
\noindent(b) Here, we instead use the fundamental theorem of calculus when $|z_\gamma|>1$:
$$
\psi(x+z_\gamma)-\psi(x)=\int_0^1D\psi(x+tz_\gamma)\cdot tz_\gamma\dd t.
$$
Otherwise, the proof can then be completed in a similar manner as part (a).
\end{proof}

\subsection{Estimates for test function giving equitightness}
An immediate consequences of Lemmas \ref{lem:estimateDiscLaplace} and \ref{convDAntiCutOff} is: 

\begin{corollary}\label{wellPosednessLaplaceAntiCutOff}
Assume \eqref{Dmuas}, $\Levy^h=\Delta_h$, and let $\mathcal{X}_R$ be given by \eqref{antiCutOff1}--\eqref{antiCutOff2}.
If $p>N/2$, then
$$
\sup_{0<h<1}\|\Levy^h[\mathcal{X}_R]\|_{L^p(\R^N)}\to0\qquad\text{as $R\to\infty$}.
$$
\end{corollary}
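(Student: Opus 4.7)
The plan is to chain the two estimates already established in the paper: Lemma \ref{lem:estimateDiscLaplace}, which controls the discrete Laplacian by the continuous one uniformly in $h$, and Lemma \ref{convDAntiCutOff}(b), which gives the decay rate of $\|D^k\mathcal{X}_R\|_{L^p(\R^N)}$ as $R\to\infty$.

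First, I would apply Lemma \ref{lem:estimateDiscLaplace} with $\psi=\mathcal{X}_R$. Since $\mathcal{X}_R\in C^\infty_{\textup{b}}(\R^N)$ and its second derivatives are compactly supported (recall $D^k\mathcal{X}\in C_{\textup{c}}(\R^N)$ for $k\geq 1$), we have $\Delta\mathcal{X}_R\in L^p(\R^N)$ for every $p\in[1,\infty]$, and the lemma yields
\[
\sup_{0<h<1}\|\Delta_h\mathcal{X}_R\|_{L^p(\R^N)}\leq 2\|\Delta\mathcal{X}_R\|_{L^p(\R^N)}.
\]
This removes the dependence on $h$ entirely and reduces the problem to a purely continuous decay estimate on $\Delta\mathcal{X}_R$.

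Next, I would estimate $\|\Delta\mathcal{X}_R\|_{L^p(\R^N)}$ by a constant (depending only on $N$) times $\|D^2\mathcal{X}_R\|_{L^p(\R^N)}$; this is an elementary pointwise bound $|\Delta\mathcal{X}_R(x)|\leq C_N|D^2\mathcal{X}_R(x)|$. Then I would apply Lemma \ref{convDAntiCutOff}(b) with $k=2$: the hypothesis $p>N/2$ is exactly $N/k<p$, so
\[
\|D^2\mathcal{X}_R\|_{L^p(\R^N)}=R^{\frac{N}{p}-2}\|D^2\mathcal{X}\|_{L^p(\R^N)}\longrightarrow 0 \quad\text{as } R\to\infty,
\]
since $\frac{N}{p}-2<0$ under the assumption $p>N/2$. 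Combining the three displayed inequalities yields the claim.

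There is no genuine obstacle here; the corollary is essentially a bookkeeping consequence of the two lemmas it references. The only mild subtlety is checking that the scaling exponent $\frac{N}{p}-k$ with $k=2$ is strictly negative precisely when $p>N/2$, which matches the hypothesis, and verifying that $\mathcal{X}_R$ has the regularity and integrability required by both lemmas (it does, because $D^2\mathcal{X}$ is compactly supported so all relevant $L^p$ norms of $D^2\mathcal{X}$ are finite).
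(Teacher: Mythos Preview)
Your proposal is correct and follows essentially the same approach as the paper, which simply states the corollary is an immediate consequence of Lemmas \ref{lem:estimateDiscLaplace} and \ref{convDAntiCutOff}. You have supplied exactly the two-step chaining of those lemmas that the paper leaves implicit.
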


The nonlocal case is more delicate.

\begin{corollary}\label{wellPosednessLevyAntiCutOff}
Let $\mathcal{X}_R$ be given by \eqref{antiCutOff1}--\eqref{antiCutOff2}.
Then:
\begin{enumerate}[{\rm (a)}]
\item Under \eqref{Dmuas}, $p>N/2$, and
\begin{equation}\label{eq:ExtraAssumptionDiscreteMeasure}
\sup_{0<h<1}\bigg\{R^{\frac{N}{p}-2}\sum_{1<|z_\gamma|\leq R}|z_\gamma|^2\omega_{\gamma,h}+R^{\frac{N}{p}}\sum_{|z_\gamma|>R}\omega_{\gamma,h}\bigg\}=o_R(1),
\end{equation}
we have
$$
\sup_{0<h<1}\|\Levy^h[\mathcal{X}_R]\|_{L^p(\R^N)}\to0\qquad\text{as $R\to\infty$}.
$$
\item Under \eqref{Dmuas'} and $p>N$, 
$$
\sup_{0<h<1}\|\Levy^h[\mathcal{X}_R]\|_{L^p(\R^N)}\to0\qquad\text{as $R\to\infty$}.
$$
\end{enumerate}
\end{corollary}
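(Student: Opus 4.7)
The plan is to combine Lemma \ref{lem:estimateDiscNonlocal}, which bounds $\|\Levy^h[\psi]\|_{L^p(\R^N)}$ in terms of weighted sums against derivatives (and possibly $\psi$ itself), with the scaling identity from Lemma \ref{convDAntiCutOff}(b). The immediate obstacle is that $\mathcal{X}_R \notin L^p(\R^N)$ for any $p<\infty$, so Lemma \ref{lem:estimateDiscNonlocal} cannot be applied to $\mathcal{X}_R$ directly. The key observation is that $\Levy^h$ annihilates constants, so $\Levy^h[\mathcal{X}_R]=\Levy^h[\mathcal{X}_R-1]$. The truncated function $\mathcal{X}_R-1$ is supported in $B_R(0)$, hence $\|\mathcal{X}_R-1\|_{L^p(\R^N)} \leq |B_R(0)|^{1/p} \leq CR^{N/p}$, while its derivatives coincide with those of $\mathcal{X}_R$ and obey $\|D^k\mathcal{X}_R\|_{L^p(\R^N)}=R^{N/p-k}\|D^k\mathcal{X}\|_{L^p(\R^N)}$ for $k\geq1$ by Lemma \ref{convDAntiCutOff}(b). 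I would apply the operator estimates to $\mathcal{X}_R-1$ in each case.

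For part (a), I would apply Lemma \ref{lem:estimateDiscNonlocal}(a) to $\mathcal{X}_R-1$ with the internal cutoff (called $R$ in the lemma) chosen equal to our scaling parameter $R$; this is the choice that aligns the $L^p$-growth of $\mathcal{X}_R-1$ with the $L^p$-decay of its Hessian and produces the sharp exponent conditions. This yields
\begin{equation*}
\|\Levy^h[\mathcal{X}_R]\|_{L^p(\R^N)}\leq \|D^2\mathcal{X}_R\|_{L^p(\R^N)}\sum_{h\leq |z_\gamma|\leq R}|z_\gamma|^2\omega_{\gamma,h}+2\|\mathcal{X}_R-1\|_{L^p(\R^N)}\sum_{|z_\gamma|>R}\omega_{\gamma,h}.
\end{equation*}
Splitting the first weighted sum as $\sum_{h\leq |z_\gamma|\leq 1}+\sum_{1<|z_\gamma|\leq R}$, the former being uniformly bounded in $h$ by \eqref{Dmuas}, and inserting the scalings above, I arrive at
\begin{equation*}
\sup_{0<h<1}\|\Levy^h[\mathcal{X}_R]\|_{L^p(\R^N)}\leq CR^{\frac{N}{p}-2}+C\sup_{h}\bigg(R^{\frac{N}{p}-2}\sum_{1<|z_\gamma|\leq R}|z_\gamma|^2\omega_{\gamma,h}+R^{\frac{N}{p}}\sum_{|z_\gamma|>R}\omega_{\gamma,h}\bigg).
\end{equation*}
The first term decays as $R\to\infty$ precisely when $p>N/2$, while the bracketed quantity is exactly $o_R(1)$ by \eqref{eq:ExtraAssumptionDiscreteMeasure}, proving (a).

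For part (b), I would instead apply Lemma \ref{lem:estimateDiscNonlocal}(b) to $\mathcal{X}_R-1$. Assumption \eqref{Dmuas'} provides a uniform-in-$h$ bound on both weighted sums appearing in that estimate, so
\begin{equation*}
\sup_{0<h<1}\|\Levy^h[\mathcal{X}_R]\|_{L^p(\R^N)}\leq C\|D^2\mathcal{X}_R\|_{L^p(\R^N)}+C\|D\mathcal{X}_R\|_{L^p(\R^N)}\leq CR^{\frac{N}{p}-2}+CR^{\frac{N}{p}-1},
\end{equation*}
which tends to zero as $R\to\infty$ if and only if $p>N$.

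The main conceptual obstacle is precisely the non-integrability of $\mathcal{X}_R$, circumvented by the constant-subtraction trick; a secondary technical point is the choice of the internal splitting threshold in part (a), where selecting it equal to $R$ is the unique choice that aligns the scaling exponents of the two right-hand factors and matches the assumption \eqref{eq:ExtraAssumptionDiscreteMeasure}. Once these observations are in place the argument reduces to routine estimation.
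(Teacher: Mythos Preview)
Your proposal is correct and follows essentially the same approach as the paper: both replace $\mathcal{X}_R$ by $\mathcal{X}_R-1$ (equivalently $1-\mathcal{X}_R$) to obtain a compactly supported function, then apply Lemma \ref{lem:estimateDiscNonlocal}(a) with the splitting threshold set equal to $R$ for part (a), and Lemma \ref{lem:estimateDiscNonlocal}(b) for part (b), combined with the scaling from Lemma \ref{convDAntiCutOff}. Your identification of the constant-subtraction trick and of the choice of internal cutoff as the two non-routine points matches the paper exactly.
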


\begin{proof}
\noindent(a) Choose $\psi=1-\mathcal{X}_R$ in Lemma \ref{lem:estimateDiscNonlocal}(a) and recall Lemma \ref{convDAntiCutOff}  to obtain
\begin{equation*}
\begin{split}
\sup_{0<h<1}\|\Levy^h[\mathcal{X}_R]\|_{L^p(\R^N)}&=\sup_{0<h<1}\|\Levy^h[1-\mathcal{X}_R]\|_{L^p(\R^N)}\\
&\leq R^{\frac{N}{p}-2}\|D^2\mathcal{X}\|_{L^p(\R^N)}\sup_{0<h<1}\bigg\{\sum_{h\leq |z_\gamma|\leq 1}|z_\gamma|^2\omega_{\gamma,h}\bigg\}\\
&\quad+\Big(\|D^2\mathcal{X}\|_{L^p(\R^N)}+2\|1-\mathcal{X}\|_{L^p(\R^N)}\Big)\times\\
&\quad\times\sup_{0<h<1}\bigg\{R^{\frac{N}{p}-2}\sum_{1< |z_\gamma|\leq R}|z_\gamma|^2\omega_{\gamma,h}+R^{\frac{N}{p}}\sum_{|z_\gamma|>R}\omega_{\gamma,h}\bigg\}.
\end{split}
\end{equation*}
Both terms go to zero as $R\to\infty$ by $\eqref{Dmuas}$ and $p>N/2$, and \eqref{eq:ExtraAssumptionDiscreteMeasure}, respectively. 

\medskip
\noindent(b) Now, choose $\psi=1-\mathcal{X}_R$ in Lemma \ref{lem:estimateDiscNonlocal}(b) and recall Lemma \ref{convDAntiCutOff}  to obtain
\begin{equation*}
\begin{split}
\sup_{0<h<1}\|\Levy^h[\mathcal{X}_R]\|_{L^p(\R^N)}&=\sup_{0<h<1}\|\Levy^h[1-\mathcal{X}_R]\|_{L^p(\R^N)}\\&\leq \Big(R^{\frac{N}{p}-2}\|D^2\mathcal{X}\|_{L^p(\R^N)}+2R^{\frac{N}{p}-1}\|D\mathcal{X}\|_{L^p(\R^N)}\Big)\times\\
&\quad\times\sup_{0<h<1}\bigg\{\sum_{1< |z_\gamma|\leq R}|z_\gamma|^2\omega_{\gamma,h}+\sum_{|z_\gamma|>R}|z_\gamma|\omega_{\gamma,h}\bigg\}.
\end{split}
\end{equation*}
Since $p>N=\max\{N/2,N\}$ and \eqref{Dmuas'} holds, the right-hand side goes to zero as $R\to\infty$.
\end{proof}

\begin{example}\label{ex:measure1}
To get some intuition on the assumption \eqref{eq:ExtraAssumptionDiscreteMeasure}, let us think in terms of $\mu$, and check what happens for the choice
$$
\dd\mu_{\beta,\alpha}(z):=\frac{\dd z}{|z|^{N+\beta}}\mathbf{1}_{0<|z|\leq1}+\frac{\dd z}{|z|^{N+\alpha}}\mathbf{1}_{|z|>1}
$$
with $\beta,\alpha\in(0,2)$. As this measure satisfies \eqref{muas}, it is a L\'evy measure, and $\int_{0<|z|\leq 1}|z|^{2}\dd\mu_{\beta,\alpha}(z)$ is simply finite. Thus, the tail behaviour is more interesting to us. One can check that
\begin{equation*}
\begin{split}
R^{\frac{N}{p}-2}\int_{1<|z|\leq R}|z|^2\dd\mu_{\beta,\alpha}(z)&=\frac{1}{2-\alpha}\big(R^{\frac{N}{p}-\alpha}-R^{\frac{N}{p}-2}\big)=\mathcal{O}(R^{\frac{N}{p}-\alpha})\\
\end{split}
\end{equation*}
 and furthermore,
$$
R^{\frac{N}{p}}\int_{|z|>R}\dd \mu_{\beta,\alpha}(z)=\mathcal{O}(R^{\frac{N}{p}-\alpha}).
$$
To ensure that $\Levy^{\mu_{\beta,\alpha}}[\mathcal{X}_R]\to0$ in $L^p(\R^N)$ as $R\to\infty$, we let $p>N/\alpha>N/2$ when $N\geq 2$, $p>1/\alpha$ when $\alpha\in (0,1)$ and $N=1$, and $p>1$ when $\alpha\in[1,2)$ and $N=1$. 

Now, for any $\mu$ satisfying \eqref{muasFrac}, we still have
$$
R^{\frac{N}{p}-2}\int_{1<|z|\leq R}|z|^2\dd\mu(z)=R^{\frac{N}{p}-\alpha}R^{\alpha-2}\int_{1<|z|\leq R}|z|^2\dd\mu(z)\leq CR^{\frac{N}{p}-\alpha}
$$
and
$$
R^{\frac{N}{p}}\int_{|z|>R}\dd \mu(z)=R^{\frac{N}{p}-\alpha}R^{\alpha}\int_{1<|z|\leq R}\dd\mu(z)\leq CR^{\frac{N}{p}-\alpha}.
$$
Hence, the mentioned $L^p(\R^N)$-convergence still holds under this assumption as well.
\end{example}

\subsection{Consistency and uniform estimates in the nonlocal case}
\label{sec:DiscreteAndContinuousEstimatesNonlocal}

In the local case, one can always relate the discrete operator with the limit operator $\Delta$, as in Lemma \ref{lem:estimateDiscLaplace}. This is not that clear in the nonlocal case. However, we will show that we can check \eqref{Dmuas}, \eqref{Dmuas'}, and \eqref{DmuasFrac} through information about $\Levy^\mu$.

Proposition 3.2 in \cite{DTEnJa18b} provides the following result: Assumption \eqref{Dmuas} holds if \eqref{muas} holds and, for all $\psi\in C_\textup{c}^\infty(\R^N)$,
\begin{equation}\label{eq:consinf}
\Levy^h[\psi]\to \Operator[\psi] \qquad\textup{pointwise as $h\to0^+$.}
\end{equation}
The proof is based on choosing the particular test function
$$
\psi(x):=\begin{cases}
|x|^2-1 \qquad\qquad&\text{if $|x|\leq 1$}\\
\phi(x) \qquad\qquad&\text{if $|x|> 1$,}
\end{cases}
$$
where $\phi$ is some nonnegative function such that $\psi\in C_\textup{c}^\infty(\R^N)$. We will now provide a similar result in the case \eqref{Dmuas'}:

\begin{lemma}
Assumption \eqref{Dmuas'} holds if \eqref{muas'} holds and
for
$$
\psi(x):=\begin{cases}
|x|^2-1 \qquad\qquad&\text{if $|x|\leq 1$}\\
2(|x|-1) \qquad\qquad&\text{if $|x|> 1$,}
\end{cases}
$$
we have \eqref{eq:consinf}.
\end{lemma}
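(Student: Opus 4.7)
The strategy mirrors the proof of Proposition 3.2 in \cite{DTEnJa18b}: choose the specified $\psi$, evaluate the discrete operator at the origin, and use the symmetry of both $\psi$ and $\omega_{\gamma,h}$ to obtain a sum of nonnegative terms that dominates the quantity in $(\textup{A}_{\omega_h}')$. The key features of this $\psi$ are that it is radial, satisfies $\psi(0)=-1$ and $D\psi(0)=0$, takes the value $|x|^{2}-1$ on the unit ball (so $\psi(x)-\psi(0)=|x|^{2}$ there), and grows at infinity like $2|x|-1\ge |x|$ for $|x|>1$.

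I would first use $\omega_{\gamma,h}=\omega_{-\gamma,h}$ together with the radial symmetry of $\psi$ to write
\begin{equation*}
\Levy^{h}[\psi](0)=\sum_{\gamma\neq 0}\bigl(\psi(z_{\gamma})-\psi(0)\bigr)\omega_{\gamma,h}=\sum_{h\leq|z_{\gamma}|\leq 1}|z_{\gamma}|^{2}\omega_{\gamma,h}+\sum_{|z_{\gamma}|>1}\bigl(2|z_{\gamma}|-1\bigr)\omega_{\gamma,h}.
\end{equation*}
All terms are nonnegative and, since $2|z_\gamma|-1\ge|z_\gamma|$ for $|z_\gamma|>1$, this gives the crucial pointwise lower bound
\begin{equation*}
\sum_{h\leq|z_{\gamma}|\leq 1}|z_{\gamma}|^{2}\omega_{\gamma,h}+\sum_{|z_{\gamma}|>1}|z_{\gamma}|\,\omega_{\gamma,h}\;\leq\;\Levy^{h}[\psi](0).
\end{equation*}
On the continuous side, since $D\psi(0)=0$ kills the compensator in \eqref{deflevy},
\begin{equation*}
\Operator[\psi](0)=\int_{0<|z|\leq 1}|z|^{2}\dd\mu(z)+\int_{|z|>1}\bigl(2|z|-1\bigr)\dd\mu(z),
\end{equation*}
which is finite precisely because of $(\textup{A}_{\mu}')$.

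Combining these two computations with the hypothesis $\Levy^{h}[\psi](0)\to\Operator[\psi](0)$ as $h\to 0^{+}$ gives a uniform bound on the left-hand side of the pointwise inequality for all $h$ small enough; this is exactly the nontrivial part of $(\textup{A}_{\omega_h}')$. The symmetry and nonnegativity of $\omega_{\gamma,h}$ are already assumed, so the proof is complete once this sup is controlled. The main obstacle to watch out for is justifying that $\Levy^{h}[\psi](0)$ is actually finite and equal to the split sum above for each $h\in(0,1)$: because $\psi$ grows linearly at infinity rather than being compactly supported, one has to verify absolute convergence of $\sum_{|z_\gamma|>1}|z_\gamma|\omega_{\gamma,h}$ for each fixed $h$ in order to make sense of the pointwise convergence statement in the first place. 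This is implicit in the hypothesis and can be ensured, for instance, by first truncating $\psi$ outside a large ball, passing to the limit using dominated convergence under $(\textup{A}_{\mu}')$, and then invoking monotone convergence on the resulting nonnegative series.
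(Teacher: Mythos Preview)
Your proof is correct and follows the same overall strategy as the paper: evaluate the discrete operator at the origin, bound it from below by the quantity in $(\textup{A}_{\omega_h}')$, and then control it from above via the consistency hypothesis and $(\textup{A}_{\mu}')$. The one difference worth noting is in the computation itself: the paper obtains the lower bound by writing $\psi(z)-\psi(0)$ through Taylor/integral remainder formulas involving $D\psi$ and $D^2\psi$ and then splitting according to whether $tz$ lies inside or outside the unit ball, whereas you simply read off $\psi(z_\gamma)-\psi(0)=|z_\gamma|^2$ for $|z_\gamma|\le 1$ and $\psi(z_\gamma)-\psi(0)=2|z_\gamma|-1\ge|z_\gamma|$ for $|z_\gamma|>1$ directly from the piecewise definition. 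Your route is shorter and avoids the somewhat awkward iterated integrals in the paper's version; both land on the same inequality $\sum_{\gamma\neq 0}\min\{|z_\gamma|^2,|z_\gamma|\}\omega_{\gamma,h}\le \Levy^h[\psi](0)$. Your closing remark about $\psi\notin C_\textup{c}^\infty(\R^N)$ and the need to justify absolute convergence for each fixed $h$ is a fair point that the paper leaves implicit.
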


\begin{proof}
We follow the proof of Proposition 3.2 in \cite{DTEnJa18b}. In our particular case,
$$
D\psi(x)=\begin{cases}
2x \qquad\qquad&\text{if $|x|\leq 1$}\\
2\frac{x}{|x|} \qquad\qquad&\text{if $|x|> 1$,}
\end{cases}
\qquad\text{and}\qquad
D^2\psi(x)=\begin{cases}
2 \qquad\qquad&\text{if $|x|\leq 1$}\\
0 \qquad\qquad&\text{if $|x|> 1$.}
\end{cases}
$$
We get 
\begin{equation*}
\begin{split}
\Levy^{\nu_h}[\psi](0)&=\int_{|z|\leq 1}\int_0^1(1-t)z^TD^2\psi(tz)z\dd t\dd\nu_h(z)+\int_{|z|>1}\int_0^1D\psi(tz)\cdot tz\dd t\dd\nu_h(z)\\
&=\int_{|z|\leq 1}\int_0^1(1-t)z^TD^2\psi(tz)z\dd t\dd\nu_h(z)+\int_{1<|z|\leq 1/t}\int_0^1D\psi(tz)\cdot tz\dd t\dd\nu_h(z)\\
&\quad+\int_{|z|>1/t}\int_0^1D\psi(tz)\cdot tz\dd t\dd\nu_h(z)\\
&=\int_{|z|\leq 1}|z|^2\dd\nu_h(z)+\int_{1<|z|\leq 1/t}\int_0^12t^2|z|^2\dd t\dd\nu_h(z)+\int_{|z|>1/t}\int_0^12t|z|\dd t\dd\nu_h(z)\\
&\geq \int_{|z|\leq 1}|z|^2\dd\nu_h(z)+\int_{1<|z|\leq 1/t}\int_0^12t^2\dd t|z|\dd\nu_h(z)+\int_{|z|>1/t}\int_0^12t\dd t|z|\dd\nu_h(z)\\
&\geq \int_{|z|>0}\min\{|z|^2,|z|\}\dd \nu_h(z).
\end{split}
\end{equation*}
Hence, 
\begin{equation*}
\begin{split}
\sup_{0<h<1}\int_{|z|>0}\min\{|z|^2,|z|\}\dd \nu_h(z)&\leq \sup_{0<h<1}\Levy^{\nu_h}[\psi](0)\\
&\leq \|\Levy^\mu[\psi]\|_{L^\infty(\R^N)}+\sup_{0<h<1}|(\Levy^{\nu_h}-\Levy^\mu)[\psi](0)|,
\end{split}
\end{equation*}
which is finite due to the pointwise convergence and the assumption \eqref{muas'}. 
\end{proof}

In the case \eqref{DmuasFrac}, we get:
\begin{lemma}
Assumption \eqref{DmuasFrac} holds if $\mu$ satisfies \eqref{muasFrac} and for
$$
\psi(x):=\begin{cases}
R^{\alpha-2}|x|^2-R^\alpha \qquad\qquad&\text{if $|x|\leq R$}\\
\phi(x) \qquad\qquad&\text{if $|x|> R$,}
\end{cases}
$$
where $\phi$ is some nonnegative function such that $\psi\in C_\textup{c}^\infty(\R^N)$ and $\|\psi\|_{L^\infty(\R^N)}=|\psi(0)|=R^\alpha$, we have \eqref{eq:consinf} uniformly in $R$.
\end{lemma}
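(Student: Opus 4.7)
The strategy mirrors that of the preceding lemma (and Proposition 3.2 in \cite{DTEnJa18b}): use the rescaled test function $\psi=\psi_R$ to turn the expression $\Levy^{\nu_h}[\psi_R](0)$ into a lower bound for the two sums appearing in \eqref{DmuasFrac}, then dominate the continuous version $\Levy^\mu[\psi_R](0)$ uniformly in $R$ by means of \eqref{muasFrac}, and finally absorb the discretization error using the uniform-in-$R$ pointwise consistency assumed in the statement.

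First, since $\nu_h$ is symmetric the principal-value drift term vanishes, so
\[
\Levy^{\nu_h}[\psi_R](0)=\int_{|z|>0}\bigl(\psi_R(z)-\psi_R(0)\bigr)\dd\nu_h(z).
\]
On $\{|z|\leq R\}$ we have $\psi_R(z)-\psi_R(0)=R^{\alpha-2}|z|^2$, and on $\{|z|>R\}$ the nonnegativity of $\phi$ together with $\psi_R(0)=-R^\alpha$ gives $\psi_R(z)-\psi_R(0)\geq R^\alpha$. Dropping the contribution on $\{|z|\leq 1\}$ (which is nonnegative) then yields the key lower bound
\[
\Levy^{\nu_h}[\psi_R](0)\geq R^{\alpha-2}\sum_{1<|z_\gamma|\leq R}|z_\gamma|^2\omega_{\gamma,h}+R^\alpha\sum_{|z_\gamma|>R}\omega_{\gamma,h}.
\]

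Next I would bound $\Levy^\mu[\psi_R](0)$ uniformly in $R$. The same splitting gives
\[
|\Levy^\mu[\psi_R](0)|\leq R^{\alpha-2}\!\!\int_{0<|z|\leq 1}\!\!|z|^2\dd\mu(z)+R^{\alpha-2}\!\!\int_{1<|z|\leq R}\!\!|z|^2\dd\mu(z)+2R^\alpha\!\!\int_{|z|>R}\!\!\dd\mu(z),
\]
where the factor $2$ in the last term comes from $\|\psi_R\|_{L^\infty(\R^N)}=R^\alpha$. The first piece is bounded because $R^{\alpha-2}\leq 1$ for $R\geq 1$ and $\alpha<2$, and $\mu$ satisfies \eqref{muas}; the second and third pieces are bounded uniformly in $R$ directly by \eqref{muasFrac}. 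Hence $\sup_{R>1}|\Levy^\mu[\psi_R](0)|<\infty$.

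Combining these two observations with the uniform-in-$R$ pointwise consistency hypothesis gives
\[
\sup_{0<h<1}\!\!\left(R^{\alpha-2}\!\sum_{1<|z_\gamma|\leq R}\!\!|z_\gamma|^2\omega_{\gamma,h}+R^\alpha\!\!\sum_{|z_\gamma|>R}\!\omega_{\gamma,h}\right)\leq \sup_{R>1}|\Levy^\mu[\psi_R](0)|+\sup_{R>1}\sup_{0<h<1}|(\Levy^{\nu_h}-\Levy^\mu)[\psi_R](0)|,
\]
and taking the supremum over $R>1$ on the left then yields \eqref{DmuasFrac}. The main subtlety to watch is making sure the scaled test function keeps $\|\psi_R\|_{L^\infty(\R^N)}=R^\alpha$ (so that the tail contribution of $\phi$ is truly controlled by $R^\alpha$), and that the consistency $\Levy^h[\psi_R]\to\Operator[\psi_R]$ at $x=0$ really is uniform in the rescaling parameter $R$; this is exactly the content of the ``uniformly in $R$'' clause in the hypothesis and is the only place where a genuine nontrivial assumption beyond \eqref{muasFrac} enters.
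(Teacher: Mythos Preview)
Your proposal is correct and follows essentially the same route as the paper's proof: both obtain the lower bound $\Levy^{\nu_h}[\psi_R](0)\geq R^{\alpha-2}\sum_{1<|z_\gamma|\leq R}|z_\gamma|^2\omega_{\gamma,h}+R^\alpha\sum_{|z_\gamma|>R}\omega_{\gamma,h}$ from the explicit form of $\psi_R$ and the nonnegativity of $\phi$, then bound $|\Levy^\mu[\psi_R](0)|$ uniformly in $R$ via the same three-region split and \eqref{muasFrac}, and conclude using the uniform-in-$R$ consistency. Your write-up is in fact slightly more explicit than the paper's in justifying the tail inequality $\psi_R(z)-\psi_R(0)\geq R^\alpha$ for $|z|>R$.
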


\begin{proof}
Again, we follow the proof of Proposition 3.2 in \cite{DTEnJa18b}. We obtain
\begin{equation*}
\begin{split}
\Levy^{\nu_h}[\psi](0)&\geq R^{\alpha-2}\int_{|z|\leq 1}|z|^2\dd\nu_h(z)+R^{\alpha-2}\int_{1<|z|\leq R}|z|^2\dd\nu_h(z)-\psi(0)\int_{|z|>R}\dd \nu_h(z)\\
&\geq R^{\alpha-2}\int_{1<|z|\leq R}|z|^2\dd\nu_h(z)+R^\alpha\int_{|z|>R}\dd \nu_h(z).
\end{split}
\end{equation*}
Hence, since $R>1$ and $\alpha\in(0,2)$, we have $R^{\alpha-2}\leq 1$ and
\begin{equation*}
\begin{split}
&\sup_{0<h<1}\bigg\{R^{\alpha-2}\int_{1<|z|\leq R}|z|^2\dd\nu_h(z)+R^\alpha\int_{|z|>R}\dd \nu_h(z)\bigg\}\\
&\leq \sup_{0<h<1}\Levy^{\nu_h}[\psi](0)\leq |\Levy^\mu[\psi](0)|+\sup_{0<h<1}|(\Levy^{\nu_h}-\Levy^\mu)[\psi](0)|\\
&\leq \bigg(R^{\alpha-2}\int_{|z|\leq 1}|z|^2\dd \mu(z)+R^{\alpha-2}\int_{1<|z|\leq R}|z|^2\dd\mu(z)+\int_{|z|>R}|\psi(z)-\psi(0)|\dd \mu(z)\bigg)\\
&\quad+\sup_{0<h<1}|(\Levy^{\nu_h}-\Levy^\mu)[\psi](0)|\\
&\leq \bigg(\int_{|z|\leq 1}|z|^2\dd \mu(z)+R^{\alpha-2}\int_{1<|z|\leq R}|z|^2\dd\mu(z)+2R^\alpha\int_{|z|>R}\dd \mu(z)\bigg)\\
&\quad+\sup_{0<h<1}|(\Levy^{\nu_h}-\Levy^\mu)[\psi](0)|,
\end{split}
\end{equation*}
which is finite due to the pointwise convergence and the assumption \eqref{muasFrac}. 
\end{proof}









\begin{thebibliography}{10}

\bibitem{App09}
D.~Applebaum.
\newblock {\em L\'evy processes and stochastic calculus}, volume 116 of {\em
  Cambridge Studies in Advanced Mathematics}.
\newblock Cambridge University Press, Cambridge, second edition, 2009.

\bibitem{BePa61}
A.~Benedek and R.~Panzone.
\newblock The spaces {$L^{p}$}, with mixed norm.
\newblock {\em Duke Math. J.}, 28:301--324, 1961.

\bibitem{BeCr81a}
P.~B\'enilan and M.~G. Crandall.
\newblock The continuous dependence on {$\varphi $}\ of solutions of
  {$u_{t}-\Delta \varphi (u)=0$}.
\newblock {\em Indiana Univ. Math. J.}, 30(2):161--177, 1981.

\bibitem{Ber96}
J.~Bertoin.
\newblock {\em L\'evy processes}, volume 121 of {\em Cambridge Tracts in
  Mathematics}.
\newblock Cambridge University Press, Cambridge, 1996.

\bibitem{Bou66}
N.~Bourbaki.
\newblock {\em Elements of mathematics. {G}eneral topology. {P}art 2}.
\newblock Hermann, Paris; Addison-Wesley Publishing Co., Reading,
  Mass.-London-Don Mills, Ont., 1966.

\bibitem{BrChQu12}
C.~Br\"andle, E.~Chasseigne, and F.~Quir\'os.
\newblock Phase transitions with midrange interactions: a nonlocal {S}tefan
  model.
\newblock {\em SIAM J. Math. Anal.}, 44(4):3071--3100, 2012.

\bibitem{B-OKaGrRoBu14}
A.~Bueno-Orovio, D.~Kay, V.~Grau, B.~Rodriguez, and K.~Burrage.
\newblock Fractional diffusion models of cardiac electrical propagation: role
  of structural heterogeneity in dispersion of repolarization.
\newblock {\em J. Royal Soc. Interface}, 11(97), 2014.

\bibitem{BuCoBuTo99}
M.~C. Bustos, F.~Concha, R.~B\"urger, and E.~M. Tory.
\newblock {\em Sedimentation and thickening}, volume~8 of {\em Mathematical
  Modelling: Theory and Applications}.
\newblock Kluwer Academic Publishers, Dordrecht, 1999.

\bibitem{CiJa11}
S.~Cifani and E.~R. Jakobsen.
\newblock Entropy solution theory for fractional degenerate
  convection-diffusion equations.
\newblock {\em Ann. Inst. H. Poincar\'e Anal. Non Lin\'eaire}, 28(3):413--441,
  2011.

\bibitem{CiJa14}
S.~Cifani and E.~R. Jakobsen.
\newblock On numerical methods and error estimates for degenerate fractional
  convection-diffusion equations.
\newblock {\em Numer. Math.}, 127(3):447--483, 2014.

\bibitem{CoTa04}
R.~Cont and P.~Tankov.
\newblock {\em Financial modelling with jump processes}.
\newblock Chapman \& Hall/CRC Financial Mathematics Series. Chapman \&
  Hall/CRC, Boca Raton, FL, 2004.

\bibitem{DPQuRoVa12}
A.~de~Pablo, F.~Quir{\'o}s, A.~Rodr{\'{\i}}guez, and J.~L. V{\'a}zquez.
\newblock A general fractional porous medium equation.
\newblock {\em Comm. Pure Appl. Math.}, 65(9):1242--1284, 2012.

\bibitem{DTEnJa17b}
F.~del Teso, J.~Endal, and E.~R. Jakobsen.
\newblock On distributional solutions of local and nonlocal problems of porous
  medium type.
\newblock {\em C. R. Math. Acad. Sci. Paris}, 355(11):1154--1160, 2017.

\bibitem{DTEnJa17a}
F.~del Teso, J.~Endal, and E.~R. Jakobsen.
\newblock Uniqueness and properties of distributional solutions of nonlocal
  equations of porous medium type.
\newblock {\em Adv. Math.}, 305:78--143, 2017.

\bibitem{DTEnJa18b}
F.~del Teso, J.~Endal, and E.~R. Jakobsen.
\newblock Robust numerical methods for nonlocal (and local) equations of porous
  medium type. {P}art {II}: {S}chemes and experiments.
\newblock {\em SIAM J. Numer. Anal.}, 56(6):3611--3647, 2018.

\bibitem{DTEnJa19}
F.~del Teso, J.~Endal, and E.~R. Jakobsen.
\newblock Robust numerical methods for nonlocal (and local) equations of porous
  medium type. {P}art {I}: {T}heory.
\newblock {\em SIAM J. Numer. Anal.}, 57(5):2266--2299, 2019.

\bibitem{DTEnVa20}
F.~del Teso, J.~Endal, and J.~L. V\'{a}zquez.
\newblock On the two-phase fractional {S}tefan problem.
\newblock {\em Adv. Nonlinear Stud.}, 20(2):437--458, 2020.

\bibitem{DTEnVa21}
F.~del Teso, J.~Endal, and J.~L. V\'{a}zquez.
\newblock The one-phase fractional {S}tefan problem.
\newblock {\em Math. Models Methods Appl. Sci.}, 31(1):83--131, 2021.

\bibitem{DBe02}
E.~DiBenedetto.
\newblock {\em Real analysis}.
\newblock Birkh\"auser Advanced Texts: Basler Lehrb\"ucher. [Birkh\"auser
  Advanced Texts: Basel Textbooks]. Birkh\"auser Boston, Inc., Boston, MA,
  2002.

\bibitem{Dro10}
J.~Droniou.
\newblock A numerical method for fractal conservation laws.
\newblock {\em Math. Comp.}, 79(269):95--124, 2010.

\bibitem{DrEy16}
J.~Droniou and R.~Eymard.
\newblock Uniform-in-time convergence of numerical methods for non-linear
  degenerate parabolic equations.
\newblock {\em Numer. Math.}, 132(4):721--766, 2016.

\bibitem{DrEyGaGuHe18}
J.~Droniou, R.~Eymard, T.~Gallou\"{e}t, C.~Guichard, and R.~Herbin.
\newblock {\em The gradient discretisation method}, volume~82 of {\em
  Math\'{e}matiques \& Applications (Berlin) [Mathematics \& Applications]}.
\newblock Springer, Cham, 2018.

\bibitem{DrJa14}
J.~Droniou and E.~R. Jakobsen.
\newblock A uniformly converging scheme for fractal conservation laws.
\newblock In {\em Finite volumes for complex applications {VII}. {M}ethods and
  theoretical aspects}, volume~77 of {\em Springer Proc. Math. Stat.}, pages
  237--245. Springer, Cham, 2014.

\bibitem{ErJa21}
O.~Ersland and E.~R. Jakobsen.
\newblock On fractional and nonlocal parabolic mean field games in the whole
  space.
\newblock {\em Journal of Differential Equations}, 301:428--470, 2021.

\bibitem{EsKa00}
M.~S. Espedal and K.~H. Karlsen.
\newblock Numerical solution of reservoir flow models based on large time step
  operator splitting algorithms.
\newblock In {\em Filtration in porous media and industrial application
  ({C}etraro, 1998)}, volume 1734 of {\em Lecture Notes in Math.}, pages 9--77.
  Springer, Berlin, 2000.

\bibitem{Eva10}
L.~C. Evans.
\newblock {\em Partial differential equations}, volume~19 of {\em Graduate
  Studies in Mathematics}.
\newblock American Mathematical Society, Providence, RI, second edition, 2010.

\bibitem{EvKa00}
S.~Evje and K.~H. Karlsen.
\newblock Monotone difference approximations of {BV} solutions to degenerate
  convection-diffusion equations.
\newblock {\em SIAM J. Numer. Anal.}, 37(6):1838--1860, 2000.

\bibitem{Fre01}
D.~H. Fremlin.
\newblock {\em Measure theory. {V}ol. 2}.
\newblock Torres Fremlin, Colchester, 2001.

\bibitem{H-OHo10}
H.~Hanche-Olsen and H.~Holden.
\newblock The {K}olmogorov-{R}iesz compactness theorem.
\newblock {\em Expo. Math.}, 28(4):385--394, 2010.

\bibitem{H-OHoMa17}
H.~Hanche-Olsen, H.~Holden, and E.~Malinnikova.
\newblock An improvement of the {K}olmogorov-{R}iesz compactness theorem.
\newblock To appear in \emph{Expo. Math.}, 2018.

\bibitem{HoRi02}
H.~Holden and N.~H. Risebro.
\newblock {\em Front tracking for hyperbolic conservation laws}, volume 152 of
  {\em Applied Mathematical Sciences}.
\newblock Springer-Verlag, New York, 2002.

\bibitem{IgIgSt-Du15}
L.~I. Ignat, T.~I. Ignat, and D.~Stancu-Dumitru.
\newblock A compactness tool for the analysis of nonlocal evolution equations.
\newblock {\em SIAM J. Math. Anal.}, 47(2):1330--1354, 2015.

\bibitem{KaVa88}
S.~Kamin and J.~L. V\'{a}zquez.
\newblock Fundamental solutions and asymptotic behaviour for the
  {$p$}-{L}aplacian equation.
\newblock {\em Rev. Mat. Iberoamericana}, 4(2):339--354, 1988.

\bibitem{KaRi01}
K.~H. Karlsen and N.~H. Risebro.
\newblock Convergence of finite difference schemes for viscous and inviscid
  conservation laws with rough coefficients.
\newblock {\em M2AN Math. Model. Numer. Anal.}, 35(2):239--269, 2001.

\bibitem{KaSt14} K.~H. Karlsen, N.~H. Risebro, E.~B. Storrøsten.
  \newblock $L^1$ error estimates for difference approximations of
  degenerate convection-diffusion equations.
  \newblock {\em Math. Comp.} 83(290):2717–2762, 2014. 
  
\bibitem{KaRiSt16} K.~H. Karlsen, N.~H. Risebro, E.~B. Storrøsten.
\newblock On the convergence rate of finite difference methods for degenerate convection-diffusion equations in several space dimensions. 
\newblock {\em ESAIM Math. Model. Numer. Anal.} 50(2):499–539, 2016.
  
\bibitem{Sch03}
W.~Schoutens.
\newblock {\em L\'evy Processes in Finance: Pricing Financial Derivatives}.
\newblock Wiley series in probability and statistics. Wiley, Chichester, first
  edition, 2003.

\bibitem{ShWe49}
C.~E. Shannon and W.~Weaver.
\newblock {\em The {M}athematical {T}heory of {C}ommunication}.
\newblock The University of Illinois Press, Urbana, Ill., 1949.

\bibitem{Sim87}
J.~Simon.
\newblock Compact sets in the space ${L}^p(0,{T};{B})$.
\newblock {\em Ann. Mat. Pura Appl. (4)}, 146(1):65--96, 1987.

\bibitem{Vaz07}
J.~L. V{\'a}zquez.
\newblock {\em The porous medium equation. {M}athematical theory}.
\newblock Oxford Mathematical Monographs. The Clarendon Press, Oxford
  University Press, Oxford, 2007.

\bibitem{Whi74}
G.~B. Whitham.
\newblock {\em Linear and nonlinear waves}.
\newblock Wiley-Interscience [John Wiley \& Sons], New York-London-Sydney,
  1974.

\bibitem{Woy01}
W.~A. Woyczy{\'n}ski.
\newblock L\'evy processes in the physical sciences.
\newblock In {\em L\'evy processes}, pages 241--266. Birkh\"auser Boston,
  Boston, MA, 2001.

\end{thebibliography}

\end{document}